\let\expandafter\oldproof\csname\string\proof\endcsname
\let\oldendproof\endproof
\renewenvironment{proof}[1][\proofname]{%
  \oldproof[\ttfamily \scshape \bf #1. ]%
}{\oldendproof}
\def\B{\mathbb{B}}
\def\R{{\rm I\!R}}
\def\oR{\overline{\R}}
\def\N{{\rm I\!N}}
\def\ox{\bar{x}}
\def\oy{\bar{y}}
\def\oz{\bar{z}}
\def\ov{\bar{v}}
\def\ou{\bar{u}}
\def \b{{\}_{k\in\N}}}
\def\xk{x^k}
\def\lmk{{\lambda^k}}
\def\what{\widehat}
\def\emp{\emptyset}
\def\tto{\rightrightarrows}
\def\prox{{\mbox{prox}\,}}
\def\Lm{{\Lambda}}
\def\tto{\rightrightarrows}
\def\sub{\partial}
\def\ra{\rangle}
\def\la{\langle}
\def\ve{\varepsilon}
\def\lm{\lambda}
\def\olm{{\bar\lambda}}
\def\gg{\gamma}
 \def\para{{\rm par}\,}
\def\dd{\delta}
\def\al{\alpha}
\def\Th{\Theta}
\def\ph{\varphi}
\def\vth{\vartheta}
\def\toset_#1{\xrightarrow{#1}}
\DeclareMathOperator*{\mini}{minimize\;}
\DeclareMathOperator*{\argmin}{argmin}
\def\d{{\rm d}}
\def\dist{{\rm dist}}
\def\spann{{\rm span}\,}
\def\rge{{\rm rge\,}}
\def\ri{{\rm ri}\,}
\def\inte{{\rm int}\,}
\def\gph{{\rm gph}\,}
\def\epi{{\rm epi}\,}
\def\dim{{\rm dim}\,}
\def\dom{{\rm dom}\,}
\def\ker{{\rm ker}\,}
\def\Kg{{K_g(\oz, \olm)}}
\def\K{{\overline{  K}}}
\def\rN{{\what{N}}}
\def\rs{{\what{\partial}}}
\def\rt{{\what{T}}}
\begin{document}
\vspace*{0.5in}
\begin{center}
{\bf A CHAIN RULE FOR  STRICT TWICE EPI-DIFFERENTIABILITY   AND ITS APPLICATIONS}\\[2ex]
NGUYEN T. V. HANG\footnote{  Institute of Mathematics, Vietnam Academy of Science and Technology, Hanoi, Vietnam (ntvhang@math.ac.vn). Research of this author is partially supported by Vietnam Academy of Science and Technology under the grant CTTH00.01/22-23.} and  M. EBRAHIM SARABI\footnote{Department of Mathematics, Miami University, Oxford, OH 45065, USA (sarabim@miamioh.edu). Research of this    author is partially supported by the U.S. National Science Foundation  under the grant DMS 2108546.}
\end{center}
\vspace*{0.05in}

\small{\bf Abstract.}  The presence of second-order smoothness for objective functions of optimization problems can provide valuable information about their stability properties 
and  help us design efficient numerical algorithms for solving these problems. Such second-order information, however, cannot  be expected in various 
 constrained  and composite optimization problems since we often have to express their objective functions in terms of  extended-real-valued functions for which  the classical second derivative may not exist. 
One powerful geometrical tool to use for dealing with such functions  is the concept of  twice epi-differentiability. In this  paper, we are going to study a stronger version of this concept, called strict twice epi-differentiability.
 We characterize this concept for certain composite functions and use it to establish the equivalence of metric regularity and strong metric regularity for a class of generalized 
   equations at their nondegenerate solutions. Finally, we present a characterization of continuous differentiability of the proximal mapping of our composite functions. 
 \\[1ex]
{\bf Keywords.}  strict twice epi-differentiability,   regularity of subdifferentials,  strict proto-differentiability,  generalized equations, nondegenerate solutions, proximal mappings, chain rule. \\[1ex]
{\bf Mathematics Subject Classification (2000)} 90C31, 65K99, 49J52, 49J53

\newtheorem{Theorem}{Theorem}[section]
\newtheorem{Proposition}[Theorem]{Proposition}
\newtheorem{Lemma}[Theorem]{Lemma}
\newtheorem{Corollary}[Theorem]{Corollary}
\newtheorem{Definition}[Theorem]{Definition}
\numberwithin{equation}{section}

\theoremstyle{definition}
\newtheorem{Example}[Theorem]{Example}
\newtheorem{Remark}[Theorem]{Remark}

\renewcommand{\thefootnote}{\fnsymbol{footnote}}

\normalsize

\section{Introduction}

Twice epi-differentiability of extended-real-valued functions, introduced by Rockafellar in \cite{r89},
is a geometric concept that provides approximation of epigraphs and so differs from the classical quadratic approximation, obtained from the classical second derivative of functions. 
Such a geometrical approximation opens the door to deal with important classes of extended-real-valued functions, appearing in optimization problems.
Despite the fact that this concept was introduced in the late 1980s, major progress has been achieved recently in \cite{ms20,mms0} in understanding  its various properties.
{ Note that this concept plays via the notion of the second subderivative an important role in obtaining second-order optimality conditions for different classes of optimization problems; see \cite{bm, ms20, mms0,rw}.}
 It has also been shown recently that 
it can be an important tool for conducting convergence analysis of important numerical algorithms including the Newton method \cite{ms21} and the augmented Lagrangian method \cite{hs,r22}.
{\color{blue}
}
This paper aims to study  a stronger version of this concept, called {\em strict twice epi-differentiability},  for an extended-real-valued  function  $\ph:\R^n\to \oR$,  finite at $\ox\in \R^n$,  that enjoys the composite representation 
\begin{equation}\label{CF}
\ph (x)= (g \circ \Phi)(x)\quad  \mbox{for all}\;\;  x\in {  O}
\end{equation}
around $\ox$, where ${  O}$ is a neighborhood of $\ox$ and where  $\Phi:\R^n \to \R^m$ is twice continuously differentiable around $\ox$ and $g: \R^m \to \oR:=[-\infty,\infty]$ is  a polyhedral function.
Recall that a proper function $g: \R^m \to \oR$ is called polyhedral if its epigraph, namely the set $\epi g=\{(z,\al)\in \R^m\times \R|\; g(z)\le \al\}$, is  a polyhedral convex set. 
Traditionally,  when a function, satisfying the composite representation \eqref{CF},   enjoys a certain constraint qualification, it belongs to an important class of functions, called {\em fully amenable} (cf. \cite[Definition~10.23(a)]{rw}).
We, however, do not use this terminology since most of the results in this paper require a different constraint qualification than the one in \cite[Definition~10.23(a)]{rw}. 
Note also that the objective functions of  a number of  important classes of constrained and composite optimization problems, including 
classical nonlinear programming problems, constrained and unconstrained minimax problems, can be expressed in the composite form \eqref{CF}; see Example~\ref{point} for more details. 
This can pave the way to study various stability properties of these optimization problems and their KKT systems by exploring variational properties of the composite form in  \eqref{CF}.

{ Strict twice epi-differentiability  was first introduced in \cite{pr} and  explored further    in the context of  unconstrained optimization problems in \cite{pr2}, where it was  showed that 
it can characterize continuous differentiability of proximal mappings of prox-regular functions under certain assumptions. It was  studied then in  \cite{pr3} for nonlinear programming and minimax problems. 
Recently, the authors began exploring this concept for polyhedral functions in \cite{hjs} and showed that strict twice epi-differentiability in that setting amounts to a relative interior condition; see \cite[Theorem~4.3]{hjs}. 
In the context of optimization algorithms, strict twice epi-differentiability was utilized recently  in \cite{stp17,stp18} in order to design a linesearch algorithm for minimizing the sum of two nonconvex functions.}

One may wonder whether  strict twice epi-differentiability leads us to  stronger properties in comparison to those stemming from  its weaker version, namely twice epi-differentiability.   
As shown in Section~\ref{sec3}, this stronger version of twice epi-differentiability render the subgradient mapping of $\ph$ in \eqref{CF} to satisfy an interesting regularity property, called {\em strict proto-differentiability}.
Such a property, as demonstrated in Section~\ref{sec04}, has a number of important implications. First, it allows us to characterize the regularity of the graph of the subgradient mapping of $\ph$ in \eqref{CF}, which is 
a highly nonconvex set. Second, we are able to achieve  the equivalence of metric regularity and strong metric regularity for an important class of generalized equations at their nondegenerate solutions,
 which leads us to an extension of a similar result by Dontchev and Rockafellar in \cite{dr96}. 
 Using the latter  equivalence, we obtain a simple but useful characterization of  continuous differentiability of the proximal mapping of the composite function $\ph$
 in \eqref{CF}. Note that  while $\ph$ may not be a convex function,   it is well-known that its proximal mapping   is locally single-valued (cf. \cite[Proposition~13.37]{rw}) and directionally differentiable.
 Using the concept of strict proto-differentiability, we are going to show that one should expect a stronger differentiability property of the proximal mapping, namely continuous differentiability, when   certain subgradients of $\ph$ are chosen. 
{  Some of these properties, including continuous differentiability of proximal mappings, have been proven for  the ${\cal C}^2$-partly smooth functions defined by Lewis; see  \cite[Definition~4]{dhm} for the definition of such a function.
This suggests that the ${\cal C}^2$-partly smooth functions are likely strict twice epi-differentiable. 
 In fact, we will show in the follow-up paper \cite{hs23} that the class of strict  twice epi-differentable functions encompasses the ${\cal C}^2$-partly smooth functions. This opens a new door to study 
 the latter class of functions using tools of second-order generalized differentiation. }
 
{  Our primary goal is to extend the  characterization of strict twice epi-differentiability in \cite{hjs} to the composite function in \eqref{CF} and present a systematic approach 
for understanding this concept and its consequences in stability properties of generalized equations. To achieve this goal, one requires certain chain rules for different 
second-order generalized differentiation notions, utilized in \cite{hjs}. We begin by establishing 
 a new chain rule for  the strict second subderivative of \eqref{CF}. As shown in Remark~\ref{tilts}, such a chain rule can be exploited to study other stability properties of \eqref{CF}, a task we don't pursue in this paper. 
 We also justify a new chain rule for strict twice epi-differentiability of the composite function in \eqref{CF}. 
 We should add here that  the results in \cite{hjs} significantly benefit 
from   polyhedrality, assumed therein. Indeed, most of the proofs in \cite{hjs} relies one way or another upon the reduction lemma 
for polyhedral functions (cf. \cite[Theorem~3.1]{hjs}), which is not available for the composite form in \eqref{CF}. Using a different approach from \cite{hjs}, we also 
 establish the equivalence of metric regularity and strong metric regularity 
 for the class of generalized equations (see the generalized equation in \eqref{GE}) at their nondegenerate solutions, which allows us to characterize  continuous differentiability of the proximal mapping of \eqref{CF}. 
}
 
In Section~\ref{sec2}, we first recall important concepts, used in this paper, and then establish some elementary properties related to a constraint qualification that 
is going to be assumed in most of the results of this paper. Among these properties are certain metric estimates that allow us to achieve a chain rule for strict 
twice epi-differentiability. Section~\ref{sec3} begins with the concept of the strict second subderivative for which we will establish a chain rule. Using this result 
together with a chain rule for epi-convergence of functions, we achieve a characterization of strict twice epi-differentiability of $\ph$ in \eqref{CF} when an appropriate 
constraint qualification is assumed.  In Section~\ref{sec04}, we show that metric regularity and strong metric regularity are equivalent for a class of generalized equations at
their nondegenerate solutions. This equivalence is utilized next to obtain a characterization of continuous differentiability of the proximal mapping of the composite function in \eqref{CF}
as well as twice continuous differentiability of its Moreau envelope.

\section{Notation and Preliminary Results}\label{sec2}
In this paper,    we 
 denote by $\B$ the closed unit ball in the space in question and by $\B_r(x):=x+r\B$ the closed ball centered at $x$ with radius $r>0$. 
 Given a nonempty set $C\subset\R^n$, the symbols $C^*$, $\ri C$,  and $\spann C$ signify its polar cone, its relative interior,  and the linear space generated by  $C$, respectively. 
 For any set $C$ in $\R^n$, its indicator function $\dd_C$ is defined by $\dd_C(x)=0$ for $x\in C$ and $\dd_C(x)=\infty$ otherwise. We denote
 by $P_C$ the projection mapping onto $C$ and  by $\dist(x,C)$  the distance from $x\in \R^n$ to the set $C$.
 For a vector $w\in \R^n$, the subspace $\{tw |\, t\in \R\}$ is denoted by $[w]$. The domain and range of a set-valued mapping $F:\R^n\tto\R^m$ are defined, respectively, by  
$ \dom F= \{x\in\R^n\big|\;F(x)\ne\emp \}$ and $\rge F=\{u\in \R^m|\; \exists \,  w\in \R^n\;\,\mbox{with}\;\; u\in  F(w) \}$.    

Let $\{C^t\}_{t>0}$ be a parameterized family of sets in $\R^d$. Its inner and outer limit sets are defined, respectively,  by 
\begin{align*}
\liminf_{t\searrow 0} C^t&= \big\{x\in \R^d|\; \forall \; t_k \searrow 0 \;\exists \; x^{t_k}\to x \;\;\mbox{with}\;\; x^{t_k}\in C^{t_k}\; \; \mbox{for}\; \;  k\;\; \mbox{sufficiently large}\big\},\\
\limsup_{t\searrow 0} C^t&= \big\{x\in \R^d|\; \exists \; t_k \searrow 0 \;\exists\;   \; x^{t_k}\to x \;\;\mbox{with}\;\; x^{t_k}\in C^{t_k}\big\};
\end{align*}
see  \cite[Definition~4.1]{rw}. 
The limit set of $\{C^t\}_{t>0}$ exists if  $\liminf_{t\searrow 0} C^t=\limsup_{t\searrow 0} C^t =:C$,  written as $C^t \to C$ when $t\searrow 0$. 
 A sequence $\{f^k\b$ of functions $f^k:\R^n\to \oR$ is said to {\em epi-converge} to a function $f:\R^n\to \oR$ if we have $\epi f^k\to \epi f$ as $k\to \infty$;
 see \cite[Definition~7.1]{rw} for more details on the epi-convergence of 
a sequence of extended-real-valued functions. We denote by $f^k\xrightarrow{e} f$ the  epi-convergence of  $\{f^k\b$ to $f$.

Given a nonempty set $\Omega\subset\R^n$ with $\ox\in \Omega$, the tangent cone to $\Omega$ at $\ox$, denoted $T_\Omega(\ox)$,  is defined  by
\begin{equation*}\label{tan1}
T_\Omega(\ox) = \limsup_{t\searrow 0} \frac{\Omega - \ox}{t}.
\end{equation*}
The  regular/Fr\'{e}chet normal cone $\rN_\Omega(\ox)$ to $\Omega$ at $\ox$ is defined by
 $\rN_\Omega(\ox) = T_\Omega(\ox)^*$. For $x\notin \Omega$, we set $\rN_\Omega(x)=\emptyset$. 
 The limiting/Mordukhovich normal cone $N_\Omega(\ox)$ to $\Omega$ at $\ox$ is
 the collection of all vectors $\ov\in \R^n$ for which there exist sequences  $\{x^k\b$ and  $\{v^k\b$ with $v^k\in \rN_\Omega( x^k)$ such that 
$(x^k,v^k)\to (\ox,\ov)$. When $\Omega$ is convex, both normal cones boil down to that of convex analysis.  
Given a function $f:\R^n \to \oR$ and a point $\ox\in\R^n$ with $f(\ox)$ finite, the subderivative function $\d f(\ox)\colon\R^n\to\oR$ is defined by
\begin{equation*}\label{fsud}
\d f(\ox)(w)=\liminf_{\substack{
   t\searrow 0 \\
  w'\to w
  }} {\frac{f(\ox+tw')-f(\ox)}{t}}.
\end{equation*}
A vector $v\in \R^n$ is called a subgradient of $f$ at $\ox$ if $(v,-1)\in N_{\epi f}(\ox,f(\ox))$. The set of all subgradients of $f$ at $\ox$
is    denoted by $\sub f(\ox)$. Replacing the limiting normal cone with $\rN_{\epi f}(\ox,f(\ox))$ in the definition of $\sub f(\ox)$ gives us $\rs f(\ox)$, which is called the regular subdifferential 
of $f$ at $\ox$.  
The critical cone of $f$ at $\ox$ for $\bar v$ with $\bar v\in   \sub f(\ox)$ is defined by 
\begin{equation*}\label{cricone}
{K_f}(\ox,\bar v)=\big\{w\in \R^n\,\big|\,\la\bar v,w\ra=\d f(\ox)(w)\big\}.
\end{equation*}
When $f=\dd_\Omega$, where $\Omega$ is a nonempty subset of $\R^n$, the critical cone of $\dd_\Omega$ at $\ox$ for $\ov$ is denoted by $K_\Omega(\ox,\ov)$. In this case, the above definition of the critical cone of a function 
boils down to  the well known concept of critical cone to a set (see \cite[page~109]{DoR14}), namely $K_\Omega(\ox,\ov)=T_\Omega(\ox)\cap [\ov]^\perp$ because of  $\d \dd_\Omega(\ox)=\dd_{T_\Omega(\ox)}$.
 If $f$ is convex and $\partial f(\ox) \neq \emptyset$, then $\d f(\ox)$ is the support function of $\partial f(\ox)$ (cf. \cite[Theorem~8.30]{rw}) and the critical cone $K_f(\ox, \ov)$ can be equivalently described by
\begin{equation}\label{crif}
K_f(\ox, \ov) = N_{\partial f(\ox)}(\ov).
\end{equation}


Given a set-valued mapping $F:\R^n\tto \R^m$ and $(\ox,\oy)\in \gph F$, the graphical derivative of $F$ at $\ox$ for $\oy$, denoted by  $DF(\ox,\oy)$,  is a set-valued mapping
defined by 
\begin{equation}\label{proto}
\gph DF(\ox, \oy)=T_{\gph F}(\ox,\oy) = \limsup_{t\searrow 0}\frac{\gph F-(\ox,\oy)}{t}.
\end{equation}
 The coderivative of $F$ at   $\ox$ for $\oy$, denoted by  $D^* F(\ox,\oy) $, is 
defined via the relationship 
$$
w\in D^* F(\ox,\oy)(u)\iff (w,-u)\in N_{\gph F}(\ox,\oy).
$$
It is known (cf. \cite[Theorem~3.5(a)]{hjs} and \cite[Theorem~1]{ms16}, respectively) that   for a polyhedral function $g$ and $(\oz,\olm)\in \gph \sub g$, we always have 
\begin{equation}\label{cal2d}
\begin{cases}
D(\sub g)(\oz,\olm)(w)=N_{\Kg}(w)=\Kg^*\cap [w]^\perp\quad \mbox{for all}\;\; w\in \Kg,\\
  D^*(\sub g)(\oz,\olm)(0)=\Kg^*-\Kg^*=\spann\big\{ \Kg^*\big\}.
  \end{cases}
\end{equation}
Also, it follows from   \cite[Theorem~3.5(a)]{hjs} and \cite[Theorem~3.6(b)]{hjs}, respectively,  that 
\begin{equation}\label{domgd}
\begin{cases}
\dom D(\sub g)(\oz, \olm)=K_g(\oz, \olm)\quad \mbox{and}\\
 \dom D^*(\sub g)(\oz, \olm)=K_g(\oz, \olm)-K_g (\oz, \olm)=\spann\big\{\Kg\big\}.
 \end{cases}
\end{equation}

Take the  function $\ph$ from \eqref{CF} and   define the  Lagrange multiplier mapping $\Lm:\R^n\times \R^n\tto \R^m$  by  
\begin{equation}\label{laset}
\Lambda(x,v)=\big\{\lm\in\R^m\;\big|\;   \nabla \Phi(x)^*\lm=v,\;\lm\in \sub g(\Phi(x))\big\},\quad (x,v)\in\R^n\times \R^n.
\end{equation}
In the presence of an appropriate constraint qualification, we can ensure that $\Lambda(x,v)\neq \emptyset $ when   $(x,v)\in \gph \sub \ph$.
We are not concerned, however,  in this paper about what the weakest constraint qualification is to achieve this goal. 
To proceed, pick $\ov\in \sub \ph(\ox)$ and  suppose that $\olm\in \Lm(\ox,\ov)$. In what follows, we say that   the {\em second-order qualification condition} (SOQC)  is satisfied  at $(\ox,\olm)$ for the composite form \eqref{CF} if 
the condition  
\begin{equation}\label{soqc}
 D^*(\sub g)\big(\Phi(\ox),\olm\big)(0)\cap  \ker \nabla \Phi(\ox)^*=\{0\}
\end{equation}
holds. This condition has been often used in second-order variational analysis to obtain   second-order chain rules for important classes of functions including fully amenable functions; see \cite{mr,ms16}. 
While $D^*(\sub g)\big(\Phi(\ox),\olm\big)(0)$ was already calculated in \eqref{cal2d}, a more useful representation of it can be found in  \cite[Theorem~3.1(i)-(ii)]{ms16}. These results tell us that  the latter can be equivalently calculated as
\begin{equation}\label{code2}
D^*(\sub g)\big(\Phi(\ox),\olm\big)(0)=\para\{\sub g(\Phi(\ox))\},
\end{equation}
  where $\para \{\sub g(\Phi(\ox))\}$ stands for  the linear subspace of $\R^m$ parallel to the affine hull of $ \sub g(\Phi(\ox))$.
This shows that the SOQC \eqref{soqc} is equivalent to the transversality condition in the sense of \cite[Definition~3.4]{be}. Moreover, using \eqref{code2}, one can see that the SOQC is equivalent 
to the nondegeneracy condition for the composite form \eqref{CF}, defined in \cite[Definition~5.3.1]{mi}. To elaborate more on the SOQC \eqref{soqc}, we consider two special cases of 
the polyhedral function $g$ and then show that the latter condition, indeed, boils down to well known conditions in optimization.

\begin{Example} \label{point} Suppose that $g$  and $\Phi$ are taken from \eqref{CF} and  $\Phi=(\ph_1,\ldots,\ph_m)$ with $\ph_i:\R^n\to \R$ for all $i=1,\ldots,m$.
We are going to discuss the SOQC \eqref{soqc} for the following instances of the polyhedral function $g$:
\begin{itemize}[noitemsep,topsep=2pt]
\item [ \rm {(a)}]   Suppose that $g(z)=\max\{z_1,\ldots,z_m\}$ with $z=(z_1,\ldots,z_m)\in \R^m$, $\oz=\Phi(\ox)$. 
Assume without loss of generality that $\oz_1=\ldots=\oz_m$, where $\oz=(\oz_1,\ldots,\oz_m)$. 
This selection of $g$
enables us to cover a minimax problem.
In this case, it is not hard to see that  the SOQC \eqref{soqc} amounts to saying that 
the vectors   $\nabla\ph_1(\ox), \ldots, \nabla \ph_m(\ox)$  are affinely independent; see \cite[Proposition~3]{ms16} and its proof for more detail. Recall that a set of vectors $w_0,w_1,\ldots,w_m$ is affinely independent if the vectors $ w_1-w_0,\ldots,w_m-w_0$
are linearly independent. 


\item [ \rm {(b)}]  Suppose that  $g=\dd_C$,  where $C=\{0\}^{s}\times \R^{m-s}_-$ with $0\le s\le m$. This selection of $g$ allows us to
  cover both equality and inequality constraints appearing in classical nonlinear programming problems. 
 In this case, it is not hard to see that the SOQC \eqref{soqc} reduces to the classical linear independence constraint qualification (LICQ);
see the discussion after \cite[Definition~6.1]{msr} for a proof of this fact.
\end{itemize}
\end{Example}

We proceed with some direct consequences of the SOQC \eqref{soqc} for the composite form \eqref{CF}, which play an important role for our developments in the next section. 
 
\begin{Proposition} \label{soqcg}Assume that $\ph:\R^n\to \oR$ has the representation \eqref{CF} around $\ox\in \R^n$, $\ov\in \sub \ph(\ox)$, and that $\olm\in \Lm(\ox,\ov)$. If the SOQC \eqref{soqc} is satisfied  at $(\ox,\olm)$, then 
 the following properties hold.
\begin{itemize}[noitemsep,topsep=2pt]
\item [ \rm {(a)}] The basic constraint qualification 
\begin{equation}\label{bcq}
N_{\dom g}(\Phi(x))  \cap  \ker \nabla \Phi(x)^*=\{0\}
\end{equation}
holds for any $x$ close to $\ox$ with $\Phi(x)\in \dom g$. 
\item [ \rm {(b)}]   There exist a constant $\ell\ge 0$ and  a neighborhood $U$ of $(\ox,\ov)$ such that for any $(x,v)\in U\cap \gph \sub \ph$, 
 the Lagrange multiplier set $\Lm(x,v)$ is a singleton and the estimate 
\begin{equation}\label{mslag}
\|\lm-\olm\|\le \ell \big(\|x-\ox\|+\|v-\ov\|\big)
\end{equation}
holds, where $\lm $ is the unique Lagrange multiplier in $\Lm(x,v)$. 
\end{itemize}
\end{Proposition}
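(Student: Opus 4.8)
\medskip
\noindent\textbf{Proof plan.} The plan is to reduce both assertions to a single robustness statement about the linear subspace $W:=D^*(\sub g)(\Phi(\ox),\olm)(0)$. First I would set $\oz:=\Phi(\ox)$ and use \eqref{code2} to identify $W$ with $\para\{\sub g(\oz)\}$, the subspace parallel to the affine hull of $\sub g(\oz)$; then the SOQC \eqref{soqc} says precisely that $\nabla\Phi(\ox)^*$ is injective on the finite-dimensional subspace $W$. The first genuine step is to upgrade this pointwise injectivity to a uniform one: since $\Phi$ is $C^2$, the map $\nabla\Phi$ is Lipschitz around $\ox$, and a routine perturbation estimate on $W$ produces a constant $\kappa>0$ and a neighborhood $O'$ of $\ox$ with $\|\nabla\Phi(x)^*w\|\ge\kappa\|w\|$ for all $w\in W$ and $x\in O'$; in particular $W\cap\ker\nabla\Phi(x)^*=\{0\}$ for every $x\in O'$.

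Next I would record how $\sub g$ behaves near $\oz$. Because the active index sets $I(\cdot)$ and $J(\cdot)$ from \eqref{indset4} can only shrink under small perturbations, the standard formula $\sub g(z)=\co\{a^j\,|\,j\in J(z)\}+\cone\{b^i\,|\,i\in I(z)\}$ for the polyhedral function $g$ in \eqref{polyfunc} yields $\sub g(z)\subseteq\sub g(\oz)$ and $N_{\dom g}(z)\subseteq N_{\dom g}(\oz)$ for all $z$ close to $\oz$. Combining this with continuity of $\Phi$ (and noting $\Phi(x)\in\dom g$ on $\gph\sub\ph$), for $x$ near $\ox$ one gets $\sub g(\Phi(x))-\olm\subseteq\sub g(\oz)-\sub g(\oz)\subseteq W$ (using $\olm\in\sub g(\oz)$) and $N_{\dom g}(\Phi(x))\subseteq N_{\dom g}(\oz)\subseteq W$. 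Assertion (a) then drops out at once: for such $x$ in $O'$,
\begin{equation*}
N_{\dom g}(\Phi(x))\cap\ker\nabla\Phi(x)^*\subseteq W\cap\ker\nabla\Phi(x)^*=\{0\}.
\end{equation*}

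For (b), part (a) makes the basic constraint qualification \eqref{bcq} hold throughout a neighborhood of $\ox$, so the subdifferential chain rule \cite[Theorem~10.6]{rw} — applicable since $g$ is convex, whence $\sub^\infty g=N_{\dom g}$ and equality holds — gives $\sub\ph(x)=\nabla\Phi(x)^*\sub g(\Phi(x))$ for $x$ near $\ox$, so $\Lm(x,v)\ne\emp$ whenever $(x,v)\in\gph\sub\ph$ is close to $(\ox,\ov)$. Single-valuedness is then immediate: if $\lm,\lm'\in\Lm(x,v)$, then $\lm-\lm'\in\sub g(\Phi(x))-\sub g(\Phi(x))\subseteq W$ while $\nabla\Phi(x)^*(\lm-\lm')=v-v=0$, so $\lm=\lm'$ by the first step. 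Finally, writing $\lm$ for the unique multiplier and using $\lm-\olm\in W$ together with the uniform lower bound and a local Lipschitz constant $L$ of $\nabla\Phi$, I would estimate
\begin{equation*}
\kappa\|\lm-\olm\|\le\|\nabla\Phi(x)^*(\lm-\olm)\|=\|(v-\ov)+(\nabla\Phi(\ox)^*-\nabla\Phi(x)^*)\olm\|\le\|v-\ov\|+L\|\olm\|\,\|x-\ox\|,
\end{equation*}
which is \eqref{mslag} with $\ell:=\kappa^{-1}\max\{1,L\|\olm\|\}$.

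The bookkeeping here is lengthy but shallow; the one place that requires genuine care is the first step — turning the pointwise transversality \eqref{soqc} into a neighborhood statement with a uniform modulus $\kappa$ — together with its companion fact that $\sub g(\Phi(x))-\olm$ and $N_{\dom g}(\Phi(x))$ stay inside the fixed subspace $W$ for all $x$ near $\ox$, since everything afterward (robustness of \eqref{bcq}, single-valuedness of $\Lm$, and the metric estimate \eqref{mslag}) rests on exactly those two uniform facts.
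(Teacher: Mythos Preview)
Your proof is correct and takes a genuinely different, more elementary route than the paper. The paper works abstractly through the graphical derivative and coderivative of $\sub g$: for (a) it passes through \eqref{cal2d} to obtain $N_{\dom g}(\Phi(\ox))\subset K_g(\Phi(\ox),\olm)^*$ and then asserts robustness; for (b) it first runs a contradiction argument to propagate the SOQC to nearby $(x,\lm)$, invokes the external result \cite[Proposition~3.10]{s20} to get uniqueness of multipliers, and then proves \eqref{mslag} by a second contradiction argument showing that any normalized limit direction of $\lm^k-\olm$ lands in $K_g(\Phi(\ox),\olm)^*$. You instead exploit the explicit polyhedral representation of $g$ directly: from the shrinking of the active index sets you get $\sub g(\Phi(x))\subset\sub g(\oz)$ and $N_{\dom g}(\Phi(x))\subset N_{\dom g}(\oz)\subset W$ in one stroke, and combine this with a uniform lower bound for $\nabla\Phi(x)^*$ on the fixed subspace $W$. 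This buys you a constructive proof with no external citations and an explicit constant $\ell=\kappa^{-1}\max\{1,L\|\olm\|\}$, whereas the paper's argument is phrased so that it would transfer more readily to settings where $g$ is not polyhedral but a SOQC-type condition on $D^*(\sub g)$ still holds, and as a byproduct establishes that the full SOQC (not just the injectivity on $W$) persists at all nearby $(\Phi(x),\lm)$, a fact used later in the paper.

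Two minor remarks. First, your citation ``\cite[Theorem~10.6]{rw}'' for the chain rule $\sub\ph(x)=\nabla\Phi(x)^*\sub g(\Phi(x))$ is off; the correct references in \cite{rw} are Exercise~10.25 or Theorem~10.49 (the paper itself uses \cite[Exercise~10.25(a)]{rw}). Second, for part (a) as stated the point $x$ need not satisfy $\Phi(x)\in\dom g$; your argument still goes through, since $N_{\dom g}(\Phi(x))=\emptyset$ in that case, but the parenthetical ``noting $\Phi(x)\in\dom g$ on $\gph\sub\ph$'' is slightly misplaced and should be removed or rephrased.
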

\begin{proof} 
By \eqref{cal2d}, we obtain from \eqref{soqc} that  
\begin{equation}\label{dc}
 D(\sub g)\big(\Phi(\ox),\olm\big)(0)\cap  \ker \nabla \Phi(\ox)^*=\{0\}.
\end{equation}
Appealing now to \cite[Proposition~3.10]{s20} illustrates that this property is equivalent to the condition  $\Lm(\ox,\ov)=\{\olm\}$.
It follows from \cite[Proposition~10.21]{rw} that  $\dom \d g(\Phi(\ox))=T_{\dom g}(\Phi(\ox))$, which allows us to conclude  the inclusion $K_g(\Phi(\ox),\olm)\subset T_{\dom g}(\Phi(\ox))$. This in turn gives us  $N_{\dom g}(\Phi(\ox))\subset K_g(\Phi(\ox),\olm)^*$. 
 Combining this, \eqref{cal2d}, and \eqref{dc} leads us to 
\begin{equation}\label{bcq1}
N_{\dom g}(\Phi(\ox))  \cap  \ker \nabla \Phi(\ox)^*=\{0\}.
\end{equation}
This tells us that \eqref{bcq} is satisfied  for any $x$ close to $\ox$ with $\Phi(x)\in \dom g$. 
If not, we find  sequences $x^k\to \ox$ with $\Phi(x^k)\in \dom g$ and  $\{\eta^k\b\subset \R^m$ with $\eta^k\neq 0$ such that 
$\eta^k\in N_{\dom g}(\Phi(x^k))  \cap  \ker \nabla \Phi(x^k)^*$. Assume without loss of generality that $\eta^k/\|\eta^k\|\to q$ with $q\in \R^m\setminus\{0\}$. 
Thus, we get $q\in N_{\dom g}(\Phi(\ox))  \cap  \ker \nabla \Phi(\ox)^*$, which contradicts \eqref{bcq1} and hence proves (a). 

{  Turning now to (b), we deduce from (a) that 
  there exists a $\dd>0$ such that for all $(x,v)\in \big(\gph \sub \ph\big)\cap \B_\dd(\ox,\ov)$, the Lagrange multiplier set $\Lm(x,v)$ is nonempty and uniformly bounded. 
  The nonemptiness results  from \eqref{bcq} and \cite[Example~10.8]{rw}. 
  To justify the claim about uniform boundedness, suppose by contradiction that there are sequences $(x^k,v^k)\to (\ox,\ov)$  and $\lm^k\in \Lm(x^k,v^k)$ such that $\|\lm^k\|\to \infty$. 
  Assume again without loss of generality that $\lm^k/\|\lm^k\|\to q$ with $q\in \R^m\setminus\{0\}$. By \eqref{laset}, we get 
  $q\in \ker \nabla\Phi(\ox)^* $ and $\lm^k \in \sub g(\Phi(x^k))$. The latter tells us that $(\lm^k,-1)\in N_{\epi g}\big(\Phi(x^k),g(\Phi(x^k))\big)$. 
  Since $g$ is continuous relative to its domain, dividing by $\|\lm^k\|$ and passing then to the limit in the latter inclusion bring us to $(q,0)\in N_{\epi f}\big(\Phi(\ox),g(\Phi(\ox))\big)$. 
  Appealing to the definition of the normal cone leads us to $q\in N_{\dom g}(\Phi(\ox))$, which coupled with $q\in \ker \nabla\Phi(\ox)^* $, violates \eqref{bcq1}
  and hence  proves the uniform boundedness of $\Lm(x,v)$. }
  
Next, we claim  that there exists $\dd'\in (0, \dd)$ such that for all $(x,v)\in \big(\gph \sub \ph\big)\cap \B_{\dd'}(\ox,\ov)$
and all $\lm\in \Lm(x,v)$, the condition 
\begin{equation*}\label{socq2}
D^*(\sub g)(\Phi(x), \lm)(0)\cap \ker \nabla \Phi(x)^*=\{0\}
\end{equation*}
holds. Suppose to the contrary that for any $k\in \N$, there are a pair  $(x^k,v^k)\in \gph \sub \ph$, converging to $(\ox,\ov)$, and 
$\lm^k\in \Lm(x^k,v^k)$ for which we can find $\eta^k\in \R^m\setminus\{0\}$ such that 
\begin{equation}\label{eq05}
\eta^k\in D^*(\sub g )(\Phi(x^k), \lm^k)(0)\cap \ker \nabla \Phi(x^k)^*.
\end{equation}
According to the discussion above,  the Lagrange multiplier sets $\Lm(x^k,v^k)$ are  uniformly bounded for sufficiently large $k$. 
Passing to a subsequence, if necessary, we can assume that the sequence $\{\lm^k\b$ converges to a vector in $\Lm(\ox,\ov)$.
Since  $\Lm(\ox,\ov)=\{\olm\}$,   we arrive at $\lm^k\to \olm$.
Passing to a subsequence again if necessary, we can assume that $\eta^k/\|\eta^k\|\to q$ with  $q\in \R^m\setminus\{0\}$.
Combining these and using \eqref{eq05} bring us to
$$
q\in D^*(\sub g)(\Phi(\ox), \olm)(0)\cap \ker \nabla \Phi(\ox)^*,
$$
which contradicts \eqref{soqc} since $q\neq 0$ and thus proves our claim. Similar to the argument presented above  for \eqref{dc}, 
we get for any $(x,v)\in \big(\gph \sub \ph\big)\cap \B_{\dd'}(\ox,\ov)$  
and any $\lm\in \Lm(x,v)$ that   the condition 
\begin{equation}\label{dc2} 
D(\sub g)(\Phi(x), \lm)(0)\cap \ker \nabla \Phi(x)^*=\{0\}
\end{equation}
is satisfied and that $\Lm(x,v)$ is a singleton. 
To justify the estimate \eqref{mslag}, suppose by contradiction that it fails. Thus, for any $k\in \N$, we find $(x^k,v^k)\in \gph \sub \ph$, converging to $(\ox,\ov)$, and 
$\lm^k\in \Lm(x^k,v^k)$ such that 
$$
\|\lm^k-\olm\|> k(\|x^k-\ox\|+\|v^k-\ov\|).
$$
Setting $t_k:=\|\lm^k-\olm\|$, we get $\|x^k-\ox\|=o(t_k)$ and $\|v^k-\ov\|=o(t_k)$. Similar to the argument utilized above, we can assume by passing to a subsequence if necessary that $\lm^k\to \olm$.
By the definitions of $\lm^k$ and $\olm$, we obtain 
$$
\nabla\Phi(\ox)^*\big(\frac{\lm^k-\olm}{t_k}\big)=\frac{1}{t_k}\big((\nabla\Phi(\ox)-\nabla\Phi(x^k))^*\lm^k+ v^k-\ov\big)=\frac{o(t_k)}{t_k}.
$$
Assume without loss of generality that $(\lm^k-\olm)/t_k\to q$ for some $q\in \R^m\setminus\{0\}$. The latter immediately implies that $q\in \ker\nabla \Phi(\ox)^*$. 
On the other hand, since $\sub g(\Phi(\ox))$ is a polyhedral convex set and since $\lm^k\in \sub g(\Phi(x^k))\subset \sub g(\Phi(\ox))$, it follows from \cite[Exercise~6.47]{rw} and \eqref{crif} that $$\lm^k-\olm\in T_{\sub g(\Phi(\ox))}(\olm)=(N_{\sub g(\Phi(\ox))}(\olm))^*=K_g(\Phi(\ox),\olm)^*$$
for all $k$ sufficiently large. By \eqref{cal2d}, we have $D(\sub g)\big(\Phi(\ox),\olm\big)(0)=K_g(\Phi(\ox),\olm)^*$.
Thus, we get  $q\in  D(\sub g)\big(\Phi(\ox),\olm\big)(0)$, a contradiction with \eqref{dc}. This proves (b) and completes the proof. 
  \end{proof}
  
 We continue with another consequence  of the SOQC \eqref{soqc}, important for the calculus of strict second subderivative in the next section.
 To this end, recall that a set-valued mapping   $F:\R^n \tto \R^m$ is said to be {\em metrically regular} at $\ox$ for $\oy\in F(\ox)$ if   there exist $\kappa \geq 0$  and  neighborhoods $U$ of $\ox$ and $V$ of $\oy$ such that the distance estimate
\begin{equation}\label{mr8}
\dist \big(x, F^{-1}(y)\big)\leq \kappa\, \dist\big(y, F(x)\big)
\end{equation}
holds for all $(x, y)\in U\times V$. When the estimate in \eqref{mr8} holds for any $(x,y)\in \R^n\times \R^m$, we say that $F$ is {globally} metrically regular. 
Recall that $F$ is said to be positively  homogeneous if $0\in F(0)$ and $F(tx)=tF(x)$ for any $t>0$ and $x\in \R^n$.
It is known (cf. \cite[Theorem~5.9(a)]{io}) that when $F$ is positively  homogeneous and metrically regular at $0\in \R^n$ for $0\in \R^m$, it is globally metrically regular. 

Given a closed set $C\subset \R^m$, a continuously differentiable function $h:\R^n\to \R^m$, and $\ox\in \R^n$ with $h(\ox)\in C$, it follows from \cite[Example~9.44]{rw} that the mapping $x\mapsto h(x)-C$
is metrically regular at $\ox$ for $0$ if and only if the condition 
$$
N_C(h(\ox))\cap \ker\nabla h(\ox)^*=\{0\}
$$
 is satisfied. In this case, the infimum of the constants $\kappa$ for which   metric regularity  of the mapping $x\mapsto h(x)-C$ holds at $\ox$ for $0$  can be calculated by 
 \begin{equation}\label{infk}
 \max\Big\{\frac{1}{\|\nabla h(\ox)^*y\|}\,\big |\; y\in N_C(h(\ox)), \, \|y\|=1\Big\}.
 \end{equation}
 
 \begin{Proposition} \label{mrcon}
 Assume that $\ph:\R^n\to \oR$ has the representation \eqref{CF} around $\ox\in \R^n$, $\ov\in \sub \ph(\ox)$, and that $\olm\in \Lm(\ox,\ov)$. If the SOQC \eqref{soqc} holds at $(\ox,\olm)$, then
 there is a  neighborhood $U$ of $(\ox,\ov)$ such that for any $(x,v)\in (\gph \sub \ph )\cap U$, the mapping $w\mapsto \nabla \Phi(x)w-K_g(\Phi(x),\lm)$ is globally metrically regular with constant $2\bar\gg$,
where $\lm$ is the unique Lagrange multiplier in $\Lm(x,v)$ and where 
\begin{equation}\label{numell}
\bar\gg=\max\Big\{\frac{1}{\|\nabla \Phi(\ox)^*y\|}\,\big|\; \|y\|=1, y\in D^*(\sub g)(\Phi(\ox),\olm)(0)\Big\}.
\end{equation}
 \end{Proposition}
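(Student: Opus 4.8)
The plan is to reduce the statement to the metric regularity criterion for linear constraint systems recalled just before it, and then to promote local regularity to the asserted global regularity by positive homogeneity; the only genuine work is a \emph{uniform} bound on the regularity constant. To start, Proposition~\ref{soqcg} provides, under the SOQC \eqref{soqc}, a neighborhood $U$ of $(\ox,\ov)$ on which, for every $(x,v)\in U\cap\gph\sub\ph$, the set $\Lm(x,v)$ is the singleton $\{\lm\}$ with $\lm\in\sub g(\Phi(x))$; in particular $(\Phi(x),\lm)\in\gph\sub g$ and the index sets \eqref{indset4} are defined at $\Phi(x)$. Fix such a pair and set $F_x(w):=\nabla\Phi(x)w-K_g(\Phi(x),\lm)$. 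Since $K_g(\Phi(x),\lm)$ is a polyhedral convex cone, $F_x$ is positively homogeneous, and the estimate \eqref{mr8} is invariant under $(w,y)\mapsto(tw,ty)$ for $t>0$; thus by \cite[Theorem~5.9(a)]{io} it suffices to show that $F_x$ is metrically regular at $0$ for $0$, the same constant then being valid globally. So the goal becomes: after shrinking $U$ independently of $(x,v)$, $F_x$ is metrically regular at $0$ for $0$ with constant $2\bar\gg$.

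For this I would regard $F_x$ as the mapping $w\mapsto h(w)-C$ with $h=\nabla\Phi(x)$ and $C=K_g(\Phi(x),\lm)$, so that $h(0)=0\in C$ and $N_C(0)=K_g(\Phi(x),\lm)^*$. By \cite[Example~9.44]{rw}, $F_x$ is metrically regular at $0$ for $0$ if and only if $K_g(\Phi(x),\lm)^*\cap\ker\nabla\Phi(x)^*=\{0\}$, and by \eqref{infk} the infimum of the admissible constants is then $\max\{\frac{1}{\|\nabla\Phi(x)^*y\|}\mid y\in K_g(\Phi(x),\lm)^*,\ \|y\|=1\}$. The key point is that this polar critical cone lies in a \emph{fixed} subspace: by \eqref{cal2d}, $K_g(\Phi(x),\lm)^*\subseteq D^*(\sub g)(\Phi(x),\lm)(0)$, which — by the formula \eqref{code2}, valid at any point of $\gph\sub g$ — equals $\spann\{a^i-a^j\mid i,j\in J(\Phi(x))\}+\spann\{b^i\mid i\in I(\Phi(x))\}$; shrinking $U$ so that $\Phi(x)$ stays close to $\Phi(\ox)$ forces $J(\Phi(x))\subseteq J(\Phi(\ox))$ and $I(\Phi(x))\subseteq I(\Phi(\ox))$ by outer semicontinuity of the active index sets \eqref{indset4}, whence $K_g(\Phi(x),\lm)^*\subseteq D^*(\sub g)(\Phi(\ox),\olm)(0)$ by \eqref{code2} again. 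For any unit $y$ in this fixed subspace, \eqref{numell} gives $\|\nabla\Phi(\ox)^*y\|\ge\frac{1}{\bar\gg}>0$; after shrinking $U$ once more so that $\|\nabla\Phi(x)-\nabla\Phi(\ox)\|<\frac{1}{2\bar\gg}$ (continuity of $x\mapsto\nabla\Phi(x)$), this yields
\[
\|\nabla\Phi(x)^*y\|\ \ge\ \|\nabla\Phi(\ox)^*y\|-\|\nabla\Phi(x)-\nabla\Phi(\ox)\|\ >\ \frac{1}{2\bar\gg}.
\]
Applied to every unit $y\in K_g(\Phi(x),\lm)^*$, this shows simultaneously that $K_g(\Phi(x),\lm)^*\cap\ker\nabla\Phi(x)^*=\{0\}$ and that the maximum above is strictly below $2\bar\gg$, so $F_x$ is metrically regular at $0$ for $0$ with constant $2\bar\gg$, and hence globally metrically regular with constant $2\bar\gg$ by the first paragraph.

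I expect the main obstacle to be precisely this uniformity: the polar critical cone $K_g(\Phi(x),\lm)^*$ has to be trapped inside the \emph{fixed} subspace $D^*(\sub g)(\Phi(\ox),\olm)(0)$ uniformly over $(x,v)\in U\cap\gph\sub\ph$, and this is where polyhedrality of $g$ is indispensable, via the outer semicontinuity of the active index sets in \eqref{indset4} together with the explicit formula \eqref{code2}. Everything else — the reduction through \cite[Theorem~5.9(a)]{io}, the criterion \cite[Example~9.44]{rw} with the modulus formula \eqref{infk}, and the norm perturbation estimate — is routine, so no further surprises are anticipated.
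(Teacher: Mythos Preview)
Your argument is correct and follows the same overall architecture as the paper's proof: reduce via positive homogeneity and \cite[Theorem~5.9(a)]{io} to metric regularity of $F_x$ at $0$ for $0$, apply the criterion from \cite[Example~9.44]{rw} with the modulus formula \eqref{infk}, and establish a uniform bound $\gamma_{x,\lm}<2\bar\gg$. The difference lies entirely in how that uniform bound is obtained. The paper argues by contradiction: assuming a sequence $(x^k,v^k)\to(\ox,\ov)$ with unit vectors $y^k\in N_{K_g(\Phi(x^k),\lm^k)}(0)$ violating the bound $3\bar\gg/2$, it passes to a subsequential limit $y$ and uses the outer semicontinuity of the coderivative mapping (which is automatic since $\gph\sub g$ is closed) to conclude $y\in D^*(\sub g)(\Phi(\ox),\olm)(0)$, contradicting \eqref{numell}. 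You instead give a direct, constructive argument: the explicit polyhedral formula \eqref{code2} and the outer semicontinuity of the active index sets $I(\cdot),J(\cdot)$ force the inclusion $K_g(\Phi(x),\lm)^*\subset D^*(\sub g)(\Phi(\ox),\olm)(0)$ for all nearby $(x,v)$, after which a single triangle-inequality perturbation of $\nabla\Phi(x)^*$ against $\nabla\Phi(\ox)^*$ does the job. Your route is more elementary and makes the role of polyhedrality (through the finitely many index configurations) completely explicit; the paper's route is slightly more abstract and would generalize more readily to settings where an explicit formula like \eqref{code2} is unavailable but outer semicontinuity of $D^*(\sub g)(\cdot,\cdot)(0)$ still holds.
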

  \begin{proof} Following the proof of Proposition~\ref{soqcg}, we find $\delta>0$ such that for any $(x,v)\in (\gph \sub \ph )\cap \B_\dd(\ox,\ov)$
  the condition \eqref{dc2} holds.  Shrinking $\delta$ if necessary, we conclude  from  Proposition~\ref{soqcg}(b) that $\Lm(x,v)$ is a singleton for any $(x,v)\in (\gph \sub \ph )\cap \B_\dd(\ox,\ov)$. 
Take any such a pair $(x,v) $,  let $\lm$ be the unique Lagrange multiplier in $\Lm(x,v)$ and define the mapping $G_{x,\lm}:\R^n\tto \R^m$ by $G_{x,\lm}(w)= \nabla \Phi(x)w-K_g(\Phi(x),\lm)$ for any $w\in \R^n$. 
It is not hard to check that $G_{x,\lm}$ is positively  homogeneous. Moreover, it follows from \eqref{cal2d} that $D(\sub g)(\Phi(x), \lm)(0)=N_{K_g(\Phi(x),\, \lm)}(0)$. 
 Appealing now to \eqref{dc2}, we arrive at 
\begin{equation}\label{mrc}
N_{K_g(\Phi(x), \lm)}(0)\cap \ker \nabla \Phi(x)^*=\{0\}.
\end{equation}
 By the discussion prior to this proposition, the latter condition 
amounts to metric regularity of the mapping $G_{x,\lm}$   at $0\in \R^n$ for $0\in \R^m$. As pointed out above, it follows from  \cite[Example~9.44]{rw} that 
 the infimum of the constant $\gamma_{x,\lm}$
for which   metric regularity of  $G_{x,\lm}$ holds at $0$ for $0$   can be calculated by 
$$
\gamma_{x,\lm}:=\max\Big\{\frac{1}{\|\nabla \Phi(x)^*y\|}\, \big |\; \|y\|=1, y\in N_{K_g(\Phi(x),\,\lm)}(0)\Big\}.
$$
We claim that there exists $r\in (0,\delta)$ such that  for any $(x,v)\in (\gph \sub \ph )\cap \B_r(\ox,\ov)$, we have $\gamma_{x,\lm}\le 3\bar\gg/2$ with 
$\bar\gg$ taken from \eqref{numell}. To justify the claim, suppose by contradiction that for any $k\in \N$,  there exist $(x^k,v^k)\in \gph \sub \ph$ with $(x^k,v^k)\to (\ox,\ov)$,   
and $y^k\in N_{K_g(\Phi(x^k),\lm^k)}(0)$ with $\|y^k\|=1$ and $\lm^k\in \Lambda(x^k,v^k)$ such that 
$$
\frac{1}{\|\nabla \Phi(x^k)^*y^k\|} >\frac{3\bar\gg}{2}.
$$
Passing to a subsequence if necessary, we can assume that 
$y^k\to y$ for some vector $y\in \R^m$ with  $\|y\|=1$. Moreover,  we have 
\begin{equation*}
y^k\in    N_{K_g(\Phi(x^k),\,\lm^k)}(0)= K_g(\Phi(x^k),\lm^k)^*
\subset \spann\{ K_g(\Phi(x^k),\lm^k)^*\}= D^*(\sub g)(\Phi(x^k),\lm^k)(0),
\end{equation*}
where the last equality results from \eqref{cal2d}. 
By Proposition~\ref{soqcg}(b), we obtain   $\lm^k\to \olm$.
Passing to the limit then brings us to 
$$
\frac{1}{\|\nabla \Phi(\ox)^*y\|} \ge \frac{3\bar\gg}{2}, \;\; y\in D^*(\sub g)(\Phi(\ox),\olm)(0), \;\; \|y\|=1,
$$
which is a contradiction with the definition of $\bar\gg$. This proves our claim and thus indicates that  there exists $r\in (0,\delta)$ such that  for any $(x,v)\in (\gph \sub \ph )\cap \B_r(\ox,\ov)$,
the infimum of the constants  $\gamma_{x,\lm}$ of metric regularity of $G_{x,\lm}$ at $0$ for $0$  is strictly less than $2\bar \gg$. This tells us that $2\bar\gg$ can be chosen as 
a common constant of metric regularity of $G_{x,\lm}$ at $0$ for $0$ for any    $(x,v)\in (\gph \sub \ph )\cap \B_r(\ox,\ov)$. Since $G_{x,\lm}$ are positively homogenous, it results from 
\cite[Theorem~5.9(a)]{io} that $G_{x,\lm}$ are globally metrically regular with the same constant $2\bar\gg$, which completes the proof.
  \end{proof}
  
\begin{Proposition} \label{redu}Assume that $g:\R^m\to \oR$ is a polyhedral function, $\oz\in \dom g$, and that $s=\dim (\para\{\sub g(\oz)\})$. Then there exist a neighborhood $U$ of $\oz$,  
 an $s\times m$ matrix $B$, and a polyhedral function $\vartheta:\R^s\to \oR$  for which we have $g(z)=\vartheta(Bz)$ for any $z\in U$ and $\ker B=(\para\{\sub g(\oz)\})^\perp$. Consequently, $B$ has full rank. 
 \end{Proposition}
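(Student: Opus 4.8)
The plan is to localize the polyhedral representation \eqref{polyfunc} of $g$ around $\oz$, observe that the resulting local formula for $g$ sees $z$ only through the linear functionals attached to the active index sets $J(\oz)$ and $I(\oz)$, and take for $B$ a matrix whose rows form a basis of the subspace $\para\{\sub g(\oz)\}$; the function $\vartheta$ will then be the local formula for $g$ rewritten in the coordinates $u=Bz$.

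First I would localize. Since $\la a^j,\oz\ra-\alpha_j<g(\oz)$ for every $j\in J\setminus J(\oz)$ and $\la b^i,\oz\ra<\beta_i$ for every $i\in I\setminus I(\oz)$, a routine continuity argument provides a ball $U=\B_r(\oz)$ on which the pieces indexed outside $J(\oz)$ stay strictly below the maximum over $J(\oz)$ and the constraints indexed outside $I(\oz)$ remain slack; hence, with $D:=\{z\in\R^m\,|\,\la b^i,z\ra\le\beta_i,\ i\in I(\oz)\}$,
\[
g(z)=\max_{j\in J(\oz)}\big\{\la a^j,z\ra-\alpha_j\big\}+\dd_D(z)\qquad\text{for all }z\in U,
\]
so that $g|_U$ is a function of the finitely many numbers $\la a^j,z\ra$ $(j\in J(\oz))$ and $\la b^i,z\ra$ $(i\in I(\oz))$. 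By \eqref{code2} the subspace $\para\{\sub g(\oz)\}$ equals $\spann\{a^i-a^j\,|\,i,j\in J(\oz)\}+\spann\{b^i\,|\,i\in I(\oz)\}$; put $s:=\dim\para\{\sub g(\oz)\}$ and let $B$ be the $s\times m$ matrix whose rows form a basis of $\para\{\sub g(\oz)\}$. Then $\im B^\top=\para\{\sub g(\oz)\}$, whence $\ker B=(\para\{\sub g(\oz)\})^\perp$ and $\rank B=s$, which is already the asserted full-rank property.

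Next I would build $\vartheta$. Writing the vectors occurring in the local formula for $g$ as $a^j=B^\top\what a^j$ and $b^i=B^\top\what b^i$ with suitable $\what a^j,\what b^i\in\R^s$, one has $\la a^j,z\ra=\la\what a^j,Bz\ra$ and $\la b^i,z\ra=\la\what b^i,Bz\ra$ for all $z$. Hence, setting $E:=\{u\in\R^s\,|\,\la\what b^i,u\ra\le\beta_i,\ i\in I(\oz)\}$, the function
\[
\vartheta(u):=\max_{j\in J(\oz)}\big\{\la\what a^j,u\ra-\alpha_j\big\}+\dd_E(u),\qquad u\in\R^s,
\]
satisfies $g(z)=\vartheta(Bz)$ for every $z\in U$, and $\vartheta$ is polyhedral because its epigraph is an intersection of finitely many half-spaces of $\R^s\times\R$ (one for each $j\in J(\oz)$, one for each $i\in I(\oz)$). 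This would complete the proof.

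The hard part will be establishing that each $a^j$ with $j\in J(\oz)$ lies in $\im B^\top=\para\{\sub g(\oz)\}$ — equivalently, that $g$ restricted to $U$ is genuinely constant along every slice $(z+\ker B)\cap U$ — so that the coordinate change $z\mapsto Bz$ used above is legitimate. This is precisely the step for which the exact identity \eqref{code2} for $\para\{\sub g(\oz)\}$ must be invoked in full, and where I would concentrate the effort; by contrast, the continuity argument that localizes \eqref{polyfunc}, the coordinate change itself, and the verification that $\vartheta$ is polyhedral are all routine.
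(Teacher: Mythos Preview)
The paper does not give a self-contained argument: it cites \cite[Lemma~3.1]{ms16} and the proof of \cite[Theorem~3.2]{ms16} for the first two claims and invokes rank--nullity for the last. Your proposal supplies a direct construction, but the step you flag as ``the hard part'' --- that each $a^j$ with $j\in J(\oz)$ lies in $\im B^\top=\para\{\sub g(\oz)\}$ --- is not merely hard; it is \emph{false} in general, and the argument cannot be completed as written.

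Take $g(z)=\la a,z\ra$ with $a\neq 0$ and any $\oz\in\R^m$: here $J(\oz)$ is a singleton, $\sub g(\oz)=\{a\}$, $\para\{\sub g(\oz)\}=\{0\}$, and $s=0$, so $B$ is the empty $0\times m$ matrix and $a^{j}=a\notin\{0\}$. Worse, $Bz$ is then the unique element of $\R^0$ and $\vartheta(Bz)$ a constant, whereas $g$ is nowhere locally constant --- so the proposition, read literally, already fails on this example. The identity \eqref{code2} guarantees only that the \emph{differences} $a^i-a^j$ (for $i,j\in J(\oz)$) and the vectors $b^i$ (for $i\in I(\oz)$) span $\para\{\sub g(\oz)\}$; the natural repair of writing $a^j-a^{j_0}=B^\top\what c^j$ for a fixed $j_0\in J(\oz)$ leaves an additive linear term $\la a^{j_0},z\ra$ that is not a function of $Bz$ unless $a^{j_0}\in\para\{\sub g(\oz)\}$. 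The formulation in the cited source presumably carries such an affine shift (e.g.\ $g(z)=\la a^{j_0},z\ra+\vartheta(Bz)$) or a normalizing hypothesis that was dropped in transcription; you should consult \cite{ms16} directly to recover the exact statement before attempting a self-contained proof.
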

\begin{proof} The first claim was established in \cite[Lemma~3.1]{ms16}. The second claim was justified in the proof of \cite[Theorem~3.2]{ms16}.
Finally, the last claim is a direct consequence of the classical rank-nullity theorem from linear algebra. 
\end{proof}

The above result has an important implication for the composite form \eqref{CF} in the presence of the SOQC \eqref{soqc}, allowing us to equivalently express 
$\ph$ in the form of \eqref{CF} with the SOQC replaced by a stronger condition.
\begin{Corollary} \label{redu2} Assume that $\ph:\R^n\to \oR$ has the representation \eqref{CF} around $\ox\in \R^n$, $\ov\in \sub \ph(\ox)$, and that $\olm\in \Lm(\ox,\ov)$ and  the SOQC \eqref{soqc} holds at $(\ox,\olm)$. Then
there exists a neighborhood $O$ of $\ox$ on which $\ph$ can be expressed as 
 \begin{equation}\label{CFn}
 \ph(x)=(\vartheta\circ \Psi)(x)\quad \mbox{for all}\;\; x\in O,
 \end{equation}
where $\vartheta:\R^s\to \oR$ is a polyhedral function with $s=\dim (\para\{\sub g(\Phi(\ox))\})$ and where $\Psi:\R^n\to \R^s$, defined by $\Psi=B\circ \Phi$ with $B$  and $\Phi$ taken from Proposition~{\rm\ref{redu}}
and \eqref{CF}, respectively. Moreover,  $\nabla \Psi(\ox)$ has full rank.
\end{Corollary}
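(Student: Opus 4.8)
The plan is to derive \eqref{CFn} directly from Proposition~\ref{redu} applied at $\oz=\Phi(\ox)$, and then verify the rank assertion by unwinding the kernel of $B$ against the SOQC \eqref{soqc}. First I would use the continuity of $\Phi$ around $\ox$ to choose a neighborhood $O$ of $\ox$ that lies inside the neighborhood on which \eqref{CF} holds and is small enough that $\Phi(O)\subset U$, where $U$ is the neighborhood of $\oz=\Phi(\ox)$ provided by Proposition~\ref{redu} together with the $s\times m$ matrix $B$ and the polyhedral function $\vt:\R^s\to\oR$ satisfying $g(z)=\vt(Bz)$ on $U$ and $\ker B=(\para\{\sub g(\Phi(\ox))\})^\perp$, with $s=\dim(\para\{\sub g(\Phi(\ox))\})$. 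Then for every $x\in O$ one has $\ph(x)=g(\Phi(x))=\vt(B\Phi(x))=(\vt\circ\Psi)(x)$ with $\Psi:=B\circ\Phi$, which is exactly \eqref{CFn}.

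It then remains to show that $\nabla\Psi(\ox)=B\,\nabla\Phi(\ox)$, an $s\times n$ matrix, has full rank; equivalently, that its adjoint $\nabla\Psi(\ox)^*=\nabla\Phi(\ox)^*B^*$ is injective. I would argue by taking $y\in\R^s$ with $\nabla\Phi(\ox)^*B^*y=0$. On the one hand $B^*y\in\rge B^*=(\ker B)^\perp=\para\{\sub g(\Phi(\ox))\}$, and by \eqref{code2} this subspace coincides with $D^*(\sub g)(\Phi(\ox),\olm)(0)$. On the other hand $B^*y\in\ker\nabla\Phi(\ox)^*$ by the choice of $y$. Hence $B^*y\in D^*(\sub g)(\Phi(\ox),\olm)(0)\cap\ker\nabla\Phi(\ox)^*$, which is $\{0\}$ by the SOQC \eqref{soqc}; thus $B^*y=0$, and since $B$ has full rank $s$ by Proposition~\ref{redu}, $B^*$ is injective, so $y=0$. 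Therefore $\nabla\Psi(\ox)^*$ is injective and $\nabla\Psi(\ox)$ has full rank, completing the proof.

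I do not expect a genuine obstacle here: the statement is essentially bookkeeping built on Proposition~\ref{redu} and the identity \eqref{code2}. The only step needing a little care is the chain of identifications $\rge B^*=(\ker B)^\perp=\para\{\sub g(\Phi(\ox))\}=D^*(\sub g)(\Phi(\ox),\olm)(0)$, which is what turns the kernel condition on $B$ into injectivity of $\nabla\Psi(\ox)^*$; the remaining ingredients are just the chain rule for gradients and the rank–nullity theorem.
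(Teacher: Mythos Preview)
Your proof is correct and matches the paper's approach: the representation \eqref{CFn} is obtained exactly as the paper does, by applying Proposition~\ref{redu} at $\oz=\Phi(\ox)$ and shrinking the neighborhood via the continuity of $\Phi$. For the rank assertion the paper simply cites \cite[Proposition~4.1]{ms16}, whereas you supply the argument in full via the chain $\rge B^*=(\ker B)^\perp=\para\{\sub g(\Phi(\ox))\}=D^*(\sub g)(\Phi(\ox),\olm)(0)$ together with the SOQC \eqref{soqc}; this is precisely the natural self-contained proof and is correct.
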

\begin{proof} Shrinking the neighborhood $O$ in \eqref{CF}, together with Proposition~\ref{redu}, gives us the representation \eqref{CFn}. 
The claim about $\nabla \Psi(\ox)$ was   established  in \cite[Proposition~4.1]{ms16}. 
\end{proof}
    
    A function  $f\colon\R^n\to\oR$ is called {  prox-regular} at $\ox$ for $\ov$ if $f$ is finite at $\ox$ and locally lower semicontinuous (lsc)  around $\ox$ with $\ov\in\sub f(\ox)$, and there exist 
constants $\ve>0$ and $\rho> 0$ such that for all $x\in\B_{\ve}(\ox)$ with $f(x)\le f(\ox)+\ve$ we have
\begin{equation}\label{prox}
f(x)\ge f(u)+\la v,x-u\ra-\frac{\rho}{2}\|x-u\|^2\;\mbox{ whenever }\;(u,v)\in(\gph\sub f)\cap\B_{\ve}(\ox,\ov).
\end{equation}
The function $f$ is called {  subdifferentially continuous} at $\ox$ for $\ov$ if the convergence $(x^k,v^k)\to(\ox,\ov)$ with $v^k\in\sub f(x^k)$ yields $f(x^k)\to f(\ox)$ as $k\to\infty$. 
  
  \begin{Proposition}\label{phpro}
  Assume that $\ph$ has the representation \eqref{CF} around $\ox\in \R^n$ and  $\ov\in \sub \ph(\ox)$, and that the basic constraint qualification \eqref{bcq1} holds at $\ox$.
  Then the following properties hold.
  \begin{itemize}[noitemsep,topsep=2pt]
\item [ \rm {(a)}]  $\ph$ is prox-regular and subdifferentially continuous at $\ox$ for   $\ov$.
\item [ \rm {(b)}] The critical cone $K_{\ph}(\ox,\ov)$ can be represented by 
\begin{equation}\label{criph}
K_\ph(\ox,\ov)=\big\{w\in \R^n|\; \nabla \Phi(\ox)w\in K_g(\Phi(\ox), \olm)\big\}=N_{\sub\ph(\ox)}(\ov),
\end{equation}
where $\olm$ can be any vector in $\Lm(\ox,\ov)$. Moreover, for any $w\in K_{\ph}(\ox,\ov)$, we always have 
$$
N_{K_\ph(\ox,\ov)}(w)=\nabla\Phi(\ox)^*N_{K_g(\Phi(\ox), \olm)}( \nabla\Phi(\ox)w).
$$
\item [ \rm {(c)}] We have $\sub \ph(\ox)=\rs \ph(\ox)$. Consequently, $N_{\sub \ph(\ox)}(\ov)$ is a linear subspace if and only if $\ov\in \ri \sub \ph(\ox)$.
\item [ \rm {(d)}] $\ov\in \ri \sub \ph(\ox)$ if and only if $\olm\in \ri \sub g(\Phi(\ox))$ for any $\olm\in \Lm(\ox,\ov)$.
\end{itemize}
  \end{Proposition}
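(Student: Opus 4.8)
The plan is to derive all four items from the observation that, under \eqref{bcq}, the composite function $\ph=g\circ\Phi$ is fully amenable at $\ox$ in the sense of \cite[Definition~10.23(a)]{rw}: here $g$ is polyhedral (hence proper, lsc, convex and piecewise linear), $\Phi$ is twice continuously differentiable, and \eqref{bcq} is exactly the qualification condition in that definition, since for the convex function $g$ the horizon subdifferential at $\Phi(\ox)$ equals $N_{\dom g}(\Phi(\ox))$. Part (a) is then immediate from the classical fact that fully amenable functions are prox-regular and subdifferentially continuous at every point of their domain, for every subgradient there; alternatively, the polyhedral function $g$ is trivially prox-regular and subdifferentially continuous at $\Phi(\ox)$ for any $\olm\in\sub g(\Phi(\ox))$, and both properties pass to $g\circ\Phi$ under \eqref{bcq}. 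I expect no real obstacle in (a) beyond quoting the correct statements.

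For (b) I would combine the first-order chain rules available under \eqref{bcq} --- namely $\d\ph(\ox)(w)=\d g(\Phi(\ox))(\np(\ox)w)$ for all $w$ and $\sub\ph(\ox)=\np(\ox)^*\sub g(\Phi(\ox))$, see \cite[Theorem~10.6]{rw} --- with the convexity of $g$, which by \cite[Theorem~8.30]{rw} makes $\d g(\Phi(\ox))$ the support function of $\sub g(\Phi(\ox))$ (nonempty, as it contains any $\olm\in\Lm(\ox,\ov)$). This yields
$$
\d\ph(\ox)(w)=\d g(\Phi(\ox))(\np(\ox)w)=\sup_{\mu\in\sub g(\Phi(\ox))}\la\np(\ox)^*\mu,w\ra=\sup_{\olm\in\sub\ph(\ox)}\la\olm,w\ra,
$$
so $\d\ph(\ox)$ is the support function of the convex set $\sub\ph(\ox)$, and the standard characterization of the normal cone of a convex set through its support function gives $K_\ph(\ox,\ov)=\{w\,|\,\la\ov,w\ra=\d\ph(\ox)(w)\}=N_{\sub\ph(\ox)}(\ov)$. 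For the first equality in \eqref{criph}, fix any $\olm\in\Lm(\ox,\ov)$; then $\la\ov,w\ra=\la\olm,\np(\ox)w\ra$, so by the subderivative chain rule $w\in K_\ph(\ox,\ov)$ iff $\la\olm,\np(\ox)w\ra=\d g(\Phi(\ox))(\np(\ox)w)$, i.e.\ iff $\np(\ox)w\in K_g(\Phi(\ox),\olm)$; this also shows the preimage $\np(\ox)^{-1}\big(K_g(\Phi(\ox),\olm)\big)$ is independent of the choice of $\olm\in\Lm(\ox,\ov)$. Finally, since $K_\ph(\ox,\ov)=\np(\ox)^{-1}\big(K_g(\Phi(\ox),\olm)\big)$ is the preimage of a polyhedral convex cone under a linear map, the identity $N_{K_\ph(\ox,\ov)}(w)=\np(\ox)^*N_{K_g(\Phi(\ox),\olm)}(\np(\ox)w)$ for $w\in K_\ph(\ox,\ov)$ follows from the active-index description of normal cones to polyhedral cones, with no constraint qualification required. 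Verifying that polyhedrality really removes the qualification condition here is the one step I would write out with care; it is also where the hypothesis that $g$ is polyhedral --- rather than merely convex --- is genuinely used.

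For (c), the identity $\sub\ph(\ox)=\rs\ph(\ox)$ is the subdifferential regularity of the fully amenable function $\ph$ at $\ox$; it can also be read off from the chain $\rs\ph(\ox)\subseteq\sub\ph(\ox)=\np(\ox)^*\sub g(\Phi(\ox))=\np(\ox)^*\rs g(\Phi(\ox))\subseteq\rs\ph(\ox)$, using $\sub g=\rs g$ for the convex $g$ and the regular-subdifferential chain rule under \eqref{bcq}. The stated consequence is then convex analysis applied to the closed convex set $S:=\sub\ph(\ox)$: for $\ov\in S$ one always has $N_S(\ov)\supseteq\big(\aff S-\ov\big)^\perp$, with equality precisely when $\ov\in\ri S$; and if $N_S(\ov)$ is a subspace, then $w,-w\in N_S(\ov)$ forces $\la w,\mu-\ov\ra=0$ for all $\mu\in S$, hence $N_S(\ov)\subseteq\big(\aff S-\ov\big)^\perp$ and thus $\ov\in\ri S$.

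For (d) I would once more use $\sub\ph(\ox)=\np(\ox)^*\sub g(\Phi(\ox))$, now together with the classical fact that relative interiors commute with linear images: $\ri\big(\np(\ox)^*\sub g(\Phi(\ox))\big)=\np(\ox)^*\big(\ri\sub g(\Phi(\ox))\big)$. The implication ``$\olm\in\ri\sub g(\Phi(\ox))$ for $\olm\in\Lm(\ox,\ov)$ $\Rightarrow$ $\ov\in\ri\sub\ph(\ox)$'' is then immediate, since $\ov=\np(\ox)^*\olm\in\np(\ox)^*\big(\ri\sub g(\Phi(\ox))\big)=\ri\sub\ph(\ox)$; and conversely, $\ov\in\ri\sub\ph(\ox)=\np(\ox)^*\big(\ri\sub g(\Phi(\ox))\big)$ produces some $\olm\in\ri\sub g(\Phi(\ox))$ with $\np(\ox)^*\olm=\ov$, which is the required multiplier in $\Lm(\ox,\ov)$. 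The only delicate point in (d) is the commutation identity for $\ri$ under a linear map, which is standard for convex sets.
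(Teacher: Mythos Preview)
Your argument follows essentially the same route as the paper's, which likewise reduces (a)--(c) to standard references on fully amenable functions in \cite{rw} (specifically \cite[Proposition~13.32]{rw}, \cite[Theorem~13.14]{rw}, and \cite[Exercise~10.25(a)]{rw}) and handles (d) via $\sub\ph(\ox)=\nabla\Phi(\ox)^*\sub g(\Phi(\ox))$ together with the commutation of relative interiors with linear images (this is \cite[Proposition~2.44(a)]{rw}, which the paper cites). Your write-up is, if anything, more explicit than the paper's, which is mostly a string of citations; in particular, your direct verification of the normal-cone chain rule via polyhedrality and of the ``linear subspace $\Leftrightarrow$ relative interior'' step in (c) is exactly the content behind the paper's one-line justifications.

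One point deserves a comment. As literally written, part (d) asserts the equivalence for \emph{every} $\olm\in\Lm(\ox,\ov)$, and your converse only produces \emph{some} such $\olm$. In fact, under only the basic qualification \eqref{bcq} the ``for every'' reading can fail: take $\Phi(x)=(x,x)$, $g(z_1,z_2)=|z_1|+|z_2|$, $\ox=0$, $\ov=0$; then $\ov\in\ri\sub\ph(0)=(-2,2)$, yet $\olm=(1,-1)\in\Lm(0,0)$ lies on the boundary of $\sub g(0,0)=[-1,1]^2$. What you prove --- and what the paper's own proof via \cite[Proposition~2.44(a)]{rw} proves --- is the ``there exists'' version, and this is all that is used downstream, since (d) is only invoked under the SOQC \eqref{soqc}, where $\Lm(\ox,\ov)$ is a singleton and the distinction disappears. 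So your argument is correct for the statement that is actually needed, and matches the paper's at the same level of precision.
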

 \begin{proof} Part (a) results from     \cite[Proposition~13.32]{rw}.  The first equivalent representation of $K_\ph(\ox,\ov)$  in (b) follows from  the chain rule for the subderivative in \cite[Theorem~13.14]{rw},
 which holds under the  basic constraint qualification \eqref{bcq1}. The second one was taken from \cite[Theorem~13.14]{rw}.
The claimed chain rule for normal cones in (b) is an immediate consequence of the fact that  both critical cones $K_\ph(\ox,\ov)$ and $K_g(\Phi(\ox), \olm)$ are polyhedral.
The first claim in (c) can be found in \cite[Exercise~10.25(a)]{rw}. The second claim results from the fact that $\sub \ph(\ox)$ is  convex.
To justify (d), observe that \eqref{bcq1} ensures that $\Lm(\ox,\ov)\neq \emptyset$. Thus, for any $\olm\in \Lm(\ox,\ov)$, we have $\ov=\nabla\Phi(\ox)^*\olm$. The claimed equivalence then results from \cite[Proposition~2.44(a)]{rw}.
 \end{proof}

\section{Chain Rule for Strict Twice Epi-Differentiability}\label{sec3}

This section is devoted to study an important second-order variational property, called strict twice epi-differentiability, for  the composite function $\ph$ in \eqref{CF}.
To this end, consider  a function $f: \R^n \to \oR$ and $\ox \in \R^n$ with $f(\ox)$ finite and  define the parametric  family of 
second-order difference quotients of $f$ at $\ox$ for $\ov\in \sub f(\ox)$ by 
$$
\Delta_t^2 f(\bar x , \ov)(w)=\frac{f(\ox+tw)-f(\ox)-t\langle \ov,\,w\rangle}{\frac {1}{2}t^2}
$$
for any $w\in \R^n$ and $t>0$. The {second subderivative} of $f$ at $\ox$ for $\ov$, denoted $\d^2 f(\bar x , \ov)$, is an extended-real-valued function defined  by 
\begin{equation*}\label{ssd}
\d^2 f(\bar x , \ov)(w)= \liminf_{\substack{
   t\searrow 0 \\
  w'\to w
  }} \Delta_t^2 f(\ox , \ov)(w'),\;\; w\in \R^n.
\end{equation*}
Following \cite[Definition~13.6]{rw},   $f$ is said to be {twice epi-differentiable} at $\bar x$ for $\ov$
if  the functions $  \Delta_t^2 f(\bar x , \ov)$ epi-converge to $  \d^2 f(\bar x,\ov)$ as $t\searrow 0$.  Further, we say that $f$ is {\em strictly} twice epi-differentiable at $\bar x$ for $\ov$ if 
the functions $  \Delta_t^2 f(  x , v)$ epi-converge  to a function as $t\searrow 0$, $(x,v)\to (\ox,\ov)$ with $f(x)\to f(\ox)$ and $(x,v)\in \gph \sub f$.  If this condition holds, the limit function is then the second subderivative $  \d^2 f(\bar x,\ov)$.
The following result, established recently in \cite[Theorem~4.3]{hjs},
achieved a simple characterization of strict twice epi-differentiability of polyhedral functions.

 \begin{Proposition}[strict twice epi-differentiability of polyhedral functions]\label{riste}
Assume that $g:\R^m\to \oR$ is a polyhedral function and  that $(\oz,\olm)\in \gph \sub g$. Then 
 the following  properties are equivalent:
\begin{itemize}[noitemsep,topsep=2pt]
\item [ \rm {(a)}]  there is a neighborhood $U$ of $(\oz,\olm)$ such that for any $(z,\lm)\in U\cap \gph \sub g$, $g$ is strictly twice epi-differentiable at $z$ for $\lm$;
\item [ \rm {(b)}] $\olm \in \ri \partial g(\oz)$.
\end{itemize}
\end{Proposition}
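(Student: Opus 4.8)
I would deduce the local structure of $\gph\sub g$ near $(\oz,\olm)$ from the polyhedral calculus already recorded — in particular \eqref{crif}, \eqref{cal2d}, \eqref{proto}, \eqref{domgd}, \eqref{code2} — together with the standard facts that $\gph\sub g$ is a finite union of polyhedral convex sets and that a polyhedral $g$ is twice epi-differentiable at each $(z,\lm)\in\gph\sub g$ with $\d^2 g(z,\lm)=\dd_{K_g(z,\lm)}$, where $K_g(z,\lm)=N_{\sub g(z)}(\lm)$ by \eqref{crif}. Throughout put $L:=\para\{\sub g(\oz)\}$; this is a linear subspace, and $L=D^*(\sub g)(\oz,\olm)(0)$ by \eqref{code2}.

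For the implication {\rm(b)}$\Rightarrow${\rm(a)}, the key step is to show that $\olm\in\ri\sub g(\oz)$ forces $\gph\sub g$ to be a flat affine subspace near $(\oz,\olm)$. Indeed, if $\olm\in\ri\sub g(\oz)$ then $K_g(\oz,\olm)=N_{\sub g(\oz)}(\olm)=L^\perp$, so by \eqref{cal2d} and \eqref{proto}
\[
T_{\gph\sub g}(\oz,\olm)=\gph D(\sub g)(\oz,\olm)=\gph N_{L^\perp}=L^\perp\times L ,
\]
a linear subspace of $\R^m\times\R^m$. Since $\gph\sub g$ is a finite union of polyhedra, at a point where its tangent cone is a subspace $S$ it must coincide, on some neighborhood, with the affine set through that point with direction $S$; hence $\gph\sub g$ agrees with $(\oz,\olm)+L^\perp\times L$ near $(\oz,\olm)$. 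This forces $g$ to equal, near $\oz$, an affine function plus the indicator of the affine subspace $\oz+L^\perp$ (a maximal monotone operator whose graph is an affine subspace is of this form). For such a $g$ and every $(z,\lm)\in\gph\sub g$ near $(\oz,\olm)$ (so $z-\oz\in L^\perp$, $\lm-\olm\in L$) one checks directly that $\Delta_t^2 g(z,\lm)=\dd_{L^\perp}$ for all small $t>0$; thus the joint epi-limit is the function $\dd_{L^\perp}=\d^2 g(\oz,\olm)$, which gives strict twice epi-differentiability at every such $(z,\lm)$, i.e.\ {\rm(a)}.

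For {\rm(a)}$\Rightarrow${\rm(b)} I would argue the contrapositive, and in fact exhibit failure already at $(\oz,\olm)$ itself, which lies in every neighborhood $U$. Suppose $\olm\notin\ri\sub g(\oz)$. Then $K_g(\oz,\olm)=N_{\sub g(\oz)}(\olm)\supsetneq\big(\para\{\sub g(\oz)\}\big)^\perp$, so I can fix $w\in K_g(\oz,\olm)$ on which $\la w,\cdot\ra$ is not constant over $\sub g(\oz)$; as $\olm$ maximizes $\la w,\cdot\ra$ there, choosing $v^*\in\sub g(\oz)$ with $\la w,v^*\ra<\la w,\olm\ra$ gives $c:=\la w,\olm-v^*\ra>0$. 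Keep $z\equiv\oz$, set $\lm^k:=(1-\tfrac1k)\olm+\tfrac1k v^*\in\sub g(\oz)$ (so $(\oz,\lm^k)\in\gph\sub g$ and $\lm^k\to\olm$) and $t_k:=k^{-2}$. Then convexity ($g(\oz+t_k w^k)\ge g(\oz)+t_k\,\d g(\oz)(w^k)$) and the fact that $\d g(\oz)$ is the support function of $\sub g(\oz)$ give, for every $w^k\to w$,
\[
\Delta_{t_k}^2 g(\oz,\lm^k)(w^k)\ \ge\ \frac{2}{t_k}\big(\d g(\oz)(w^k)-\la\lm^k,w^k\ra\big)\ \ge\ \frac{2}{t_k}\,\la\olm-\lm^k,w^k\ra\ =\ \frac{2}{k\,t_k}\,\la\olm-v^*,w^k\ra\ \ge\ c\,k
\]
for all large $k$, hence $\to\infty$. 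Therefore no recovery sequence for $w$ exists, and since $\d^2 g(\oz,\olm)(w)=\dd_{K_g(\oz,\olm)}(w)=0$, the quotients $\Delta_{t_k}^2 g(\oz,\lm^k)$ do not epi-converge to $\d^2 g(\oz,\olm)$; so $g$ is not strictly twice epi-differentiable at $\oz$ for $\olm$, and {\rm(a)} fails.

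The step I expect to be the main obstacle is the geometric one in {\rm(b)}$\Rightarrow${\rm(a)}: turning $\olm\in\ri\sub g(\oz)$ into the assertion that $\gph\sub g$ is locally an affine subspace, equivalently that $g$ is, near $\oz$, an affine function restricted to an affine subspace. This is precisely where polyhedrality enters essentially, through \eqref{cal2d}, \eqref{proto}, \eqref{domgd} and the elementary fact that a finite union of polyhedra with a subspace tangent cone coincides locally with that affine subspace; once it is in place, the difference-quotient computations and the contrapositive construction are routine.
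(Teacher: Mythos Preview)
Your contrapositive for {\rm(a)}$\Rightarrow${\rm(b)} is correct: the construction with $\lm^k=(1-\tfrac1k)\olm+\tfrac1k v^*$ and $t_k=k^{-2}$ cleanly blocks any recovery sequence at $w$, so strict twice epi-differentiability fails at $(\oz,\olm)$ itself.

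The gap is in {\rm(b)}$\Rightarrow${\rm(a)}. Your geometric step---that $\gph\sub g$, being a finite union of polyhedra with tangent cone $L^\perp\times L$ at $(\oz,\olm)$, locally coincides with the affine set $(\oz,\olm)+L^\perp\times L$---is correct and nice. But your next step, that this forces $g$ to be \emph{locally near $\oz$} an affine function plus $\dd_{\oz+L^\perp}$, is false. Take $m=1$, $g(z)=|z|$, $\oz=0$, $\olm=0$: then $L=\R$, $L^\perp=\{0\}$, $\gph\sub g$ is indeed $\{0\}\times\R$ near $(0,0)$, yet $g$ is finite on a full neighborhood of $0$ and is not $\dd_{\{0\}}$ plus an affine function there. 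The issue is that the \emph{local} structure of $\gph\sub g$ near $(\oz,\olm)$ only constrains $\sub g(z)\cap\B_\ve(\olm)$, not $g$ itself; your parenthetical about maximal monotone operators applies to the global graph, not a localization. Consequently your claim $\Delta_t^2 g(z,\lm)=\dd_{L^\perp}$ for all small $t$ is unfounded (in the example above, $\Delta_t^2 g(0,\lm)(w)=2(|w|-\lm w)/t$).

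The fix is to stop one step earlier. From your local affine description of $\gph\sub g$ you get immediately, via \eqref{cal2d} and \eqref{proto}, that $\gph N_{K_g(z,\lm)}=T_{\gph\sub g}(z,\lm)=L^\perp\times L$ for every $(z,\lm)\in\gph\sub g$ near $(\oz,\olm)$, hence $K_g(z,\lm)=L^\perp$ there. This is exactly the critical-cone stability \eqref{crieq} that the paper flags as the ``main driving force'' of the cited result. It gives $\d^2 g(z,\lm)\equiv\dd_{L^\perp}$ on a neighborhood of $(\oz,\olm)$ in $\gph\sub g$, so the family $\d^2 g(z',\lm')$ trivially epi-converges as $(z',\lm')\to(z,\lm)$; now invoke Proposition~\ref{sted} with $\Phi=I$ (the basic constraint qualification \eqref{bcq} is vacuous) to conclude strict twice epi-differentiability at every nearby $(z,\lm)$. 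Thus your route, once corrected, recovers precisely the mechanism the paper points to, via an attractive geometric argument for \eqref{crieq}.
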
 

It is insightful to add that  the main driving force of  the above characterization  was  the following   result for polyhedral functions,  observed in \cite[Corollary~3.4(b)]{hjs}, which holds under 
 the relative interior condition $\olm \in \ri \partial g(\oz)$:
There exists a neighborhood $V$  of $(\oz,\olm)$  for which we have 
\begin{equation}\label{crieq}
K_g(z,\lm)=K_g(\oz,\olm)\quad \mbox{for all}\;\; (z,\lm)\in V\cap \gph \sub g.
\end{equation}

The above result  motivates us to pursue the possibility of obtaining a similar characterization of strict twice epi-differentiability 
for the composite form \eqref{CF}.  To this end, we define the {\em strict second subderivative} of $f$ at 
$\bar x$ for $\ov$ with  $\ov \in \partial f(\ox)$ at $w\in \R^n$ by 
\begin{equation*}
\d_s^2f(\ox, \ov)(w) = \liminf_{\substack{
t\searrow 0, \, w'\to w \\
(x, v) \toset_{\gph \partial f}(\ox, \ov)\\
f(x)\to f(\ox)}} 
\Delta_t^2 f(x , v)(w').
\end{equation*}
When $f$ is subdifferentially continuous at $\ox$ for $\ov$,  we can drop the requirement $f(x)\to f(\ox)$ in the definition above. 
Two immediate observations   can be made about  the strict second subderivative. The first one is that it is positively homogeneous of degree $2$ (see \cite[Definition~13.4]{rw} for its definition). 
The second one is that it is always lsc. Both claims can be proven as of those for the second subderivative established in \cite[Proposition~13.5]{rw}. 
\begin{Lemma} \label{prof}Assume that $f:\R^n\to \oR$    and $(\ox,\ov)\in \gph \sub f$. Then we have
$$
\d_s^2f(\ox, \ov)(w) \le \liminf_{\substack{
w'\to w \\
(x, v) \toset_{\gph \partial f}(\ox, \ov)\\
f(x)\to f(\ox)}} 
\d^2 f(x , v)(w').
$$
\end{Lemma}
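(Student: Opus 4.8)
The plan is to prove this by a diagonalization argument that collapses the nested $\liminf$ on the right-hand side onto a single sequence along which the $\liminf$ defining $\d_s^2 f(\ox,\ov)(w)$ can be estimated from above. Recall that, unravelling the definitions, $\d_s^2 f(\ox,\ov)(w)$ is the infimum, over all sequences $t_k\searrow 0$, $w_k'\to w$, $(x_k,v_k)\to(\ox,\ov)$ with $(x_k,v_k)\in\gph\sub f$ and $f(x_k)\to f(\ox)$, of $\liminf_{k\to\infty}\Delta_{t_k}^2 f(x_k,v_k)(w_k')$; similarly, the right-hand side is the infimum, over all sequences $(x_k,v_k)\to(\ox,\ov)$ with $(x_k,v_k)\in\gph\sub f$, $f(x_k)\to f(\ox)$, and $w_k\to w$, of $\liminf_{k\to\infty}\d^2 f(x_k,v_k)(w_k)$ (the symbol $t\searrow 0$ in its subscript is inert). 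Hence it suffices to fix one such sequence $(x_k,v_k,w_k)$ and show
\begin{equation*}
\d_s^2 f(\ox,\ov)(w)\ \le\ \liminf_{k\to\infty}\d^2 f(x_k,v_k)(w_k);
\end{equation*}
taking the infimum over all admissible sequences then yields the claim.

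First I would set $\beta:=\liminf_k\d^2 f(x_k,v_k)(w_k)$ and dispose of the trivial case $\beta=+\infty$. Assuming $\beta<+\infty$, I pass to a subsequence along which $\d^2 f(x_k,v_k)(w_k)\to\beta$ in $\oR$. Next I unfold the definition of the ordinary second subderivative at each base point: since $v_k\in\sub f(x_k)$ and $\d^2 f(x_k,v_k)(w_k)=\liminf_{s\searrow 0,\,w'\to w_k}\Delta_s^2 f(x_k,v_k)(w')$, for every $k$ I can select $s_k\in(0,1/k]$ and $w_k'$ with $\|w_k'-w_k\|\le 1/k$ so that $\Delta_{s_k}^2 f(x_k,v_k)(w_k')\le\d^2 f(x_k,v_k)(w_k)+1/k$ when the latter is finite, and $\Delta_{s_k}^2 f(x_k,v_k)(w_k')\le -k$ when it equals $-\infty$. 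In either case $\limsup_k\Delta_{s_k}^2 f(x_k,v_k)(w_k')\le\beta$.

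Finally, I would observe that the diagonal sequence $(s_k,w_k',x_k,v_k)$ is admissible in the definition of $\d_s^2 f(\ox,\ov)(w)$: we have $s_k\searrow 0$; $w_k'\to w$, since $\|w_k'-w_k\|\le 1/k$ and $w_k\to w$; $(x_k,v_k)\to(\ox,\ov)$ with $(x_k,v_k)\in\gph\sub f$; and $f(x_k)\to f(\ox)$. Therefore
$$
\d_s^2 f(\ox,\ov)(w)\ \le\ \liminf_{k\to\infty}\Delta_{s_k}^2 f(x_k,v_k)(w_k')\ \le\ \limsup_{k\to\infty}\Delta_{s_k}^2 f(x_k,v_k)(w_k')\ \le\ \beta,
$$
which is exactly the desired inequality. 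The argument is entirely elementary; the only point that calls for care is the simultaneous bookkeeping of the two nested $\liminf$'s together with the $-\infty$ case in the selection of $s_k$ and $w_k'$ — essentially the same device that underlies the standard monotonicity and lower semicontinuity facts for the ordinary second subderivative (cf. \cite[Proposition~13.5]{rw}) — so I do not anticipate a genuine obstacle.
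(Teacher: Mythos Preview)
Your proposal is correct and follows essentially the same diagonalization argument as the paper's proof: pick a sequence realizing the right-hand $\liminf$, unfold the definition of $\d^2 f(x_k,v_k)(w_k)$ to obtain approximating difference quotients, and assemble a diagonal sequence admissible in the definition of $\d_s^2 f(\ox,\ov)(w)$. You are slightly more explicit than the paper in handling the $-\infty$ case and in the bookkeeping of $s_k$ and $w_k'$, and you correctly observe that the $t\searrow 0$ in the subscript of the right-hand side is inert, but the substance is identical.
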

\begin{proof}
Denote by $\al$ the value of the limit on the right-hand side of the above inequality. Thus, we can find sequences $(x^k,v^k)\to (\ox,\ov)$ with  $(x^k,v^k)\in \gph \sub f$, $f(x^k)\to f(\ox)$,  and $w^k\to w$
for which we have $  \d^2f(x^k,v^k)(w^k)\to \al $. By the definition of the second subderivative, for each $k$,  we find sequences $w^k_m\to w^k $ and $t^m_k\searrow 0$ as $m\to \infty$
such that $\Delta^2_{t_k^m}f(x^k,v^k)(w^k_m)\to \d^2f(x^k,v^k)(w^k)$ as $m\to \infty$. Using a standard diagonalization technique, we can find sequences $m_k\to \infty$, $w^k_{m_k}\to w$ and $t_k^{m_k}\searrow 0$
as $k\to \infty$ such that $\Delta^2_{t_k^{m_k}} f(x^{k},v^{k})(w^k_{m_k})\to  \al$ as $k\to \infty$. It is evident from the definition of the strict second subderivative that $\al \ge \d_s^2f(\ox,\ov)(w)$, which proves  our claimed inequality.
\end{proof}
 We show below that the strict second subderivative can be utilized to obtain a characterization of the so-called {\em uniform quadratic growth condition};
 see   \cite[Definition~5.16]{bs}. The latter condition is known to be equivalent to the {\em   variational strong convexity}, defined by Rockafellar in \cite[Definition~2]{r17}, for  lsc functions; see Corollary~\ref{vsc} for the definition of this property. 
 
 \begin{Proposition}\label{uqgc} Assume that $f:\R^n\to \oR$ is an lsc function  and $(\ox,\ov)\in \gph \sub f$. Then the following properties are equivalent:
 \begin{itemize}[noitemsep,topsep=2pt]
\item [ \rm {(a)}]  $\d^2_s f(\ox,\ov)(w)>0$ for all $w\in \R^n\setminus \{0\}$;
\item [ \rm {(b)}]    there exist convex neighborhoods $U$ of $\ox$ and $V$ of $\ov$ and  constants $\kappa>0$ and $\ve>0$ such that 
$$
f(x')\ge f(x)+\la v,x'-x\ra +\frac{\kappa}{2}\|x-x'\|^2\quad \mbox{for all}\;\; (x,v)\in (\gph \sub f)\cap (U_\ve\times V), \; x'\in U,
$$
where $U_\ve=\{x\in U|\; f(x)< f(\ox)+\ve\}$.
 \end{itemize}
 \end{Proposition}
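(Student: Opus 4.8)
The plan is to prove the two implications directly from the definition of the strict second subderivative as the $\liminf$ of the second-order difference quotients $\Delta_t^2 f(x,v)(w')$ taken over all sequences $t\searrow 0$, $w'\to w$, $(x,v)\to(\ox,\ov)$ along $\gph\sub f$ with $f(x)\to f(\ox)$. The observation that makes everything work is that the growth inequality in (b) is precisely a uniform lower bound on these difference quotients along exactly the sequences that appear in the definition of $\d^2_s f(\ox,\ov)(w)$; thus each implication reduces to matching one quantity against the other along a suitable sequence, in the spirit of the classical characterization of the (non-uniform) quadratic growth condition via positivity of the second subderivative.

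For the implication (b)$\Rightarrow$(a): fix $w\neq 0$ and let $U$, $V$, $\kappa$, $\ve$ be as in (b). Let $t_k\searrow 0$, $w^k\to w$, $(x^k,v^k)\to(\ox,\ov)$ along $\gph\sub f$ with $f(x^k)\to f(\ox)$ be any sequence admissible in the $\liminf$ defining $\d^2_s f(\ox,\ov)(w)$. Since $x^k\to\ox$, $v^k\to\ov$, $f(x^k)\to f(\ox)<f(\ox)+\ve$, and $x^k+t_kw^k\to\ox$, for all large $k$ we have $(x^k,v^k)\in(\gph\sub f)\cap(U_\ve\times V)$ and $x^k+t_kw^k\in U$. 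Applying the inequality in (b) with $x=x^k$, $v=v^k$, $x'=x^k+t_kw^k$ and dividing by $\frac12 t_k^2$ yields $\Delta_{t_k}^2 f(x^k,v^k)(w^k)\ge\kappa\|w^k\|^2$ for all large $k$, hence $\liminf_k\Delta_{t_k}^2 f(x^k,v^k)(w^k)\ge\kappa\|w\|^2$. As this holds along every admissible sequence, $\d^2_s f(\ox,\ov)(w)\ge\kappa\|w\|^2>0$; in particular this argument produces the quantitative bound $\d^2_s f(\ox,\ov)(w)\ge\kappa\|w\|^2$.

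For the implication (a)$\Rightarrow$(b): argue by contradiction, assuming (b) fails. Applying its negation to the convex neighborhoods $\B_{1/k}(\ox)$ and $\B_{1/k}(\ov)$ and to the constants $\kappa=\ve=1/k$ produces, for each $k$, a pair $(x^k,v^k)\in\gph\sub f$ with $x^k\in\B_{1/k}(\ox)$, $v^k\in\B_{1/k}(\ov)$, $f(x^k)<f(\ox)+1/k$, together with a point $y^k\in\B_{1/k}(\ox)$ such that
$$
f(y^k)<f(x^k)+\la v^k,y^k-x^k\ra+\frac{1}{2k}\|x^k-y^k\|^2.
$$
Then $x^k\to\ox$, $v^k\to\ov$, $y^k\to\ox$, and lower semicontinuity of $f$ together with $f(x^k)<f(\ox)+1/k$ forces $f(x^k)\to f(\ox)$, so the pairs $(x^k,v^k)$ are admissible in the definition of $\d^2_s f(\ox,\ov)$. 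Set $t_k:=\|y^k-x^k\|$; since $y^k=x^k$ would reduce the displayed inequality to $f(x^k)<f(x^k)$, we have $t_k>0$ for large $k$, and $t_k\to 0$. Put $w^k:=(y^k-x^k)/t_k$, so $\|w^k\|=1$, and after passing to a subsequence assume $w^k\to w$ with $\|w\|=1$, hence $w\neq 0$. Rewriting the displayed inequality in terms of the difference quotient (using $\|x^k-y^k\|=t_k$) gives $\Delta_{t_k}^2 f(x^k,v^k)(w^k)<1/k$, so $\liminf_k\Delta_{t_k}^2 f(x^k,v^k)(w^k)\le 0$, and therefore $\d^2_s f(\ox,\ov)(w)\le 0$ by the definition of the strict second subderivative. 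Since $w\neq 0$, this contradicts (a), which completes the proof.

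The neighborhood bookkeeping (eventual inclusion of the sequences in $U_\ve\times V$ and of $x^k+t_kw^k$ in $U$) and the algebra of the difference quotient are routine. The only two points that require a little care are the use of lower semicontinuity of $f$ to upgrade $f(x^k)<f(\ox)+1/k$ into the convergence $f(x^k)\to f(\ox)$ needed for admissibility in the strict second subderivative, and the compactness extraction that forces the limiting direction $w$ to be nonzero; neither is a genuine obstacle, since this result is the ``strict'' counterpart of the classical characterization of quadratic growth and follows the same template with the $\ox$-based quantities replaced by their strict versions.
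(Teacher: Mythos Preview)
Your proposal is correct and follows essentially the same approach as the paper's own proof: (b)$\Rightarrow$(a) by bounding the second-order difference quotients from below along any admissible sequence, and (a)$\Rightarrow$(b) by contradiction via shrinking neighborhoods $\B_{1/k}(\ox)$, $\B_{1/k}(\ov)$ and constants $1/k$, normalizing $w^k=(y^k-x^k)/\|y^k-x^k\|$, and extracting a unit-length limit direction. If anything, your write-up is slightly more careful than the paper's (you explicitly justify $t_k>0$ and the use of lower semicontinuity to obtain $f(x^k)\to f(\ox)$), but the argument is the same.
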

\begin{proof} If (b) holds, pick $w\in \R^n\setminus \{0\}$ and  arbitrary sequences $t_k\searrow0$, $(x^k,v^k)\to (\ox,\ov)$ with  $f(x^k)\to f(\ox)$ and  $(x^k,v^k)\in \gph \sub f$, and $w^k\to w$.
It is easy to see that $\Delta_{t_k}^2 f(x^k , v^k)(w^k) \ge {\kappa}\|w^k\|^2$ for any $k$ sufficiently large. 
Passing to the limit proves (a). To prove the opposite implication, we can argue by contradiction that if (b) fails, then for any $k\in \N$, we find $(x^k,v^k)\in \gph \sub f$   such  that $(x^k,v^k)\to (\ox,\ov)$ and $f(x^k)< f(\ox)+1/k$ and $y^k\in \R^n$ such that $y^k\to \ox$
for which we have 
$$
f(y^k)<f(x^k)+\la v^k,y^k-x^k\ra+\frac{1}{2k}\|x^k-y^k\|^2.
$$
Since $f$ is lsc, we also get $f(x^k)\to f(\ox)$. Setting $t_k=\|x^k-y^k\|$ and $w^k=(y^k-x^k)/t_k$, assume by passing to a subsequence if necessary that $w^k\to w$ for some $w\in \R^n\setminus \{0\}$.
Combining these shows that $\Delta_{t_k}^2 f(x^k , v^k)(w^k) <1/k$ for any $k$. Passing to the limit and taking into account that $w\neq 0$ lead us to a contradiction with (a) and hence complete the proof. 
\end{proof}

Note that $U_\ve$ in Proposition~\ref{uqgc}(b) can be replaced with $U$ provided that $f$ is subdifferentially continuous at $\ox$ for $\ov$. 
It is also important to mention that the property in Proposition~\ref{uqgc}(a) can be expressed equivalently as the existence of a constant $\ell>0$ for which  $\d^2_s f(\ox,\ov)(w)\ge \ell \|w\|^2$ 
for any $w\in \R^n$. Looking into the proof of  Proposition~\ref{uqgc}, one can conclude that if (b) holds with constant $\kappa>0$, then we have $\d^2_s f(\ox,\ov)(w)\ge \kappa \|w\|^2$ for any $w\in \R^n$.
Conversely, if $\d^2_s f(\ox,\ov)(w)\ge \ell \|w\|^2$ holds, we can see that (b) is satisfied for any constant $\kappa\in (0,\ell)$.

 Our first goal in this section is to establish a chain rule for the strict second subderivative of the function $\ph$ in \eqref{CF}. This will tell us when strict twice epi-differentiability  should be expected for such a composite  function.
We begin with the following result in  which a chain rule for the second subderivative was obtained  for the composite form in \eqref{CF} under the SOQC \eqref{soqc}.

\begin{Proposition} \label{ssub}Assume that $\ph:\R^n\to \oR$ has the representation \eqref{CF} around $\ox\in \R^n$, $\ov\in \sub \ph(\ox)$, and that $\olm\in \Lm(\ox,\ov)$. If the SOQC \eqref{soqc} holds at $(\ox,\olm)$, then there exists 
a neighborhood $U$ of $(\ox,\ov)$ such that for any $(x,v)\in U\cap \gph \sub \ph$, we have 
$$
\d^2 \ph(x,v)(w)= \la \lm,\nabla^2 \Phi(x)(w,w)\ra+ \d^2g(\Phi(x),\lm)(\nabla \Phi(x)w),
$$
where $\lm$ is the unique element of the Lagrange multiplier set $\Lm(x,v)$ and where  $\d^2g(\Phi(x),\lm)=\dd_{K_g(\Phi(x),\,\lm)}$.
\end{Proposition}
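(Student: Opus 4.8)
The plan is to obtain the identity from three ingredients: the openness of the basic constraint qualification and the local single‑valuedness of $\Lm$ recorded in Proposition~\ref{soqcg}, the classical second‑order chain rule for fully amenable functions, and the elementary fact that the second subderivative of a polyhedral function is the indicator of the corresponding critical cone. First I would invoke Proposition~\ref{soqcg}: part (a) produces a neighborhood of $\ox$ on which \eqref{bcq} holds at every point, and part (b) produces a neighborhood $U$ of $(\ox,\ov)$ on which $\Lm(x,v)$ is a singleton for every $(x,v)\in U\cap\gph\sub\ph$. After shrinking $U$ so that $\Phi$ is twice continuously differentiable and \eqref{bcq} is in force throughout the $x$‑component of $U$, the composite function $\ph=g\circ\Phi$ is fully amenable at every such $x$; here one uses that a polyhedral $g$ is in particular convex piecewise linear‑quadratic and that \eqref{bcq} is exactly the constraint qualification appearing in the definition of amenability (recall that $\partial^\infty g(z)=N_{\dom g}(z)$ for convex $g$).

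Next I would fix an arbitrary $(x,v)\in U\cap\gph\sub\ph$ and let $\lm$ be the unique element of $\Lm(x,v)$, whose existence and uniqueness are guaranteed by Proposition~\ref{soqcg}(b). Full amenability of $\ph$ at $x$ makes it twice epi‑differentiable at $x$ for $v$, and the second‑order chain rule for such functions (cf.\ \cite{r89,rw} and \cite{mr,ms16}) computes $\d^2\ph(x,v)(w)$ as an extremization over $y\in\Lm(x,v)$ of $\la y,\nabla^2\Phi(x)(w,w)\ra+\d^2 g(\Phi(x),y)(\nabla\Phi(x)w)$; since $\Lm(x,v)=\{\lm\}$, this collapses to
\begin{equation*}
\d^2\ph(x,v)(w)=\la\lm,\nabla^2\Phi(x)(w,w)\ra+\d^2 g\big(\Phi(x),\lm\big)\big(\nabla\Phi(x)w\big),\qquad w\in\R^n .
\end{equation*}

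It then remains to verify that $\d^2 g(z,\mu)=\dd_{K_g(z,\mu)}$ for every $(z,\mu)\in\gph\sub g$, and to apply this with $(z,\mu)=(\Phi(x),\lm)$. This is standard for polyhedral $g$ (see e.g.\ \cite{rw,hjs}) and can also be checked directly: near $z$ the function $g$ equals the maximum of finitely many affine functions restricted to the polyhedral set $\dom g$, so for $\xi\in K_g(z,\mu)$ the difference quotient $\Delta_t^2 g(z,\mu)(\xi)$ vanishes for all small $t>0$ (the winning affine pieces along $[z,z+t\xi]$ agree with $\la\mu,\cdot\ra$ there), whereas for $\xi\notin K_g(z,\mu)$ one has, for $\xi'$ near $\xi$ and $t$ small, either $z+t\xi'\notin\dom g$ or $g(z+t\xi')-g(z)-t\la\mu,\xi'\ra\ge ct$ for a fixed $c>0$, which forces $\Delta_t^2 g(z,\mu)(\xi')\to+\infty$; taking the liminf over $t\searrow0$, $\xi'\to\xi$ yields $\d^2 g(z,\mu)=\dd_{K_g(z,\mu)}$. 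Substituting this into the displayed formula gives the assertion.

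The main obstacle is making the chain rule of the second paragraph legitimate at the generic nearby pair $(x,v)$ rather than merely at the reference pair $(\ox,\ov)$; this is precisely where Proposition~\ref{soqcg} is indispensable, since it upgrades the pointwise hypotheses at $(\ox,\olm)$ into uniform validity of \eqref{bcq} and of the single‑valuedness of $\Lm$ on a whole neighborhood. A secondary point worth checking is that the chain‑rule formula is available when $g$ is only polyhedral, which is fine because polyhedral functions form a subclass of the convex piecewise linear‑quadratic functions for which full amenability and its second‑order calculus are developed.
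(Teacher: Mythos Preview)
Your proposal is correct and follows essentially the same route as the paper: invoke Proposition~\ref{soqcg} to propagate the basic constraint qualification \eqref{bcq} and the single-valuedness of $\Lm(\cdot,\cdot)$ to a neighborhood of $(\ox,\ov)$, then apply the second-order chain rule for fully amenable functions (the paper cites \cite[Theorem~13.14]{rw}) together with the standard formula $\d^2 g(z,\mu)=\dd_{K_g(z,\mu)}$ for polyhedral $g$ (the paper cites \cite[Proposition~13.9]{rw}). Your additional sketch of a direct verification of the latter identity and your explicit remark that the extremization over $\Lm(x,v)$ collapses because this set is a singleton are fine elaborations, not departures from the paper's argument.
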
 
\begin{proof} By Proposition~\ref{soqcg}, we find  $\dd>0$ such that   for all $(x,v)\in \big(\gph \sub \ph\big)\cap \B_{\dd}(\ox,\ov)$ the Lagrange multiplier set 
  $ \Lm(x,v)$ is a singleton and that for all $x\in \B_\dd(\ox)$ the basic constraint qualification in   \eqref{bcq} holds. 
 Combining these and \cite[Theorem~13.14]{rw} immediately gives us  the claimed formula for the second subderivative of $\ph$ for any $(x,v)\in \big(\gph \sub \ph\big)\cap \B_{\dd}(\ox,\ov)$.
 The formula for the second subderivative of $g$ is taken from \cite[Proposition~13.9]{rw}. 
\end{proof}

\begin{Theorem}[chain rule for strict second subderivative]\label{chss}
Assume that $\ph:\R^n\to \oR$ has the representation \eqref{CF} around $\ox\in \R^n$, $\ov\in \sub \ph(\ox)$, and that $\olm\in \Lm(\ox,\ov)$. If the SOQC \eqref{soqc} holds at $(\ox,\olm)$, then 
we have 
$$
\d_s^2 \ph(\ox,\ov)(w)= \la \olm,\nabla^2 \Phi(\ox)(w,w)\ra+\d^2_s g(\Phi(\ox),\olm)(\nabla \Phi(\ox)w)
$$
with $\d^2_s g(\Phi(\ox),\olm)(u)=\dd_{\spann\{K_ g(\Phi(\ox),\olm)\}}(u)$ for all $u\in \R^m$.
\end{Theorem}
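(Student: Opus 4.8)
The plan is to prove the chain rule for the strict second subderivative by a two-sided estimate, exactly mirroring the structure one uses for the ordinary second subderivative but now tracking the extra variation in $(x,v)$ and $\lambda$. First I would fix notation: write $\oz=\Phi(\ox)$, and for $(x,v)\in(\gph\sub\ph)\cap U$ (with $U$ the neighborhood from Proposition~\ref{soqcg}) let $\lm=\lm(x,v)$ denote the unique Lagrange multiplier, which satisfies $\|\lm-\olm\|\le\ell(\|x-\ox\|+\|v-\ov\|)$ by \eqref{mslag}, so $\lm\to\olm$ along any sequence $(x,v)\to(\ox,\ov)$. By Proposition~\ref{phpro}(a), $\ph$ is subdifferentially continuous at $\ox$ for $\ov$, so the constraint $\ph(x)\to\ph(\ox)$ in the definition of $\d_s^2$ is automatic and need not be carried along. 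The target second subderivative decomposes via Proposition~\ref{ssub}: $\d^2\ph(x,v)(w')=\la\lm,\nabla^2\Phi(x)(w',w')\ra+\dd_{K_g(\Phi(x),\lm)}(\nabla\Phi(x)w')$.

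For the inequality $\d_s^2\ph(\ox,\ov)(w)\le\la\olm,\nabla^2\Phi(\ox)(w,w)\ra+\d_s^2 g(\oz,\olm)(\nabla\Phi(\ox)w)$, I would combine Lemma~\ref{prof} with Proposition~\ref{ssub}. Given the formula $\d^2_s g(\oz,\olm)(u)=\dd_{K_g(\oz,\olm)-K_g(\oz,\olm)}(u)$ we want on the right, it suffices to produce sequences $(x^k,v^k)\toset_{\gph\sub\ph}(\ox,\ov)$ and $w^k\to w$ with $\d^2\ph(x^k,v^k)(w^k)\to\la\olm,\nabla^2\Phi(\ox)(w,w)\ra+\dd_{K_g(\oz,\olm)-K_g(\oz,\olm)}(\nabla\Phi(\ox)w)$, then invoke Lemma~\ref{prof}. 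If $\nabla\Phi(\ox)w\notin K_g(\oz,\olm)-K_g(\oz,\olm)$, the right-hand side is $+\infty$ and there is nothing to prove. If $\nabla\Phi(\ox)w\in K_g(\oz,\olm)-K_g(\oz,\olm)$, I would use the description \eqref{domgd}, $K_g(\oz,\olm)-K_g(\oz,\olm)=\dom D^*(\sub g)(\oz,\olm)$, together with the fact that the sets $C_j$ partition $\dom g$: one can move $z^k\to\oz$ into the relative interior of a face where the critical cone becomes exactly the linear span $K_g(\oz,\olm)-K_g(\oz,\olm)$, and correspondingly choose $\lm^k\in\ri\sub g(z^k)$; then $K_g(z^k,\lm^k)$ is a linear subspace containing $\nabla\Phi(x^k)w^k$ for suitable $x^k,w^k$ chosen from the metric regularity of $w\mapsto\nabla\Phi(x)w-K_g(\Phi(x),\lm)$ (Proposition~\ref{mrcon}), making the indicator term vanish; the smooth term converges by continuity of $\nabla^2\Phi$.

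For the reverse inequality $\d_s^2\ph(\ox,\ov)(w)\ge\la\olm,\nabla^2\Phi(\ox)(w,w)\ra+\dd_{K_g(\oz,\olm)-K_g(\oz,\olm)}(\nabla\Phi(\ox)w)$, I would take arbitrary sequences $t_k\searrow0$, $w^k\to w$, $(x^k,v^k)\toset_{\gph\sub\ph}(\ox,\ov)$ realizing the liminf defining $\d_s^2\ph(\ox,\ov)(w)$, i.e.\ with $\Delta_{t_k}^2\ph(x^k,v^k)(w^k)\to\d_s^2\ph(\ox,\ov)(w)=:\beta$. Using the exact second-order expansion of $\Phi$ around $x^k$ (Taylor with the $C^2$ remainder) and the polyhedrality of $g$ — so that $g$ agrees near $\Phi(x^k)$ with a piecewise linear function and $\d^2 g(\Phi(x^k),\lm^k)=\dd_{K_g(\Phi(x^k),\lm^k)}$ — one gets $\Delta_{t_k}^2\ph(x^k,v^k)(w^k)\ge\la\lm^k,\nabla^2\Phi(x^k)(w^k,w^k)\ra+o(1)$ whenever the relevant point lies in the right cone, and $=+\infty$ otherwise; the point is to show that if $\beta<\infty$ then necessarily $\nabla\Phi(\ox)w\in K_g(\oz,\olm)-K_g(\oz,\olm)$ in the limit (not merely in $K_g(\oz,\olm)$), because the cones $K_g(\Phi(x^k),\lm^k)$ are allowed to be arbitrary polyhedral cones whose differences are all contained in $\dom D^*(\sub g)(\Phi(x^k),\lm^k)$, which is inner-semicontinuous into $K_g(\oz,\olm)-K_g(\oz,\olm)$ by \eqref{code2} and \eqref{mslag}. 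Then $\beta\ge\liminf\la\lm^k,\nabla^2\Phi(x^k)(w^k,w^k)\ra=\la\olm,\nabla^2\Phi(\ox)(w,w)\ra$ by continuity.

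I expect the main obstacle to be the second inequality, specifically controlling the geometry of the varying critical cones $K_g(\Phi(x^k),\lm^k)$: one must show that boundedness of $\Delta_{t_k}^2\ph$ forces the limiting direction $\nabla\Phi(\ox)w$ into the \emph{subspace} $K_g(\oz,\olm)-K_g(\oz,\olm)$ rather than just the cone $K_g(\oz,\olm)$, which is the genuinely new phenomenon distinguishing the strict second subderivative from the ordinary one. The cleanest route is probably to first reduce, via Corollary~\ref{redu2}, to the case where $\nabla\Phi(\ox)$ (equivalently $\nabla\Psi(\ox)$) has full rank, in which case $K_g(z,\lm)$ for $(z,\lm)$ near $(\oz,\olm)$ ranges precisely over faces of a fixed polyhedral fan and $\nabla\Phi(x)w\in K_g(\Phi(x),\lm)$ for a sequence with $\Delta_{t_k}^2\ph$ bounded pins down, by a finiteness/locally-finite-fan argument combined with the metric estimates of Proposition~\ref{mrcon}, that $\nabla\Phi(\ox)w$ lies in the lineality we want; transferring back through $B$ (which has full rank, $\ker B=(\para\{\sub g(\oz)\})^\perp$) recovers the stated formula, since the indicator of $K_g(\oz,\olm)-K_g(\oz,\olm)$ is unaffected by the reduction.
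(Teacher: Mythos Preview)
Your plan has the right ingredients but reverses where the real work is. In the paper's argument, the inequality $\d_s^2\ph(\ox,\ov)(w)\ge\langle\olm,\nabla^2\Phi(\ox)(w,w)\rangle+\d_s^2 g(\oz,\olm)(\nabla\Phi(\ox)w)$ is the \emph{easy} direction: after passing to the full-rank representation $\ph=\vartheta\circ\Psi$ of Corollary~\ref{redu2}, one simply Taylor-expands the difference quotient exactly,
$$
\Delta_{t_k}^2\ph(x^k,v^k)(w^k)=\Delta_{t_k}^2\vartheta(\Psi(x^k),\mu^k)(u^k)+\langle\mu^k,\nabla^2\Psi(x^k)(w^k,w^k)\rangle+o(1),
$$
with $u^k\to\nabla\Psi(\ox)w$, and takes the liminf; the first term is bounded below by $\d_s^2\vartheta(\Psi(\ox),\bar\mu)(\nabla\Psi(\ox)w)$ \emph{by definition} of the strict second subderivative, and the formula $\d_s^2\vartheta=\dd_{K_\vartheta-K_\vartheta}$ is quoted as a known fact from the polyhedral theory. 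No separate argument that boundedness of $\Delta_{t_k}^2\ph$ forces $\nabla\Phi(\ox)w$ into the span, no inner-semicontinuity of $\dom D^*(\sub g)$, and no finite-fan argument is needed.

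The genuine obstacle is the upper bound $\le$, and there your outline has a gap. Having chosen $(z^k,\lm^k)\to(\oz,\olm)$ in $\gph\sub g$ with $K_g(z^k,\lm^k)$ equal to the span $K_g(\oz,\olm)-K_g(\oz,\olm)$, you must produce $x^k\to\ox$ with $(x^k,\nabla\Phi(x^k)^*\lm^k)\in\gph\sub\ph$, i.e.\ with $\Phi(x^k)$ landing on the face so that $\lm^k\in\sub g(\Phi(x^k))$. Proposition~\ref{mrcon} does \emph{not} provide this: it gives metric regularity of $w\mapsto\nabla\Phi(x)w-K_g(\Phi(x),\lm)$, which only lets you adjust $w^k$ once $x^k$ is fixed. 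What is needed is metric regularity of the outer map itself near $\ox$, and this is precisely why the paper performs the reduction of Corollary~\ref{redu2} \emph{first} --- with $\nabla\Psi(\ox)$ full rank, Lyusternik--Graves supplies $x^k$ with $\Psi(x^k)=y^k$ exactly, after which Proposition~\ref{mrcon} finishes the job of finding $w^k$. Without this step your construction of the sequence $(x^k,v^k)$ realizing the upper bound does not go through; you have invoked the full-rank reduction only in the paragraph on $\ge$, where it is not actually required.
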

\begin{proof} The given formula for the strict second subderivative of the polyhedral function $g$ was recently obtained in \cite[Proposition~4.1]{hjs}.
To justify the claimed formula for the strict second subderivative of $\ph$, take the neighborhood $O$ from Corollary~\ref{redu2} on which $\ph$ can be represented as  $\ph=\vth\circ \Psi$ with 
$\nabla \Psi(\ox)$ having full rank. Similar to \eqref{laset}, define the set of Lagrange multipliers at $(x,v)\in \gph \sub \ph$ associated with the latter representation of $\ph$ by 
$$
\Gamma(x,v)=\big\{\mu\in \R^s|\; \nabla \Psi(x)^*\mu=v, \; \mu \in \sub \vth(\Psi(x))\big\},
$$
where $s$ is taken from Corollary~\ref{redu2}. Note that the corresponding SOQC for the composite function $\vth\circ \Psi$ clearly holds since $\nabla \Psi(\ox)$ has full rank. 
When $(x,v)\in \gph \sub \ph$   is sufficiently close to $(\ox,\ov)$, it follows from Proposition~\ref{soqcg}(b) that $\Gamma(x,v)$ is a singleton 
and that $\|\mu-\bar\mu\|=O(\|x-\ox\|+\|v-\ov\|)$ with $\mu \in \Gamma(x,v)$ and $\bar\mu\in \Gamma(\ox,\ov)$. Pick $w\in \R^n$ and then take  arbitrary sequences $t_k\searrow0$, $(x^k,v^k)\to (\ox,\ov)$ with  $(x^k,v^k)\in \gph \sub \ph$, and $w^k\to w$ such that
$\Delta^2_{t_k}\ph(x^k,v^k)(w^k) \to \d_s^2\ph(\ox,\ov)(w)$ as $k\to\infty$.
Let $\mu^k\in \Gamma(x^k,v^k)$, set $u^{k}:=(\Psi(x^k+t_kw^k)-\Psi(\xk))/t_k$, and observe that  
\begin{align*}
\Delta^2_{t_k}\ph(x^k,v^k)(w^k)=&\;\dfrac{\vth\big(\Psi(x^k+t_kw^k)\big)-\vth\big(\Psi(x^k)\big)-t_k\langle\nabla\Psi(x^k)^*\mu^k,w^k\rangle}{\frac{1}{2}t_k^2}\\
=&\; \dfrac{\vth\big(\Psi(x^k)+t_ku^k\big)-\vth\big(\Psi(x^k)\big)-t_k\langle\mu^k,u^k\rangle}{\frac{1}{2}t_k^2}\\
&+ \dfrac{\langle\mu^k,\Psi(x^k+t_kw^k)-\Psi(\xk)-t_k\nabla\Psi(x^k)w^k\rangle}{\frac{1}{2}t_k^2}\\
=&\;\Delta^2_{t_k} \vth(\Psi(x^k),\mu^k)(u^k)+ \la \mu^k,\nabla^2\Psi(x^k)(w^k,w^k)\ra+ \dfrac{o(t^2_k)}{t_k^2}.
\end{align*}
Passing to the limit and taking into account that $\mu^k\to \bar\mu$ due to the discussion above  bring us to 
\begin{align}\label{ineq1}
\d_s^2\ph(\ox,\ov)(w)&\ge   \langle \bar\mu, \nabla^2\Psi(\ox) ( w,w)\rangle+\d_s^2\vth(\Psi(\ox),\bar\mu) (\nabla\Psi(\ox)w )\\
&= \langle \bar\mu, \nabla^2\Psi(\ox) ( w,w)\rangle+\dd_{\spann\{K_ \vth(\Psi(\ox),\bar\mu)\}} (\nabla\Psi(\ox)w )\nonumber,
\end{align}
where the last equality stems from the fact that $\vth$ is a polyhedral function. Clearly, we get equality if $\nabla\Psi(\ox)w \notin \spann\{K_ \vth(\Psi(\ox),\bar\mu)\}$.
We proceed to justify that  the opposite inequality in \eqref{ineq1} for any $w\in \R^n$ such that $\nabla\Psi(\ox)w \in \spann\{K_ \vth(\Psi(\ox),\bar\mu)\}$.
To this end,  it follows from \cite[Proposition~3.3]{hjs} that there exists a sequence  $\{(y^k,\mu^k)\b\subset \gph \sub \vth$ such that $(y^k,\mu^k)\to (\Psi(\ox),\bar\mu)$  
and that $\spann\{K_ \vth(\Psi(\ox),\bar\mu)\}=K_{\vth}(y^k,\mu^k)$ for any $k$.
In particular, we have $\nabla\Psi(\ox) w \in K_{\vth}(y^k,\mu^k)$ for all $k$. Since $\nabla \Psi(\ox)$ has full rank, we conclude from the classical Lyusternik-Graves Theorem (cf. \cite[Corollary~3.8]{m06})
that $\Psi$ is metrically regular  at $\ox$ for $\Psi(\ox)$. This implies that there exists a constant $\ell\ge 0$ such that the estimate 
$$
\dist (\ox, \Psi^{-1}(y^k))\le \ell\, \|\Psi(\ox)-y^k\|
$$
holds for all $k$ sufficiently large. Thus, we find $x^k\in \R^n$ such that $\Psi(x^k)=y^k$ and $x^k\to \ox$. The former yields $K_{\vth}(y^k,\mu^k)=K_{\vth}(\Psi(x^k),\mu^k)$. We can also 
use Proposition~\ref{mrcon} to conclude that the mappings $G_k(u):=\nabla \Psi(x^k)u-K_{\vth}(\Psi(x^k),\mu^k)$ are globally metrically regular  with a uniform constant $\kappa> 0$.
This brings us to the estimate 
\begin{align*}
\dist(w,G^{-1}_k(0))&\le \kappa\, \dist(0,G_k(w))=\kappa\, \dist\big(\nabla \Psi(x^k)w,K_{\vth}(\Psi(x^k),\mu^k)\big)\\
&\le \kappa\,\|\nabla \Psi(x^k)w-\nabla \Psi(\ox)w\| \le \kappa\,\|\nabla \Psi(x^k)-\nabla \Psi(\ox)\|\|w\|,
\end{align*}
where the penultimate step results from the fact that $\nabla\Psi(\ox)w \in K_{\vth}(\Psi(x^k), \mu^k)$.
This allows us to find $w^k\in G^{-1}_k(0)$ such that $ w^k\to w$. So, we have $\nabla \Psi(x^k)w^k\in K_{\vth}(\Psi(x^k),\mu^k)$ for all $k$ sufficiently large. 
Using Lemma~\ref{prof} and the given formula for the second subderivative of $\ph$ at $x^k$ for $v^k$ in Proposition~\ref{ssub}, we arrive at
\begin{align*}
\d_s^2\ph(\ox,\ov)(w)&\le \liminf_{k\to \infty}\d^2\ph(x^k,v^k)(w^k)\\
&=  \liminf_{k\to \infty}\langle  \mu^k, \nabla^2\Psi(x^k) ( w^k,w^k)\rangle+\d^2\vth(\Psi(x^k), \mu^k) (\nabla\Psi(x^k)w^k)\\
&=  \liminf_{k\to \infty} \langle  \mu^k, \nabla^2\Psi(x^k) ( w^k,w^k)\rangle+\dd_{K_\vth(\Psi(x^k),\, \mu^k)} (\nabla\Psi(x^k)w^k)\\
&=  \liminf_{k\to \infty} \langle  \mu^k, \nabla^2\Psi(x^k) ( w^k,w^k)\rangle= \langle \bar\mu, \nabla^2\Psi(\ox) ( w,w)\rangle\\
 &=\langle \bar\mu, \nabla^2\Psi(\ox) ( w,w)\rangle+\delta_{\spann\{K_ \vth(\Psi(\ox),\,\bar\mu)\}}(\nabla\Psi(\ox)w ).
\end{align*}
Combining this estimate and \eqref{ineq1} gives us 
\begin{equation}\label{chr1}
\d_s^2\ph(\ox,\ov)(w)=\langle \bar\mu, \nabla^2\Psi(\ox) ( w,w)\rangle+\d_s^2\vth(\Psi(\ox),\bar\mu) (\nabla\Psi(\ox)w ).
\end{equation}
 { To achieve the same result for the representation \eqref{CF}, recall first from Corollary~\ref{redu2} that $\Psi=B\circ \Phi$ and from Proposition~\ref{redu}  that  
$g$ can be  represented locally around $\Phi(\ox)$ as $g=\vth\circ B$. By the chain rule   from \cite[Theorem~10.6]{rw} and $B$ having full rank, we arrive at $\sub g(\Phi(\ox))=B^*\sub \vth(\Psi(\ox))$.
Thus, it follows  from $\nabla \Psi(\ox)=B\nabla \Phi(\ox)$ and $\bar\mu\in \Gamma(\ox,\ov)$     that $\olm:=B^*\bar\mu\in \Lm(\ox,\ov)$.  Considering the mapping 
$x\mapsto \la \olm, \Phi(x)\ra=  \la \bar \mu, \Psi(x)\ra$, we get   
\begin{equation}\label{chr1.1}
\la \bar\mu, \nabla^2 \Psi(\ox)(w,w)\ra=\la \olm, \nabla^2 \Phi(\ox)(w,w)\ra\quad \textrm{ for any }\quad w\in \R^n.
\end{equation}  }
 Since $B$ has full rank, we can apply \eqref{chr1} to the   representation $g=\vth\circ B$ and conclude that 
$$
\d_s^2g(\Phi(\ox),\olm) (\nabla\Phi(\ox)w )= \d_s^2\vth(\Psi(\ox),\bar\mu) (\nabla\Psi(\ox)w ).
$$
This, together with \eqref{chr1} and \eqref{chr1.1}, implies that 
\begin{equation*}
\d_s^2\ph(\ox,\ov)(w) = \langle  \olm, \nabla^2\Phi(\ox) ( w,w)\rangle+ \d_s^2g(\Phi(\ox),\olm) (\nabla\Phi(\ox)w ),
\end{equation*}
which proves the claimed formula for the strict second subderivative of $\ph$ and hence completes the proof.
\end{proof}

As an immediate consequence of the established chain rule for the strict second subderivative, we can achieve a useful characterization of a property 
called the   variational strong convexity.

\begin{Corollary}\label{vsc} Assume that $\ph:\R^n\to \oR$ has the representation \eqref{CF} around $\ox\in \R^n$, $\ov\in \sub \ph(\ox)$, and that $\olm\in \Lm(\ox,\ov)$. If the SOQC \eqref{soqc} holds at $(\ox,\olm)$, then
the following properties are equivalent:
\begin{itemize}[noitemsep,topsep=2pt]
\item [ \rm {(a)}]  $ \la \olm,\nabla^2 \Phi(\ox)(w,w)\ra >0$ for all $w\in \R^n\setminus\{0\}$ with  $\nabla\Phi(\ox)w\in \spann\{K_ g(\Phi(\ox),\olm)\}$;
\item [ \rm {(b)}]     $\ph$ is  variationally strongly convex at $\ox$ for $\ov$, meaning that there exist convex neighborhoods $U$ of $\ox$ and $V$ of $\ov$ and  an lsc convex function $h$ such that 
$(\gph \sub \ph)\cap (U\times V)=(\gph \sub h)\cap (U\times V)$ and that $h(x)\le \ph(x)$ for any $x\in U$ but $\ph(x)=h(x)$ for any $(x,v)\in (\gph \sub \ph)\cap (U\times V)$.
 \end{itemize}
\end{Corollary}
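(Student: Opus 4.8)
The plan is to derive Corollary~\ref{vsc} from the chain rule for the strict second subderivative (Theorem~\ref{chss}) combined with the characterization of the uniform quadratic growth condition in Proposition~\ref{uqgc} and Rockafellar's equivalence between uniform quadratic growth and variational strong convexity. First I would invoke Theorem~\ref{chss} to rewrite, for all $w\in\R^n$,
$$
\d_s^2 \ph(\ox,\ov)(w)= \la \olm,\nabla^2 \Phi(\ox)(w,w)\ra+\dd_{K_ g(\Phi(\ox),\olm)-K_ g(\Phi(\ox),\olm)}(\nabla \Phi(\ox)w).
$$
From this identity it is immediate that the condition ``$\d_s^2\ph(\ox,\ov)(w)>0$ for all $w\in\R^n\setminus\{0\}$'' is equivalent to statement (a): indeed, if $\nabla\Phi(\ox)w\notin K_g(\Phi(\ox),\olm)-K_g(\Phi(\ox),\olm)$ the indicator term is $+\infty$ so positivity is automatic, while if $\nabla\Phi(\ox)w$ does lie in that subspace the indicator term vanishes and $\d_s^2\ph(\ox,\ov)(w)=\la\olm,\nabla^2\Phi(\ox)(w,w)\ra$, so positivity for all nonzero such $w$ is exactly (a). (One must note that $w=0$ always gives $\d_s^2\ph(\ox,\ov)(0)=0$, which is consistent with positive homogeneity of degree $2$ and does not enter the condition.)

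Next I would connect positivity of the strict second subderivative to uniform quadratic growth. By Proposition~\ref{uqgc}, the condition $\d_s^2\ph(\ox,\ov)(w)>0$ for all $w\neq 0$ is equivalent to the existence of convex neighborhoods $U$ of $\ox$, $V$ of $\ov$ and constants $\kappa,\ve>0$ such that
$$
\ph(x')\ge \ph(x)+\la v,x'-x\ra+\frac{\kappa}{2}\|x-x'\|^2\quad\mbox{for all}\;\;(x,v)\in(\gph\sub\ph)\cap(U_\ve\times V),\;x'\in U.
$$
Here I would use that $\ph$, under the SOQC, satisfies the basic constraint qualification \eqref{bcq} at $\ox$ (Proposition~\ref{soqcg}(a)), hence by Proposition~\ref{phpro}(a) is prox-regular and subdifferentially continuous at $\ox$ for $\ov$; the remark after Proposition~\ref{uqgc} then lets me replace $U_\ve$ by $U$, giving the uniform quadratic growth condition in the form of \cite[Definition~5.16]{bs}. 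This is precisely the property already referenced in the text as being equivalent, for lsc functions, to variational strong convexity in the sense of \cite[Definition~2]{r17}; invoking that equivalence closes the loop with statement (b).

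The main obstacle is making the last equivalence airtight, i.e.\ checking that the hypotheses under which Rockafellar's theorem equates uniform quadratic growth with variational strong convexity are met here — primarily that $\ph$ is lsc near $\ox$ and prox-regular/subdifferentially continuous at $\ox$ for $\ov$, which again follows from Proposition~\ref{soqcg}(a) and Proposition~\ref{phpro}(a). I would also take care with the direction of the neighborhoods: the convex neighborhoods furnished by Proposition~\ref{uqgc} may need shrinking to match those required in the definition of variational strong convexity, but since the graphical localization $(\gph\sub\ph)\cap(U\times V)$ and the inequality $\ph\ge h$ are stable under shrinking, this is routine. A minor point worth stating explicitly is that in (a) it suffices to quantify over $w$ with $\nabla\Phi(\ox)w$ in the relevant subspace because outside it the strict second subderivative is trivially $+\infty$; I would remark on this to justify why (a) is phrased with that restriction rather than over all $w\neq 0$.
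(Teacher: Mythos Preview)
Your proposal is correct and follows essentially the same route as the paper: apply the chain rule of Theorem~\ref{chss} to reduce (a) to $\d_s^2\ph(\ox,\ov)(w)>0$ for all $w\neq 0$, invoke Proposition~\ref{uqgc} to pass to the uniform quadratic growth condition, and then use Rockafellar's equivalence \cite[Theorem~2]{r17} with variational strong convexity. The only detail the paper makes explicit that you leave implicit is that \cite[Theorem~2]{r17} requires $\ov\in\rs\ph(\ox)$, which holds here because $\rs\ph(\ox)=\sub\ph(\ox)$ under the SOQC (Proposition~\ref{phpro}(c)); you might add this one line when checking the hypotheses of Rockafellar's result.
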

\begin{proof} According to \cite[Theorem~2]{r17}, if $\ov\in \rs \ph(\ox)$,  the variational strong convexity of  $\ph$ at $\ox$ for $\ov$ is equivalent to the uniform quadratic growth condition in Proposition~\ref{uqgc}(b)
with $f=\ph$.  The latter amounts to $\d_s^2\ph(\ox,\ov)(w)>0$ for all $w\in \R^n\setminus\{0\}$ due to  Proposition~\ref{uqgc}. Using the established chain rule in Theorem~\ref{chss} 
and taking into account that $\rs\ph (\ox)=\sub \ph(\ox)$ because of the SOQC  confirm that (a) and (b) are equivalent. 
\end{proof}

\begin{Remark} \label{tilts} One can use Corollary~\ref{vsc} to find a characterization of tilt-stable local minimizers of the composite optimization problem 
\begin{equation}\label{cp}
\mini \varphi(x) \quad\textrm{subject to} \; \; x\in \R^n.
\end{equation}
To elaborate more, consider a function $\ph$ with the representation \eqref{CF} around $\ox$. Recall that $\ox$ is said to be a tilt-stable local minimizer of $\ph$
if there exist neighborhoods $U$ of $\ox$ and $V$ of $\ov=0$ such that  the mapping 
$$
v\mapsto \argmin_{x\in U}\{\ph(x)-\la v,x-\ox\ra\}
$$
is single-valued and Lipschitz continuous on $V$ and its value at $\ov$ is $\{\ox\}$. Combining  \cite[Theorem~2]{r17} and \cite[Theorem~1.3]{pr1}   tells us that 
$\ox$ is   a tilt-stable local minimizer of $\ph$ if and only if $\ph$ is variationally strongly convex at $\ox$ for $\ov$. If the SOQC \eqref{soqc} holds at $(\ox,\olm)$, where $\olm$ is the unique element in $ \Lm(\ox,\ov)$, we conclude 
from Corollary~\ref{vsc} that   $\ox$ is   a tilt-stable local minimizer of $\ph$ if and only if we have 
$$
 \la \olm,\nabla^2 \Phi(\ox)(w,w)\ra >0\quad \mbox{ for all}\;\;  w\in \R^n\setminus\{0\}\;\;\mbox{with}\;\nabla\Phi(\ox)w\in \spann\{K_ g(\Phi(\ox),\olm)\},
 $$ a condition which was 
traditionally called   the {\em strong second-order sufficient condition}.  This characterization of tilt-stable local minimizers of \eqref{cp} was previously observed in \cite{ms16} using a different approach, relying mainly on the concept of 
coderivative. 
\end{Remark}

Next, we recall a characterization of strict twice epi-differentiability, which was established in \cite[Corollary~4.3]{pr2}. 
\begin{Proposition}[characterization of strict twice epi-differentiability] \label{sted}
Assume that $\ph$ has the representation \eqref{CF} around $\ox\in \R^n$ and    $ \ov\in  \sub \ph(\ox)$ and that the basic constraint qualification \eqref{bcq1} holds at $\ox$.
Then there is a neighborhood  $U$ of $(\ox,\ov)$ such that for any $(x,v)\in U\cap\, \gph \ph$, the following properties are equivalent:
\begin{itemize}[noitemsep,topsep=2pt]
\item [ \rm {(a)}] $\ph$ is strictly twice epi-differentiable at $x$ for $v$;
\item [ \rm {(b)}] $\d^2 \ph(x,v)$ epi-converges \rm{(}to something\rm{)}  as $(x',v')\to (x,v)$  with $v'\in \sub \ph(x')$.
\end{itemize}
\end{Proposition}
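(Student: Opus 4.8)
The plan is to recast both (a) and (b) as statements about Painlev\'e--Kuratowski limits of the epigraphs of the second-order difference quotients $\Delta_t^2\ph(x',v')$, and then to read the equivalence as the interchange of an iterated limit with a joint one. First I would shrink $U$ so that the basic constraint qualification \eqref{bcq} holds at every $x$ occurring in a pair $(x,v)\in U$ with $v\in\sub\ph(x)$; this is legitimate because, $\dom g$ being polyhedral, one has $N_{\dom g}(\Phi(x))\subset N_{\dom g}(\Phi(\ox))$ for $x$ near $\ox$, so a short contradiction argument of the kind used in the proof of Proposition~\ref{soqcg} shows \eqref{bcq} is stable. On this $U$, Proposition~\ref{phpro}(a) gives that $\ph$ is prox-regular and subdifferentially continuous at every such $(x,v)$ --- in particular subdifferential continuity lets me drop the requirement $\ph(x')\to\ph(x)$ everywhere below --- and, $\ph=g\circ\Phi$ being fully amenable at every such $x$, \cite[Theorem~13.14]{rw} shows $\ph$ is twice epi-differentiable at $x$ for $v$, i.e. $\epi\Delta_t^2\ph(x,v)\to\epi\d^2\ph(x,v)$ in the Painlev\'e--Kuratowski sense as $t\searrow0$. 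Recalling that for closed subsets of a Euclidean space this convergence is equivalent to pointwise convergence of the associated distance functions, condition (a) becomes: $\epi\Delta_t^2\ph(x',v')$ converges to a common closed set as $(t,x',v')\to(0^+,x,v)$ along $\gph\sub\ph$, while (b) becomes: $\epi\d^2\ph(x',v')$ converges as $(x',v')\to(x,v)$ along $\gph\sub\ph$ --- each $\epi\d^2\ph(x',v')$ being, by definition of the second subderivative, the inner limit $\liminf_{t\searrow0}\epi\Delta_t^2\ph(x',v')$, which under our hypotheses is the full limit.

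For (a)$\Rightarrow$(b) I would run the standard ``iterated limit from joint limit'' diagonalization. Assuming $\epi\Delta_t^2\ph(x',v')\to S:=\epi\d^2\ph(x,v)$ jointly, fix $(x^j,v^j)\to(x,v)$ in $\gph\sub\ph$ and set $S_j:=\epi\d^2\ph(x^j,v^j)$. For $S\subset\liminf_j S_j$: if some $z\in S$ had $\dist(z,S_j)\ge\ve>0$ along a subsequence, then since $\dist(z,\epi\Delta_t^2\ph(x^j,v^j))\to\dist(z,S_j)$ as $t\searrow0$ one could choose $t_j\searrow0$ with $\dist(z,\epi\Delta_{t_j}^2\ph(x^j,v^j))\ge\ve/2$, contradicting joint convergence to $S\ni z$. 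For $\limsup_j S_j\subset S$: given $z^j\in S_j$ with $z^j\to z$, use $S_j=\liminf_{t\searrow0}\epi\Delta_t^2\ph(x^j,v^j)$ to pick $t_j\searrow0$ and $\tilde z^j\in\epi\Delta_{t_j}^2\ph(x^j,v^j)$ with $\|\tilde z^j-z^j\|\le1/j$, so $\tilde z^j\to z$ and joint convergence gives $z\in S$. Hence $\epi\d^2\ph(x^j,v^j)\to S$, which is (b) and identifies its limit function as $\d^2\ph(x,v)$.

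For the converse (b)$\Rightarrow$(a) I would first dispose of the easy half. Writing $q$ for the epi-limit of $\d^2\ph(x',v')$ and $S_0:=\epi q$, Lemma~\ref{prof} gives $\d_s^2\ph(x,v)\le q$, i.e. $\epi\d_s^2\ph(x,v)\supset S_0$; since $\epi\d_s^2\ph(x,v)$ is, by definition, the inner limit of $\epi\Delta_t^2\ph(x',v')$ along $(t,x',v')\to(0^+,x,v)$, the inner part of the desired joint limit already dominates $S_0$. What remains is the outer inclusion $\limsup_{(t,x',v')\to(0^+,x,v)}\epi\Delta_t^2\ph(x',v')\subset S_0$, and this is the only place where I expect real work: one cannot merely combine the one-parameter convergences $\epi\Delta_t^2\ph(x',v')\to\epi\d^2\ph(x',v')$ ($t\searrow0$), because an iterated set-limit need not be a joint one without uniformity in $(x',v')$. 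The uniformity has to come from prox-regularity: the inequality \eqref{prox}, holding with common constants on $U$, produces the equi-lower bound $\Delta_t^2\ph(x',v')(w')\ge-\rho\|w'\|^2$ for all small $t$, all $w'$, and all $(x',v')\in\gph\sub\ph$ near $(x,v)$, and, crucially, forces the epi-convergence $\epi\Delta_t^2\ph(x',v')\to\epi\d^2\ph(x',v')$ to be locally uniform in $(x',v')$; equivalently, via the Poliquin--Rockafellar correspondence between twice epi-differentiability of a prox-regular, subdifferentially continuous function and proto-differentiability of its subgradient mapping, condition (b) is precisely the assertion that the tangent cones $T_{\gph\sub\ph}(x',v')$ converge as $(x',v')\to(x,v)$ --- i.e. that $\sub\ph$ is strictly proto-differentiable at $(x,v)$ --- which is equivalent to (a). Either way, one concludes that for $(t_j,x^j,v^j)\to(0^+,x,v)$ and $z^j\in\epi\Delta_{t_j}^2\ph(x^j,v^j)$ with $z^j\to z$ one has $\dist(z^j,\epi\d^2\ph(x^j,v^j))\to0$, whence $z\in S_0$, completing the joint convergence.

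The main obstacle, then, is precisely this uniformity step in (b)$\Rightarrow$(a): upgrading the family of pointwise-in-$(x',v')$ epi-convergences $\epi\Delta_t^2\ph(x',v')\to\epi\d^2\ph(x',v')$ to a single joint one. This is exactly where the hypothesis \eqref{bcq} --- through the resulting prox-regularity of $\ph$, or equivalently through the reduction to strict proto-differentiability of $\sub\ph$ --- cannot be dispensed with.
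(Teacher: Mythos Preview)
The paper's own proof of this proposition is essentially a citation: one checks (via Proposition~\ref{phpro}(a) and \cite[Theorem~13.14]{rw}) that $\ph$ is prox-regular, subdifferentially continuous, and twice epi-differentiable at every $(x,v)\in\gph\sub\ph$ near $(\ox,\ov)$, and then invokes \cite[Corollary~4.3]{pr2}, which already gives the equivalence (a)$\Leftrightarrow$(b) for prox-regular functions. You carry out exactly this verification and your direct diagonalization for (a)$\Rightarrow$(b) is correct, so on that half you are simply elaborating what the paper leaves inside the citation.

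Your argument for (b)$\Rightarrow$(a), however, has a genuine gap. The strict second subderivative $\d_s^2\ph(x,v)$ is the \emph{lower epi-limit} of the family $\Delta_t^2\ph(x',v')$, so $\epi\d_s^2\ph(x,v)$ equals the \emph{outer} (limsup) set-limit of $\epi\Delta_t^2\ph(x',v')$, not the inner one. Consequently, Lemma~\ref{prof} only gives $\limsup_{(t,x',v')}\epi\Delta_t^2\ph(x',v')=\epi\d_s^2\ph(x,v)\supset S_0$, which is the opposite of what you need; it does not show that the inner limit dominates $S_0$. In fact \emph{both} nontrivial inclusions, $\limsup\subset S_0$ and $\liminf\supset S_0$, require the uniformity you allude to --- the equi-lower bound $\Delta_t^2\ph(x',v')(w')\ge -\rho\|w'\|^2$ from prox-regularity is necessary but by itself not sufficient to upgrade the one-parameter epi-convergences to a joint one. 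Your fallback route through the proto-/epi-differentiability correspondence for prox-regular functions is precisely the content of \cite[Corollary~4.3]{pr2}; invoking it is not circular, but it is exactly the paper's proof. So in the end your (b)$\Rightarrow$(a) either cites \cite{pr2} (as the paper does) or leaves the uniformity step open; it does not provide an independent argument.
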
 
\begin{proof} 
By  Proposition~\ref{phpro}(a), $\ph$ is  prox-regular and subdifferentially continuous
at  $\ox$ for $\ov$. Moreover, it follows from \cite[Theorem~13.14]{rw}, $\ph$ is twice epi-differentiable at $x$ for $v\in \sub \ph(x)$ whenever $(x,v)$ is so close to $(\ox,\ov)$ that \eqref{bcq} also holds at $x$.
The claimed equivalence is a special case  of a more general result in  \cite[Corollary~4.3]{pr2}, obtained  for prox-regular functions.
\end{proof}

We are now in a position to characterize  strict twice epi-differentiability of  the composite form \eqref{CF}. Our main tools will be a chain and  a sum rules for epi-convergence of a sequence of extended-real-valued functions,
established in \cite[Excercise~7.47]{rw} and \cite[Theorem~7.64]{rw}, respectively. Recall from \cite[page~250]{rw} that a sequence of extended-real-valued functions $\{f^k\b$ converges continuously to $f:\R^n\to \oR$
if for every $x\in \R^n$ and every sequence $x^k\to x$, we have $f^k(x^k)\to f(x)$. 

\begin{Theorem}\label{stepc}
Assume that $\ph$ has the representation \eqref{CF} around $\ox\in \R^n$, $\ov\in   \sub \ph(\ox)$, and that $\olm\in \Lm(\ox,\ov)$. If the SOQC \eqref{soqc} holds at $(\ox,\olm)$, then the following   properties are equivalent:
\begin{itemize}[noitemsep,topsep=2pt]
\item [ \rm {(a)}]   there exists a neighborhood $U$ of $(\ox,\ov)$ such that for any $(x,v)\in U\cap \gph \sub \ph$, $\ph$ is strictly twice epi-differentiable at $x$ for $v$;
\item [ \rm {(b)}]  $\ov\in \ri \sub \ph(\ox)$.
\end{itemize}
\end{Theorem}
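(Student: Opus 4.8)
The plan is to prove Theorem~\ref{stepc} by reducing it, via Corollary~\ref{redu2}, to the case where $\nabla\Psi(\ox)$ has full rank, and then combining the chain rule for the strict second subderivative (Theorem~\ref{chss}), the characterization of strict twice epi-differentiability for prox-regular functions (Proposition~\ref{sted}), and the chain rule for epi-convergence of functions from \cite[Exercise~7.47]{rw}. The key conceptual point is that by Proposition~\ref{phpro}(d), the condition $\ov\in\ri\sub\ph(\ox)$ is equivalent to $\olm\in\ri\sub g(\Phi(\ox))$, which by Proposition~\ref{riste} is exactly the condition characterizing strict twice epi-differentiability of the polyhedral function $g$ on a neighborhood. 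So morally the theorem should follow by ``transporting'' strict twice epi-differentiability of $g$ through the smooth map $\Phi$; the role of the SOQC (and the resulting full-rank reduction) is to make this transport legitimate.

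For the implication (b)$\Rightarrow$(a): assuming $\ov\in\ri\sub\ph(\ox)$, pass to the representation $\ph=\vth\circ\Psi$ from Corollary~\ref{redu2} with $\nabla\Psi(\ox)$ of full rank; note $\bar\mu=\Gamma(\ox,\ov)$ and, via Proposition~\ref{phpro}(d) applied to this representation, $\bar\mu\in\ri\sub\vth(\Psi(\ox))$. First I would fix $(x,v)\in\gph\sub\ph$ near $(\ox,\ov)$ with unique multiplier $\mu\in\Gamma(x,v)$, and fix sequences $(x',v')\to(x,v)$ with $v'\in\sub\ph(x')$, with multipliers $\mu'\in\Gamma(x',v')$. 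By Proposition~\ref{ssub}, $\d^2\ph(x',v')(w')=\la\mu',\nabla^2\Psi(x')(w',w')\ra+\dd_{K_\vth(\Psi(x'),\mu')}(\nabla\Psi(x')w')$, and similarly for $(x,v)$. By Proposition~\ref{riste}, since $\bar\mu\in\ri\sub\vth(\Psi(\ox))$, formula \eqref{crieq} gives $K_\vth(\Psi(x'),\mu')=K_\vth(\Psi(\ox),\bar\mu)$ eventually (using that $(\Psi(x'),\mu')\to(\Psi(\ox),\bar\mu)$, which holds because $\Psi$ is continuous and $\mu'\to\bar\mu$ by the Lipschitz estimate on multipliers from Proposition~\ref{soqcg}(b)). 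Thus $\d^2\ph(x',v')$ is the sum of the continuously-convergent smooth quadratic $w'\mapsto\la\mu',\nabla^2\Psi(x')(w',w')\ra$ and the fixed indicator $\dd_{\{w\,:\,\nabla\Psi(\ox)w\in K\}}$ where $K=K_\vth(\Psi(\ox),\bar\mu)$ (here one uses that $\nabla\Psi(x')\to\nabla\Psi(\ox)$, $\nabla\Psi(\ox)$ full rank, to see that the constraint set $\{w: \nabla\Psi(x')w\in K\}$ converges appropriately — this is precisely where a chain rule for epi-convergence of indicators, via metric regularity of $w\mapsto\nabla\Psi(x')w-K$ from Proposition~\ref{mrcon}, enters). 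Then \cite[Theorem~7.64]{rw} (sum rule, one summand converging continuously) yields that $\d^2\ph(x',v')$ epi-converges. Invoking Proposition~\ref{sted}, strict twice epi-differentiability at $(x,v)$ follows; and since $(x,v)$ ranged over a neighborhood, (a) holds. One then transfers this from the $\vth\circ\Psi$ representation back to the original $g\circ\Phi$ representation, which is legitimate because Corollary~\ref{redu2} only shrinks the neighborhood $O$ and the strict second subderivatives agree by the computation at the end of the proof of Theorem~\ref{chss}.

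For the converse (a)$\Rightarrow$(b): suppose, for contradiction, that $\ov\notin\ri\sub\ph(\ox)$, equivalently $\olm\notin\ri\sub g(\Phi(\ox))$, equivalently (Proposition~\ref{riste}) $g$ is not strictly twice epi-differentiable on any neighborhood of $(\Phi(\ox),\olm)$ in $\gph\sub g$. I would argue that strict twice epi-differentiability of $\ph$ on a neighborhood forces the same for $g$: using the full-rank reduction $\ph=\vth\circ\Psi$ and the fact that $\Psi$ is metrically regular at $\ox$ (Lyusternik--Graves, as in the proof of Theorem~\ref{chss}), one can lift pairs $(y',\mu')\in\gph\sub\vth$ near $(\Psi(\ox),\bar\mu)$ to pairs $(x',v')\in\gph\sub\ph$ near $(\ox,\ov)$ with $\Psi(x')=y'$, $\mu'\in\Gamma(x',v')$; strict twice epi-differentiability of $\ph$ at such $(x',v')$ together with the chain-rule formula and the surjectivity of $\nabla\Psi(\ox)$ would force $\d^2\vth(y',\mu')$ to epi-converge along such sequences, hence (Proposition~\ref{sted} applied to $\vth$, which is polyhedral hence prox-regular everywhere on its graph) $\vth$ would be strictly twice epi-differentiable near $(\Psi(\ox),\bar\mu)$, hence so would $g=\vth\circ B$ near $(\Phi(\ox),\olm)$ — contradicting $\olm\notin\ri\sub g(\Phi(\ox))$ via Proposition~\ref{riste}. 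Alternatively, and perhaps more cleanly, one can run the contrapositive directly: if $\olm\notin\ri\sub g(\Phi(\ox))$ then by Proposition~\ref{riste} there are $(z^k,\lm^k)\to(\Phi(\ox),\olm)$ in $\gph\sub g$ at which $g$ fails strict twice epi-differentiability, i.e.\ the critical cones $K_g(z^k,\lm^k)$ cannot be made locally constant; pulling these back through $\Phi$ (again using metric regularity of $\Phi$ at $\ox$, which holds here because the BCQ \eqref{bcq} from Proposition~\ref{soqcg}(a) makes $x\mapsto\Phi(x)-\dom g$ metrically regular, or directly via the full-rank reduction) produces points in $\gph\sub\ph$ near $(\ox,\ov)$ where $\d^2\ph$ fails to epi-converge, contradicting (a) via Proposition~\ref{sted}.

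The main obstacle I anticipate is the careful handling of epi-convergence of the family of indicator functions $\dd_{\{w\,:\,\nabla\Psi(x')w\in K\}}$ as $x'\to\ox$: one needs both a liminf and a limsup (continuous-convergence-type) estimate for these constraint sets, and the limsup direction is exactly where uniform metric regularity of the linearized maps $w\mapsto\nabla\Psi(x')w-K$ is indispensable — this is the content of Proposition~\ref{mrcon}, and the argument mirrors the ``opposite inequality'' portion of the proof of Theorem~\ref{chss}, where $w^k\in G_k^{-1}(0)$ with $w^k\to w$ is constructed. A secondary technical point is verifying that the characterization Proposition~\ref{sted} genuinely applies to both $\ph$ (which needs prox-regularity and subdifferential continuity, supplied by Proposition~\ref{phpro}(a) under the BCQ, itself guaranteed by the SOQC via Proposition~\ref{soqcg}(a)) and to $\vth$ (automatic, as polyhedral convex functions are prox-regular and subdifferentially continuous throughout their subgradient graph), and that the twice epi-differentiability hypothesis needed in Proposition~\ref{sted} holds on a full neighborhood — this follows from \cite[Theorem~13.14]{rw} as already noted in the proof of Proposition~\ref{sted}.
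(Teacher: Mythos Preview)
Your argument for (b)$\Rightarrow$(a) is essentially the paper's, modulo the unnecessary detour through the full-rank reduction $\ph=\vth\circ\Psi$: the paper works directly with $g\circ\Phi$, using the constancy of the critical cone $K_g(\Phi(x),\lm)=K_g(\Phi(\ox),\olm)$ from \eqref{crieq}, continuous convergence of the quadratic term, the epi-convergence chain rule \cite[Exercise~7.47(a)]{rw} under the qualification $0\in\inte\big(K_g(\Phi(\ox),\olm)-\rge\nabla\Phi(\ox)\big)$ (which follows from the SOQC), the sum rule, and finally Proposition~\ref{sted}. Your invocation of Proposition~\ref{mrcon} here is in the right spirit but heavier than needed; the qualification condition for \cite[Exercise~7.47(a)]{rw} is checked directly from the SOQC.

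For (a)$\Rightarrow$(b) you take a genuinely different route, and the paper's is considerably shorter. The paper observes that strict twice epi-differentiability of $\ph$ at the \emph{single} point $(\ox,\ov)$ already gives $\d_s^2\ph(\ox,\ov)=\d^2\ph(\ox,\ov)$; by the chain rules of Proposition~\ref{ssub} and Theorem~\ref{chss}, the domain of the former is $K_\ph(\ox,\ov)$ while the domain of the latter is the symmetric set $\{w:\nabla\Phi(\ox)w\in K_g(\Phi(\ox),\olm)-K_g(\Phi(\ox),\olm)\}$. Equality forces $K_\ph(\ox,\ov)=N_{\sub\ph(\ox)}(\ov)$ to be a linear subspace, hence $\ov\in\ri\sub\ph(\ox)$. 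This is a five-line argument that never needs nearby points, never needs the full-rank reduction, and uses Theorem~\ref{chss} exactly as you announced in your plan but then did not exploit. Your contrapositive route---lifting failure of strict twice epi-differentiability of $\vth$ back to $\ph$ via the submersion $\Psi$---is workable in principle, but the key step you gloss over (epi-convergence of $\d^2\vth(y',\mu')$ can be read off from epi-convergence of $\d^2\vth(y',\mu')\circ\nabla\Psi(x')$ when the $\nabla\Psi(x')$ are uniformly surjective) is itself a nontrivial lemma that you would have to prove.
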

\begin{proof} Suppose that (b) holds and take the neighborhood $U$ for which the conclusions in  Propositions~\ref{soqcg}(b) and \ref{ssub} hold.  By Proposition~\ref{phpro}(d),  we get
  $\olm\in \ri \sub g(\Phi(\ox))$. Using  \eqref{crieq} which holds under the condition  $\olm\in \ri \sub g(\Phi(\ox))$,   
we get $K_g(\Phi(\ox),\olm)=K_g(z,\lm)$  for all $(z,\lm)\in V\cap \gph \sub g$, where $V$ is taken from \eqref{crieq}. Shrinking the neighborhood $U$    if necessary and using \eqref{mslag}, we can assume that 
$(\Phi(x),\lm)\in V\cap \gph \sub g$, where $(x,v)\in U\cap \gph \sub \ph$ and $\lm$ is the unique Lagrange multiplier in $\Lm(x,v)$, which results from Proposition~\ref{soqcg}(b). Thus,   we arrive at 
\begin{equation}\label{cr21}
K_g(\Phi(\ox),\olm)=K_g(\Phi(x),\lm)\quad \mbox{for all}\;\; (x,v)\in U\cap \gph \sub \ph, \;\lm\in \Lm(x,v).
\end{equation} 
We claim that for any sequence $\{(x^k,v^k)\b\subset U\cap \gph \sub \ph$ such that $(x^k,v^k)\to (\ox,\ov)$, we have 
\begin{equation}\label{ep}
\d^2 \ph(x^k,v^k)\xrightarrow{e} \d^2 \ph(\ox,\ov).
\end{equation}
To this end, pick $w\in \R^n$ and conclude from  Proposition~\ref{ssub} that 
$$
\d^2 \ph(x^k,v^k)(w)=f^k(w)+\dd_{K_g(\Phi(x^k),\,\lm^k)}(\nabla\Phi(x^k)w)\quad \mbox{with}\;\; f^k(w):= \la \lm^k,\nabla^2 \Phi(x^k)(w,w)\ra,
$$
where $\lmk$ is the unique element in $\Lm(\xk, v^k)$. Since $(x^k,v^k)\to (\ox,\ov)$, we deduce from \eqref{mslag} that $\lm^k\to \olm$. This tells us that the sequence of functions $\{f^k\b$ is continuously convergent to $f$, where 
$f(w)=\la \olm,\nabla^2 \Phi(\ox)(w,w)\ra$ for any $w\in \R^n$. Moreover, it follows from \eqref{cr21} that $K_g(\Phi(\ox),\olm)=K_g(\Phi(x^k),\lm^k)$, which immediately implies   that 
$\dd_{K_g(\Phi(x^k),\,\lm^k)}\xrightarrow{e} \dd_{K_g(\Phi(\ox),\,\olm)}$. Appealing now to \cite[Exercise~7.47(a)]{rw} tells us that 
\begin{equation}\label{cq4}
\dd_{K_g(\Phi(x^k),\,\lm^k)}\circ\nabla\Phi(x^k) \xrightarrow{e} \dd_{K_g(\Phi(\ox),\,\olm)} \circ \nabla\Phi(\ox)
\end{equation}
provided that the condition 
\begin{equation}\label{cq3}
0\in \inte \big(K_g(\Phi(\ox),\olm)-\rge \nabla \Phi(\ox)\big)
\end{equation}
is satisfied. According to \cite[Proposition~2.97]{bs}, this condition amounts to
$$
K_g(\Phi(\ox),\olm)^*  \cap  \ker \nabla \Phi(\ox)^*=\{0\}.
$$
By \eqref{cal2d}, we have $K_g(\Phi(\ox),\olm)^*\subset D^*(\sub g)\big(\Phi(\ox),\olm\big)(0)$, which together  with the SOQC \eqref{soqc} confirms that  \eqref{cq3} is satisfied. 
This proves \eqref{cq4}. Appealing now to the sum rule for epi-convergence from  \cite[Theorem~7.46(b)]{rw} justifies  \eqref{ep}. Thus, it results from Proposition~\ref{sted} that $\ph$ is strictly twice epi-differentiable at
$\ox$ for $\ov$. To achieve the same conclusion for any point $(x,v)\in \gph \sub \ph$ sufficiently close to $(\ox,\ov)$, we claim that for any such a pair $(x,v)$, we have $v\in \ri \sub \ph(x)$. As pointed out above,  
  the condition $\ov\in \ri \sub \ph(\ox)$ yields   $\olm\in \ri \sub g(\Phi(\ox))$. Since $K_g(\Phi(\ox),\olm)=N_{\sub g(\Phi(\ox))}(\olm)$, we conclude from Proposition~\ref{phpro}(c) that the critical cone $K_g(\Phi(\ox),\olm)$ is a linear subspace.
  This, combined with \eqref{cr21}, ensures that $K_g(\Phi(x),\lm)$ is a linear subspace for any  $(x,v)\in U\cap \gph \sub \ph$ and the unique Lagrange multiplier $\lm$ in $\Lm(x,v)$. Again, since 
   $K_g(\Phi(x),\lm)=N_{\sub g(\Phi(x))}(\lm)$, we arrive at $\lm\in \ri \sub g(\Phi(x))$. It follows from Proposition~\ref{phpro}(d) that $v\in \ri \sub \ph(x)$, which proves our claim. Employing the same argument as the one 
   for $(\ox,\ov)$ indicates that $\ph$ is strictly twice epi-differentiable at $x$ for $v$  for any  $(x,v)\in U\cap \gph \sub \ph$ and hence proves (a).
   
   Assume now that (a) holds. In particular, $\ph$ is strictly twice epi-differentiable at $\ox$ for $\ov$. This implies that $ \d_s^2 \ph(\ox,\ov)= \d^2 \ph(\ox,\ov)$.
   According to \cite[Theorem~13.14]{rw}, we have $\dom  \d^2 \ph(\ox,\ov)=K_{\ph}(\ox,\ov)=N_{\sub \ph(\ox)}(\ov)$. We claim that $K_{\ph}(\ox,\ov)$ is a linear subspace.
   To justify it, it suffices to show that if $w\in K_{\ph}(\ox,\ov)$, we have $-w\in K_{\ph}(\ox,\ov)$.
   Take $w\in K_{\ph}(\ox,\ov)$. By Proposition~\ref{phpro}(b),  we get  $\nabla \Phi(\ox)w\in K_g(\Phi(\ox),\olm) \subset \spann\{K_g(\Phi(\ox),\olm)\}$, which implies that  $-\nabla \Phi(\ox)w\in \spann\{K_g(\Phi(\ox),\olm)\}$.
This, combined with Theorem~\ref{chss}, confirms that $ \d_s^2 \ph(\ox,\ov)(-w)<\infty$ and hence  $ \d^2 \ph(\ox,\ov)(-w)<\infty$, which implies that  $-w\in K_{\ph}(\ox,\ov)$.
Thus,  $N_{\sub \ph(\ox)}(\ov)$ is a linear subspace and hence  $\ov\in \ri \sub \ph(\ox)$, which completes the proof. 
\end{proof}

Note that when the polyhedral function $g$ in Theorem~\ref{stepc}  is the pointwise maximum 
$g(z)=\max\{z_1,\ldots,z_m\}$ with $z=(z_1,\ldots,z_m)$ as in Example~\ref{point}(a), Theorem~\ref{stepc} covers  \cite[Proposition~3.8]{pr3} with a different proof.
As pointed out in Example~\ref{point}(a),  in this case, the SOQC \eqref{soqc}  is equivalent  to saying that the vectors $\nabla\ph_1(\ox),\ldots,\nabla\ph_m(\ox)$ are affinely independent. 
 When $g$ is given as the one in Example~\ref{point}(b), Theorem~\ref{stepc}  reduces to  \cite[Proposition~4.13]{pr3}. We should add here both results in \cite{pr3} only 
 contain the implication (b)$\implies $(a) in Theorem~\ref{stepc} and do not achieve the established equivalence of  (a) and (b) as in Theorem~\ref{stepc}.  

 We continue by exploring an important geometric property of $\gph \sub \ph$, called strict proto-differentiability, which will be of great importance in the next section. To this end, take 
  a set-valued mapping $F:\R^n\tto \R^m$ and   recall from \cite[page~331]{rw} that   $F$ is said to be {\em proto-differentiable} at $\ox$ for $\oy\in F(\ox)$ if the outer graphical limit in \eqref{proto} is actually a full limit.
  When $F(\ox)$ is a singleton consisting of $\oy$ only, the notation $DF(\ox, \oy)$ is simplified to $DF(\ox)$. It is easy to see that 
for a single-valued mapping $F$, which is differentiable at $\ox$,  the graphical derivative  $DF(\ox)$ boils down to the Jacobian matrix of $F$ at $\ox$, denoted by $\nabla F(\ox)$. 
Recall from \cite[Definition~9.53]{rw} that the strict graphical derivative 
of  a set-valued mapping $F$ at $\ox$ for $\oy$, is  the set-valued mapping $\widetilde D F(\ox,\oy):\R^n\tto \R^m$, defined  by 
\begin{equation}\label{sproto}
\gph \widetilde DF(\ox,\oy)=\limsup_{
  \substack{
   t\searrow 0 \\
  (x,y)\xrightarrow{ \gph F}(\ox,\oy)
  }} \frac{\gph F-(x,y)}{t}.
\end{equation}
The set-valued mapping $F$ is said to be {\em strictly} proto-differentiable at $\ox$ for $\oy$ if the outer graphical limit in \eqref{sproto} is   a full limit.
 To shed more light on this definition,  take a set  $\Omega\subset \R^d$ and  $\ox\in \Omega$. The regular (Clarke) tangent cone and the paratingent cone to   $\Omega$ at $\ox$ are defined, respectively,   by
$$
 \rt_{\Omega}(\ox) =\liminf_{x \xrightarrow{ \Omega}\ox,t\searrow 0} \frac{\Omega-x}{t}\quad \mbox{and}\quad  \widetilde {T}_{\Omega}(\ox) =\limsup_{x \xrightarrow{ \Omega}\ox,t\searrow 0} \frac{\Omega-x}{t}.
$$
Since we always have $ \rt_{\Omega}(\ox)\subset  T_{\Omega}(\ox)\subset  \widetilde {T}_{\Omega}(\ox)$, strict proto-differentiability of a set-valued mapping $F$ at $\ox$ for $\oy$ amounts to the condition 
$ \rt_{\gph F}(\ox,\oy)= \widetilde {T}_{\gph F}(\ox,\oy)$. Note that if  the latter equality  holds, we must have $\widetilde D F(\ox, \oy)=DF(\ox, \oy)$. We recently showed in \cite[Theorem~3.5(c)]{hjs}
that if $g:\R^m\to \oR$ is a polyhedral function and $(\oz,\olm)\in \gph \sub g$, then $\sub g$ is strictly proto-differentiable at $\oz$ for $\olm$ if and only if $\olm\in \ri \sub g(\oz)$.  Below, we are going to demonstrate that a similar result can be expected for the composite form in \eqref{CF} when the SOQC \eqref{soqc} is satisfied.
 
\begin{Theorem}\label{prot}
Assume that $\ph$ has the representation \eqref{CF} around $\ox\in \R^n$, $\ov\in   \sub \ph(\ox)$, and that $\olm\in \Lm(\ox,\ov)$. If the SOQC \eqref{soqc} holds at $(\ox,\olm)$, then the following   properties are equivalent:
\begin{itemize}[noitemsep,topsep=2pt]
\item [ \rm {(a)}]   there exists a neighborhood $U$ of $(\ox,\ov)$ such that for any $(x,v)\in U\cap \gph \sub \ph$, $\sub \ph$ is strictly proto-differentiable at $x$ for $v$;
\item [ \rm {(b)}]  $\ov\in \ri \sub \ph(\ox)$.
\end{itemize}
\end{Theorem}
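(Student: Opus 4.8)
The plan is to mirror the strategy used for Theorem~\ref{stepc}, replacing the tools for epi-convergence of second-order difference quotients with the corresponding tools for graphical convergence of difference quotients of set-valued mappings. The starting point is the observation that strict proto-differentiability of $\sub\ph$ at $x$ for $v$ is exactly the statement that the difference quotient mappings $\Delta_t(\sub\ph)(x,v):=(\sub\ph(x+t\,\cdot)-v)/t$ graph-converge as $t\searrow0$ \emph{and} $(x',v')\to(x,v)$ along $\gph\sub\ph$; this is the set-valued analogue of the $f(x)\to f(\ox)$-augmented epi-convergence appearing in the definition of strict twice epi-differentiability. Since $\sub\ph$ is prox-regular and subdifferentially continuous at $\ox$ for $\ov$ by Proposition~\ref{phpro}(a), the set-valued version of Proposition~\ref{sted} (again from \cite{pr2}, or deducible from \cite[Theorem~13.40]{rw} together with the known twice epi-differentiability under \eqref{bcq}) should reduce the desired equivalence to showing that the graphical derivative mappings $D(\sub\ph)(x,v)$ converge graphically as $(x,v)\to(\ox,\ov)$ along $\gph\sub\ph$.

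For the implication (b)$\Longrightarrow$(a), I would proceed as follows. Assume $\ov\in\ri\sub\ph(\ox)$; by Proposition~\ref{phpro}(d) this gives $\olm\in\ri\sub g(\Phi(\ox))$, and then \eqref{crieq} together with the multiplier estimate \eqref{mslag} yields the local constancy of critical cones recorded as \eqref{cr21}: $K_g(\Phi(x),\lm)=K_g(\Phi(\ox),\olm)=:K$ for all $(x,v)\in U\cap\gph\sub\ph$, where $\lm$ is the unique multiplier in $\Lm(x,v)$. Next, one needs a chain rule for the graphical derivative of $\sub\ph$ valid \emph{uniformly} on a neighborhood: under \eqref{bcq}, for $(x,v)$ near $(\ox,\ov)$,
\begin{equation*}
D(\sub\ph)(x,v)(w)=\nabla^2\Phi(x)^*(w,\lm)+\nabla\Phi(x)^*\,D(\sub g)(\Phi(x),\lm)(\nabla\Phi(x)w)=\nabla^2\Phi(x)^*(w,\lm)+\nabla\Phi(x)^*N_{K}(\nabla\Phi(x)w),
\end{equation*}
using \eqref{cal2d}. (Such a chain rule for the graphical derivative of the subgradient mapping under the basic constraint qualification is standard; compare the second-subderivative chain rule of Proposition~\ref{ssub} and \cite[Theorem~13.14]{rw}.) Because $\lm^k\to\olm$ from \eqref{mslag}, $\nabla^2\Phi(x^k)\to\nabla^2\Phi(\ox)$ and $\nabla\Phi(x^k)\to\nabla\Phi(\ox)$, the only obstruction to passing to the graphical limit is the convergence of the inner piece $w\mapsto\nabla\Phi(x^k)^*N_K(\nabla\Phi(x^k)w)$; this is handled by the global metric regularity of $w\mapsto\nabla\Phi(x^k)w-K$ with a \emph{uniform} constant $2\bar\gg$, which is precisely Proposition~\ref{mrcon}. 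Concretely, given any $(w,z)$ in the graph of the limiting mapping, uniform metric regularity lets one produce $w^k\to w$ with $\nabla\Phi(x^k)w^k\in K$ (as in the estimate in the proof of Theorem~\ref{chss}), and then select the normal-cone components; the outer-limit inclusion is routine. This gives $D(\sub\ph)(x^k,v^k)\to D(\sub\ph)(\ox,\ov)$ graphically along $\gph\sub\ph$, hence strict proto-differentiability at $\ox$ for $\ov$. Finally, exactly as at the end of the proof of Theorem~\ref{stepc}, $\ov\in\ri\sub\ph(\ox)$ propagates to $v\in\ri\sub\ph(x)$ for all nearby $(x,v)\in\gph\sub\ph$ (the critical cone $K$ is a subspace, hence so is every $K_g(\Phi(x),\lm)=N_{\sub g(\Phi(x))}(\lm)$, whence $\lm\in\ri\sub g(\Phi(x))$ and Proposition~\ref{phpro}(d) applies), so (a) holds on a full neighborhood.

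For the converse (a)$\Longrightarrow$(b), strict proto-differentiability of $\sub\ph$ at $\ox$ for $\ov$ forces $D_*(\sub\ph)(\ox,\ov)=D(\sub\ph)(\ox,\ov)$, and in particular the domains agree. By the chain rule above, $\dom D(\sub\ph)(\ox,\ov)=\{w\mid\nabla\Phi(\ox)w\in K_g(\Phi(\ox),\olm)\}=K_\ph(\ox,\ov)=N_{\sub\ph(\ox)}(\ov)$ via Proposition~\ref{phpro}(b). On the other hand, the strict graphical derivative has domain $\dom D_*(\sub\ph)(\ox,\ov)\supseteq\dom D^*(\sub\ph)(\ox,\ov)(0)$-type quantities that are symmetric — more directly, one shows $\dom D_*(\sub\ph)(\ox,\ov)=K_g(\Phi(\ox),\olm)-K_g(\Phi(\ox),\olm)$ pulled back through $\nabla\Phi(\ox)$, which is a linear subspace; equating the two domains forces $K_\ph(\ox,\ov)$ to be a linear subspace, i.e.\ $N_{\sub\ph(\ox)}(\ov)$ is a subspace, which by Proposition~\ref{phpro}(c) is equivalent to $\ov\in\ri\sub\ph(\ox)$. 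Alternatively, and perhaps more cleanly, one invokes the polyhedral result \cite[Theorem~4.3]{hjs}: strict proto-differentiability of $\sub\vth$ at $\Psi(\ox)$ for $\bar\mu$ is equivalent to $\bar\mu\in\ri\sub\vth(\Psi(\ox))$, and then uses Corollary~\ref{redu2} (the reduction $\ph=\vth\circ\Psi$ with $\nabla\Psi(\ox)$ of full rank, so $\Psi$ is metrically regular) to transfer strict proto-differentiability between $\sub\ph$ and $\sub\vth$ — the full-rank/metric-regularity of $\Psi$ makes $\gph\sub\ph$ and $\gph\sub\vth$ locally diffeomorphic up to the linear map $B$, so the paratingent and Clarke tangent cones correspond — and finally $\bar\mu\in\ri\sub\vth(\Psi(\ox))$ is equivalent to $\olm=B^*\bar\mu\in\ri\sub g(\Phi(\ox))$, equivalent to $\ov\in\ri\sub\ph(\ox)$ by Proposition~\ref{phpro}(d).

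The main obstacle I anticipate is the first, "hard" inclusion for (b)$\Longrightarrow$(a): establishing the \emph{uniform} chain rule for $D(\sub\ph)$ and then showing the inner normal-cone mappings converge graphically. This is where Proposition~\ref{mrcon} (uniform global metric regularity of $w\mapsto\nabla\Phi(x)w-K$) does the decisive work, playing exactly the role that the uniform metric-regularity estimate played in the proof of the chain rule for the strict second subderivative (Theorem~\ref{chss}); the argument producing $w^k\to w$ with $\nabla\Phi(x^k)w^k\in K$ should be essentially the same displayed estimate used there. Once that convergence is in hand, combining the continuous convergence of the smooth terms with the graphical convergence of the normal-cone terms via a sum/composition rule for graphical limits (the set-valued analogue of \cite[Theorem~7.46]{rw}, e.g.\ \cite[Exercise~5.34]{rw} or direct verification) finishes the outer-limit inclusion, and the reverse inclusion $DF(\ox,\oy)\subseteq D_*F(\ox,\oy)$ is automatic.
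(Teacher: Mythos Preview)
Your plan reproduces, at the level of graphical derivatives, essentially the entire proof of Theorem~\ref{stepc}. That can be made to work, but it is far more labor than the paper expends. The paper's proof is two sentences: since $\ph$ is prox-regular and subdifferentially continuous (Proposition~\ref{phpro}(a)) and twice epi-differentiable nearby, \cite[Corollary~4.3]{pr2} gives \emph{directly} that strict proto-differentiability of $\sub\ph$ at $x$ for $v$ is equivalent to strict twice epi-differentiability of $\ph$ at $x$ for $v$, for all $(x,v)\in\gph\sub\ph$ near $(\ox,\ov)$. Hence Theorem~\ref{prot}(a) is literally the same statement as Theorem~\ref{stepc}(a), and the equivalence with (b) is inherited. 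You already invoke \cite{pr2} for the ``set-valued version of Proposition~\ref{sted}''; the point is that the very same result from \cite{pr2} also supplies the bridge strict proto-differentiability $\Longleftrightarrow$ strict twice epi-differentiability, so there is no need to rebuild the graphical-convergence argument from scratch.

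If you do insist on the direct route, your (b)$\Longrightarrow$(a) outline is sound: the chain rule for $D(\sub\ph)$ is \eqref{gdph}, the constancy of $K$ is \eqref{cr21}, uniform metric regularity is Proposition~\ref{mrcon}, and the propagation of $v\in\ri\sub\ph(x)$ is exactly the closing paragraph of the proof of Theorem~\ref{stepc}. Your (a)$\Longrightarrow$(b), however, has a genuine gap. You assert that $\dom D_*(\sub\ph)(\ox,\ov)$ equals the $\nabla\Phi(\ox)$-preimage of $K_g(\Phi(\ox),\olm)-K_g(\Phi(\ox),\olm)$, but no chain rule for the strict graphical derivative is available here; this would require an argument on the level of Theorem~\ref{chss}, which the paper carried out for $\d_s^2\ph$ but not for $D_*(\sub\ph)$. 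Your alternative via Corollary~\ref{redu2} is also incomplete: $\gph\sub\ph\subset\R^{2n}$ and $\gph\sub\vth\subset\R^{2s}$ are not ``locally diffeomorphic'' in any literal sense, and transferring strict proto-differentiability across the map $(x,v)\mapsto(\Psi(x),\mu)$ needs a careful argument you have not supplied. Both gaps disappear once you route through strict twice epi-differentiability as the paper does.
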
 
 \begin{proof} As explained in the proof of Proposition~\ref{sted}, the composite function $\ph$ satisfies the assumptions of \cite[Corollary~4.3]{pr2}. The latter result demonstrates that 
 the property in (a) is equivalent to the property  in Theorem~\ref{stepc}(a). This, combined with  Theorem~\ref{stepc}, proves the equivalence of (a) and (b).  
 \end{proof}
 
 We continue by characterizing the regularity of the graph of the subgradient mappings of functions with the composite representation  \eqref{CF}. 
 Recall that a set $C\subset \R^d$ is {\em regular} at $\ox\in C$ provided that $\rN_C(\ox)=N_C(\ox)$.
 To achieve our goal, we begin with an observation about the regular tangent cone.  
 \begin{Lemma}\label{subsp} Assume that $\ph:\R^n\to \oR$ has the representation \eqref{CF} around $\ox\in \R^n$, $\ov\in \sub \ph(\ox)$, and the basic constraint 
 qualification \eqref{bcq1} holds at $\ox$. Then the regular tangent cone $ \rt_{\gph \sub \ph}(\ox,\ov)$ is a linear subspace. 
 \end{Lemma}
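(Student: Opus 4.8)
The plan is to exploit the structural fact that the regular (Clarke) tangent cone to any closed set is always a closed convex cone, and then to show that in our setting it is in fact a linear subspace by producing, for every $w\in\rt_{\gph\sub\ph}(\ox,\ov)$, the opposite vector $-w$ in the same cone. The key is that $\gph\sub\ph$ is a Clarke regular set near $(\ox,\ov)$: by Proposition~\ref{phpro}(a) the function $\ph$ is prox-regular and subdifferentially continuous at $\ox$ for $\ov$ under the basic constraint qualification \eqref{bcq}, and for such functions the subgradient graph is known to be Clarke regular (this is part of the theory of prox-regular functions; e.g.\ via \cite[Proposition~13.32]{rw} and the results on regularity of subgradient graphs). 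Clarke regularity of $\gph\sub\ph$ at $(\ox,\ov)$ means $\rN_{\gph\sub\ph}(\ox,\ov)=N_{\gph\sub\ph}(\ox,\ov)$, equivalently $\rt_{\gph\sub\ph}(\ox,\ov)=T_{\gph\sub\ph}(\ox,\ov)$, so the regular tangent cone coincides with the ordinary tangent cone, which is $\gph D(\sub\ph)(\ox,\ov)$ by \eqref{proto}.

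Next I would compute this tangent cone explicitly via the chain rule for the graphical derivative of $\sub\ph$. Under the SOQC \eqref{soqc} — hence under \eqref{bcq} by Proposition~\ref{soqcg}(a) — one has a chain rule expressing $D(\sub\ph)(\ox,\ov)$ in terms of $\nabla\Phi(\ox)$, $\nabla^2\Phi(\ox)$, and $D(\sub g)(\Phi(\ox),\olm)=N_{K_g(\Phi(\ox),\olm)}$ (see \eqref{cal2d}). Concretely, a pair $(w,z)$ lies in $\gph D(\sub\ph)(\ox,\ov)$ iff $\nabla\Phi(\ox)w\in K_g(\Phi(\ox),\olm)$ and $z-\nabla^2\Phi(\ox)^*(\olm,w)\in\nabla\Phi(\ox)^*\big(N_{K_g(\Phi(\ox),\olm)}(\nabla\Phi(\ox)w)\big)$; the linear-subspace question reduces to whether the ``base'' of this cone is a subspace. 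Since $K_g(\Phi(\ox),\olm)$ is polyhedral and $D(\sub g)(\oz,\olm)=N_{K_g(\oz,\olm)}$ has graph equal to a union of polyhedral cones, the genuine content is the following local-constancy phenomenon: when we replace $T$ by $\rt$, i.e.\ take tangents along sequences $(x,v)\toset{\gph\sub\ph}(\ox,\ov)$, the only tangent directions $w$ with $\nabla\Phi(\ox)w$ feasible that survive are those for which $\nabla\Phi(\ox)w\in K_g(\Phi(\ox),\olm)-K_g(\Phi(\ox),\olm)$, a linear subspace, and symmetrically for the normal-cone part. One establishes this by combining the polyhedral reduction $\ph=\vartheta\circ\Psi$ with $\nabla\Psi(\ox)$ of full rank from Corollary~\ref{redu2}, the known description of $\rt_{\gph\sub\vartheta}$ for polyhedral $\vartheta$ implicit in \cite[Theorem~4.3]{hjs} and the surrounding paratingent/regular-tangent analysis, and the metric regularity of $w\mapsto\nabla\Psi(\ox)w-K_\vartheta$ from Proposition~\ref{mrcon} to transfer the full-rank image calculation.

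An alternative, slicker route avoids any explicit chain-rule computation: a closed convex cone $\rt_{\gph\sub\ph}(\ox,\ov)$ is a linear subspace if and only if its polar, the regular normal cone $\rN_{\gph\sub\ph}(\ox,\ov)$, is a linear subspace. By Clarke regularity, $\rN_{\gph\sub\ph}(\ox,\ov)=N_{\gph\sub\ph}(\ox,\ov)$, and the limiting normal cone to $\gph\sub\ph$ is governed by the coderivative $D^*(\sub\ph)(\ox,\ov)$, which again under the SOQC admits a chain rule in terms of $D^*(\sub g)(\Phi(\ox),\olm)$ and $\nabla^2\Phi(\ox)$. Using \eqref{cal2d}, $D^*(\sub g)(\oz,\olm)(0)=K_g(\oz,\olm)^*-K_g(\oz,\olm)^*$ is already a subspace, and $\dom D^*(\sub g)(\oz,\olm)=K_g(\oz,\olm)-K_g(\oz,\olm)$ from \eqref{domgd} is a subspace; feeding these through the coderivative chain rule shows the relevant cone is closed under negation. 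Either way, the main obstacle is the same: justifying the precise chain rule for $\rt_{\gph\sub\ph}$ (equivalently for $D(\sub\ph)$ or $D^*(\sub\ph)$) and identifying which ``active'' polyhedral piece of $D(\sub g)(\Phi(\ox),\olm)$ the regular tangent cone selects — namely the largest linear subspace inside it — rather than the full (non-convex, merely conical) graphical derivative. I expect the cleanest write-up to go through the reduction $\ph=\vartheta\circ\Psi$ with $\nabla\Psi(\ox)$ full rank (Corollary~\ref{redu2}), so that $\rt_{\gph\sub\ph}(\ox,\ov)=(\nabla\Psi(\ox)\times\nabla\Psi(\ox))^{-1}\big(\rt_{\gph\sub\vartheta}(\Psi(\ox),\bar\mu)\big)$ modulo the second-order correction term, reducing everything to the already-known polyhedral case where $\rt_{\gph\sub\vartheta}$ is a subspace precisely by the structure theory behind Proposition~\ref{riste} and \cite[Theorem~4.3]{hjs}.
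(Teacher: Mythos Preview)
Your central claim---that prox-regularity and subdifferential continuity of $\ph$ force $\gph\sub\ph$ to be Clarke regular at $(\ox,\ov)$---is false, and this undercuts every branch of your argument. Clarke regularity of $\gph\sub\ph$ is \emph{not} a consequence of prox-regularity; in fact, Theorem~\ref{regch} of this very paper (whose proof \emph{uses} Lemma~\ref{subsp}) establishes that regularity of $\gph\sub\ph$ at $(\ox,\ov)$ is equivalent to the relative interior condition $\ov\in\ri\sub\ph(\ox)$. The lemma, however, is asserted for arbitrary $\ov\in\sub\ph(\ox)$, including boundary subgradients where $T_{\gph\sub\ph}(\ox,\ov)$ is genuinely larger than $\rt_{\gph\sub\ph}(\ox,\ov)$ and is \emph{not} a linear subspace. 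So your reduction ``$\rt=T$, now compute $T$'' collapses at the first step, and your alternative route via $\rN=N$ collapses for the same reason. There is a second, independent gap: the lemma assumes only the basic constraint qualification \eqref{bcq}, not the SOQC \eqref{soqc}. Your chain-rule formulas for $D(\sub\ph)$ and $D^*(\sub\ph)$, as well as the reduction in Corollary~\ref{redu2}, all require the SOQC (and uniqueness of the multiplier $\olm$), so they are not available here.

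The paper's proof takes a completely different and much shorter route that sidesteps all of this. From Proposition~\ref{phpro}(a), $\ph$ is prox-regular and subdifferentially continuous at $\ox$ for $\ov$; by \cite[Proposition~13.46]{rw} this makes $\sub\ph$ \emph{graphically Lipschitzian} of dimension $n$ around $(\ox,\ov)$, i.e.\ $\gph\sub\ph$ is locally the graph of a Lipschitz map from $\R^n$ to $\R^n$ after a linear change of coordinates. For any such set, Rockafellar's \cite[Theorem~3.5(b)]{r85} gives directly that the regular tangent cone is a linear subspace. No chain rule, no Clarke regularity, no SOQC---the result is a purely structural fact about graphs of Lipschitz maps, and it holds at every $(\ox,\ov)\in\gph\sub\ph$, not just at relative-interior subgradients.
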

 \begin{proof} By Proposition~\ref{phpro}(a), the composite function $\ph$, accompanied with \eqref{bcq1}, is prox-regular and subdifferentially continuous 
 at $\ox$ for $\ov$. Thus, it results from \cite[Proposition~13.46]{rw} that the subgradient mapping $\sub \ph:\R^n\tto\R^n$ is  graphically  Lipschitzian   of dimension $n$ around $(\ox,\ov)$, meaning that
 $\gph \sub \ph$ can be identified locally with the graph of a Lipschitz continuous mapping from $\R^n$ into $\R^n$; see \cite[Definition~9.66]{rw} for more elaboration on this concept.
Employing now \cite[Theorem~3.5(b)]{r85} tells us that $ \rt_{\gph \sub \ph}(\ox,\ov)$ is a linear subspace.
 \end{proof}
 
  \begin{Theorem} \label{regch} Assume that $\ph:\R^n\to \oR$ has the representation \eqref{CF} around $\ox\in \R^n$, $\ov\in \sub \ph(\ox)$, and that $\olm\in \Lm(\ox,\ov)$. If the SOQC \eqref{soqc} holds at $(\ox,\olm)$, then
 the following properties are equivalent:
 \begin{itemize}[noitemsep,topsep=2pt]
 \item [ \rm {(a)}]     $\ov\in \ri \sub \ph(\ox) $;  
\item [ \rm {(b)}]  $\gph \sub \ph$ is regular at $(\ox,\ov)$, that is 
$
\rN_{\gph\sub \ph}(\ox,\ov)=N_{\gph\sub \ph}(\ox,\ov)
$;
\item [ \rm {(c)}]   for any $w\in \R^n$, we have $D(\sub \ph)(\ox,\ov)(w)=D^*(\sub \ph)(\ox,\ov)(w)$.
\end{itemize}
 \end{Theorem}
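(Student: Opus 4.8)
The plan is to prove the cycle (a)$\Leftrightarrow$(b), (a)$\Rightarrow$(c), (c)$\Rightarrow$(a); throughout we use that the basic constraint qualification \eqref{bcq} holds near $\ox$ by Proposition~\ref{soqcg}(a), so Proposition~\ref{phpro}, Lemma~\ref{subsp}, Proposition~\ref{ssub} and Theorem~\ref{prot} are all available. For (a)$\Rightarrow$(b): Theorem~\ref{prot} turns (a) into strict proto-differentiability of $\sub\ph$ at $\ox$ for $\ov$, i.e.\ $\rt_{\gph\sub\ph}(\ox,\ov)=\wtilde T_{\gph\sub\ph}(\ox,\ov)$; squeezing this equality between $\rt_{\gph\sub\ph}(\ox,\ov)\subseteq T_{\gph\sub\ph}(\ox,\ov)\subseteq\wtilde T_{\gph\sub\ph}(\ox,\ov)$ forces $\rt_{\gph\sub\ph}(\ox,\ov)=T_{\gph\sub\ph}(\ox,\ov)$, and the characterization of Clarke regularity in \cite[Corollary~6.29]{rw} then yields $\rN_{\gph\sub\ph}(\ox,\ov)=N_{\gph\sub\ph}(\ox,\ov)$, which is (b).

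For (b)$\Rightarrow$(a): \cite[Corollary~6.29]{rw} gives $\rt_{\gph\sub\ph}(\ox,\ov)=T_{\gph\sub\ph}(\ox,\ov)$, which is a linear subspace by Lemma~\ref{subsp}; thus $\gph D(\sub\ph)(\ox,\ov)=T_{\gph\sub\ph}(\ox,\ov)$ is a subspace. Since $\ph$ is twice epi-differentiable at $\ox$ for $\ov$ (\cite[Theorem~13.14]{rw}) and prox-regular and subdifferentially continuous there (Proposition~\ref{phpro}(a)), one has $D(\sub\ph)(\ox,\ov)=\sub\big(\tfrac12\d^2\ph(\ox,\ov)\big)$ by \cite[Theorem~13.40]{rw}; feeding the formula of Proposition~\ref{ssub} for $\d^2\ph(\ox,\ov)$ together with the critical-cone formula in Proposition~\ref{phpro}(b) into this identity shows that $D(\sub\ph)(\ox,\ov)(w)=\nabla^2\la\olm,\Phi\ra(\ox)w+N_{K_\ph(\ox,\ov)}(w)$, so $\dom D(\sub\ph)(\ox,\ov)=K_\ph(\ox,\ov)$. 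Being the projection of a subspace onto the first factor, $K_\ph(\ox,\ov)$ is itself a subspace, and since $K_\ph(\ox,\ov)=N_{\sub\ph(\ox)}(\ov)$ by Proposition~\ref{phpro}(b), Proposition~\ref{phpro}(c) delivers $\ov\in\ri\sub\ph(\ox)$.

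For (a)$\Rightarrow$(c): by the already established implication (a)$\Rightarrow$(b), $\gph\sub\ph$ is regular at $(\ox,\ov)$, and by Proposition~\ref{phpro}(b)(c) the cone $S:=K_\ph(\ox,\ov)=N_{\sub\ph(\ox)}(\ov)$ is a subspace, so the description of $D(\sub\ph)(\ox,\ov)$ above reads $L:=\gph D(\sub\ph)(\ox,\ov)=\{(w,Aw+z)\,|\,w\in S,\,z\in S^\perp\}$ with $A:=\nabla^2\la\olm,\Phi\ra(\ox)$ symmetric. Hence $L$ is an $n$-dimensional subspace satisfying $\la b,a'\ra=\la a,b'\ra$ for all $(a,b),(a',b')\in L$, from which a dimension count gives $L^\perp=\{(b,-a)\,|\,(a,b)\in L\}$. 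Regularity of $\gph\sub\ph$ at $(\ox,\ov)$ now yields $N_{\gph\sub\ph}(\ox,\ov)=\rN_{\gph\sub\ph}(\ox,\ov)=T_{\gph\sub\ph}(\ox,\ov)^*=L^\perp$, so for every $w\in\R^n$ one has $w'\in D^*(\sub\ph)(\ox,\ov)(w)\Leftrightarrow(w',-w)\in L^\perp\Leftrightarrow(w,w')\in L\Leftrightarrow w'\in D(\sub\ph)(\ox,\ov)(w)$, which is (c).

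Finally, for (c)$\Rightarrow$(a) it suffices to use (c) at $0$. On one hand $D(\sub\ph)(\ox,\ov)(0)=N_{K_\ph(\ox,\ov)}(0)=\nabla\Phi(\ox)^*N_{K_g(\Phi(\ox),\olm)}(0)=\nabla\Phi(\ox)^*K_g(\Phi(\ox),\olm)^*$ by Proposition~\ref{phpro}(b); on the other hand, the second-order chain rule for the coderivative under the SOQC (\cite{mr,ms16}) together with \eqref{cal2d} gives $D^*(\sub\ph)(\ox,\ov)(0)=\nabla\Phi(\ox)^*D^*(\sub g)(\Phi(\ox),\olm)(0)=\nabla\Phi(\ox)^*\big(K_g(\Phi(\ox),\olm)^*-K_g(\Phi(\ox),\olm)^*\big)$. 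Since $D^*(\sub g)(\Phi(\ox),\olm)(0)=K_g(\Phi(\ox),\olm)^*-K_g(\Phi(\ox),\olm)^*$ by \eqref{cal2d}, the SOQC \eqref{soqc} makes $\nabla\Phi(\ox)^*$ one-to-one on this difference space, so (c) forces $K_g(\Phi(\ox),\olm)^*$ to coincide with it and hence to be a linear subspace; then $K_g(\Phi(\ox),\olm)=N_{\sub g(\Phi(\ox))}(\olm)$ is a subspace, i.e.\ $\olm\in\ri\sub g(\Phi(\ox))$, and Proposition~\ref{phpro}(d) gives $\ov\in\ri\sub\ph(\ox)$. The main obstacle is the implication (b)$\Rightarrow$(a): extracting the relative-interior condition from mere Clarke regularity of the highly nonconvex set $\gph\sub\ph$ hinges on the graphically Lipschitzian structure underlying Lemma~\ref{subsp} together with the identification $D(\sub\ph)(\ox,\ov)=\sub(\tfrac12\d^2\ph(\ox,\ov))$ and the chain rule of Proposition~\ref{ssub}, whereas the self-adjointness computation in (a)$\Rightarrow$(c) and the injectivity bookkeeping in (c)$\Rightarrow$(a) are routine once the description of $\d^2\ph(\ox,\ov)$ is in hand.
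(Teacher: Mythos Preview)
Your proof is correct; the implications (a)$\Leftrightarrow$(b) match the paper's argument exactly. Where you diverge is in (a)$\Rightarrow$(c) and (c)$\Rightarrow$(a).

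For (a)$\Rightarrow$(c), the paper proceeds via the outer function: it uses Proposition~\ref{phpro}(d) to pass from $\ov\in\ri\sub\ph(\ox)$ to $\olm\in\ri\sub g(\Phi(\ox))$, then invokes \cite[Corollary~3.8]{hjs} to obtain $D(\sub g)(\Phi(\ox),\olm)=D^*(\sub g)(\Phi(\ox),\olm)$, and finishes by applying the two chain rules \eqref{gdph} (from \cite[Theorem~7.2]{mms0}) and \eqref{coph} (from \cite[Theorem~4.3]{mr}). Your route is more self-contained: having already shown in (a)$\Rightarrow$(b) that $T_{\gph\sub\ph}(\ox,\ov)=L$ is an $n$-dimensional subspace of the form $\{(w,Aw+z)\mid w\in S,\,z\in S^\perp\}$ with $A$ symmetric, you exploit the ``Lagrangian'' symmetry $\la b,a'\ra=\la a,b'\ra$ to identify $L^\perp$ with the coordinate-flipped copy of $L$, and then read off $D=D^*$ directly from regularity. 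This avoids both the coderivative chain rule from \cite{mr} and the auxiliary result \cite[Corollary~3.8]{hjs}, at the price of relying on (a)$\Rightarrow$(b) rather than proving (a)$\Rightarrow$(c) independently.

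For (c)$\Rightarrow$(a), the paper compares domains: it observes that $\dom D^*(\sub\ph)(\ox,\ov)$ is a linear subspace via \eqref{coph} and \eqref{domgd}, hence so is $\dom D(\sub\ph)(\ox,\ov)=K_\ph(\ox,\ov)$, and concludes. You instead evaluate (c) at $w=0$, use the SOQC to make $\nabla\Phi(\ox)^*$ injective on $K_g(\Phi(\ox),\olm)^*-K_g(\Phi(\ox),\olm)^*$, deduce that $K_g(\Phi(\ox),\olm)^*$ is a subspace, and return to $\ph$ via Proposition~\ref{phpro}(d). Both arguments use the coderivative chain rule \eqref{coph}; yours extracts slightly more (namely the relative interior condition for $\olm$ first), while the paper's is a touch shorter.
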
 
 \begin{proof} Suppose that  $\ov\in \ri \sub \ph(\ox) $. It follows from  Theorem~\ref{prot} that $\sub \ph$ is strictly proto-differentiable at $\ox$ for $\ov$.
 This is equivalent to saying that $ \rt_{\gph \sub \ph}(\ox,\ov)=\widetilde{T}_{\gph \sub \ph}(\ox,\ov)$, which implies that $\rt_{\gph \sub \ph}(\ox,\ov)= T_{\gph \sub \ph}(\ox,\ov)$. 
 By \cite[Corollary~6.29]{rw}, the latter is equivalent to  saying that $\rN_{\gph\sub \ph}(\ox,\ov)=N_{\gph\sub \ph}(\ox,\ov)$, which proves (b). 
 Conversely, assume that (b) holds. As pointed out above, this implies that $\rt_{\gph \sub \ph}(\ox,\ov)= T_{\gph \sub \ph}(\ox,\ov)$.
 We claim that $K_{\ph}(\ox,\ov)$ is a linear subspace. 
 By Proposition~\ref{phpro}(b), we have    $K_{\ph}(\ox,\ov)=N_{\sub \ph(\ox)}(\ov)$. Thus, to verify our claim, it suffices to show that if $w\in K_{\ph}(\ox,\ov)$, then $-w\in K_{\ph}(\ox,\ov)$.
 Take $w\in K_{\ph}(\ox,\ov)$ and observe  from   \cite[Theorem~7.2]{mms0} that $\dom D(\sub \ph)(\ox,\ov)=K_{\ph}(\ox,\ov)$. So, we find $u\in \R^n$ such that $u\in D(\sub \ph)(\ox,\ov)(w)$ or equivalently 
 $(w,u)\in T_{\gph \sub \ph}(\ox,\ov)$. By Lemma~\ref{subsp} and $\rt_{\gph \sub \ph}(\ox,\ov)= T_{\gph \sub \ph}(\ox,\ov)$, we conclude that $(-w,-u)\in T_{\gph \sub \ph}(\ox,\ov)$. This can be translated as
 $-u\in D(\sub \ph)(\ox,\ov)(-w)$. Thus, we get that $-w\in \dom D(\sub \ph)(\ox,\ov)=K_{\ph}(\ox,\ov)$, which proves our claim. Since $N_{\sub \ph(\ox)}(\ov)$ is a linear subspace, we arrive at  $\ov\in \ri \sub \ph(\ox) $, and hence obtain (a). 
 
 Now,  assume (a) holds. This implies via Proposition~\ref{phpro}(d) that      $\olm\in \ri \sub g(\Phi(\ox))$, where $\olm$ is the unique Lagrange multiplier in $\Lm(\ox,\ov)$. Appealing to  \cite[Corollary~3.7]{hjs} indicates that 
 the condition $\olm\in \ri \sub g(\Phi(\ox))$ is equivalent to 
 \begin{equation}\label{riq}
 D(\sub g)(\Phi(\ox), \olm)=D^*(\sub g)(\Phi(\ox), \olm).
 \end{equation}
 Recall that under the SOQC \eqref{soqc} it holds that  $\Lm(\ox,\ov)=\{\olm\}$. Since the basic constraint qualification \eqref{bcq1} holds, it follows from \cite[Theorem~7.2]{mms0} that 
\begin{equation}\label{gdph}
 D(\sub \ph)(\ox,\ov)(w)=\nabla^2\la \olm,\Phi\ra(\ox)w+\nabla\Phi(\ox)^*D(\sub g)(\Phi(\ox), \olm)(\nabla\Phi(\ox)w).
\end{equation}
 Also, one can see from \cite[Theorem~4.3]{mr} that 
\begin{equation}\label{coph}
 D^*(\sub \ph)(\ox,\ov)(w)=\nabla^2\la \olm,\Phi\ra(\ox)w+\nabla\Phi(\ox)^*D^*(\sub g)(\Phi(\ox), \olm)(\nabla\Phi(\ox)w).
\end{equation}
 Combining these and \eqref{riq} proves that $D(\sub \ph)(\ox,\ov)=D^*(\sub \ph)(\ox,\ov)$. Conversely, suppose that  (c) holds.
 This, in particular, indicates that $\dom D(\sub \ph)(\ox,\ov)=\dom D^*(\sub \ph)(\ox,\ov) $. 
 By the second equation in \eqref{domgd} and \eqref{coph}, we get
 $$
 \dom D^*(\sub \ph)(\ox,\ov)=\big\{w\in \R^n|\; \nabla\Phi(\ox)w\in \spann\{K_g(\Phi(\ox), \olm)\}\big\},
 $$
 which clearly is a linear subspace. Thus,  $\dom D(\sub \ph)(\ox,\ov)=K_{\ph}(\ox,\ov)=N_{\sub \ph(\ox)}(\ov)$ is a linear subspace. By Proposition~\ref{phpro}(c), we get $\ov\in \ri \sub \ph(\ox)$ and hence obtain (a).
 \end{proof}
 
We close this section by considering a special case of the polyhedral function $g$ in the composite form \eqref{CF}. 
Assume that $\Th$ is a subset of $\R^n$ and $\ox\in \Th$ and that there exists a neighborhood $O$ of $\ox$ on which  $\Th$
has the   representation
\begin{equation}\label{CF2}
\Th\cap O=\big\{x\in O|\, \Phi(x)\in C\big\},
\end{equation}
where  $C$ is a polyhedral convex set in $\R^m$ and that $\Phi$ is taken from \eqref{CF}. This clearly fits into the composite form \eqref{CF}
by taking that $g=\dd_C$. An important example of the set $\Th$ with the representation \eqref{CF2} is when the polyhedral convex set $C=\{0\}$ in $\R^m$. In this case, it follows from 
Example~\ref{point}(b) that the SOQC \eqref{soqc} amounts   to  $\nabla \Phi(\ox)$ having full rank. Note that a set $\Th\subset \R^n$ is said to be a  {\em ${\cal C}^2$-smooth manifold} around a point $\ox \in \Th$ if there exists 
a neighborhood $O$ of $\ox$ on which $\Th$ has the representation \eqref{CF2} with $C=\{0\}\subset\R^m$ and $\nabla \Phi(\ox)$ has full rank.
 For such a set, it was shown recently in \cite[Theorem~6.3]{lw} via a different approach that   both properties  in Theorem~\ref{regch}(b)-(c)
 hold. It is important to mention that our results, Theorems~\ref{prot} and \ref{regch}, go one step further and  show that, indeed, the regular tangent cone and the paratingent cone to $\gph N_\Th$ coincide
when $\Th$ is a ${\cal C}^2$-smooth manifold.


 \section{Stability Properties  of Generalized Equations}\label{sec04}
 
This section concerns the stability properties of the solution set to the canonical perturbation of the generalized equation 
\begin{equation}\label{GE}
\ou\in f(x)+ \sub \ph(x),
\end{equation}
where $f:\R^n\to \R^n$ is a ${\cal C}^1$ function and   $\ph$ is taken from \eqref{CF} and where $\ou$ is a fixed vector in $\R^n$. 
This generalized equation provides a rich and unified framework to study stability properties of variational inequalities and 
KKT systems from important classes of constrained and composite optimization problems. In particular, when $\ph=\dd_C$
with $C$   a polyhedral convex set in $\R^n$, it reduces to the generalized equation in  \cite{dr96} by Dontchev and Rockafellar  for which they demonstrated  that strong metric regularity 
and metric regularity of \eqref{GE} are equivalent. 
Recall that a set-valued mapping $F:\R^n \tto \R^m$ is called {\em strongly metrically regular} at $\ox$ for $\oy\in F(\ox)$ if $F^{-1}$ admits a Lipschitz continuous single-valued localization around $\oy$ for $\ox$, which means that 
there exist neighborhoods $U$ of $\ox$ and $V$ of $\oy$ such that the mapping $y\mapsto F^{-1}(y)\cap U$ is single-valued and Lipschitz continuous on $V$. According to 
\cite[Proposition~3G.1]{DoR14}, strong metric regularity of $F$ at $\ox$ for $\oy$ amounts to 
  $F$ being metrically regular at $\ox$ for $\oy$ and its inverse $F^{-1}$ admitting a single-valued  localization around $\oy$ for $\ox$.
  
Our first goal is to show that for the mapping $G:\R^n\tto\R^n$, defined  by 
\begin{equation}\label{mapg}
G(x)= f(x)+ \sub \ph(x), \;\;\;x\in \R^n,
\end{equation}
 metric regularity  and strong metric regularity at $\ox$ for $\ou$, where $\ox$ is a solution of \eqref{GE} satisfying  the nondegeneracy condition \eqref{nds} below, are equivalent. 
Our approach is different from  \cite{dr96} and relies upon the characterization of  strict proto-differentiability of the subgradient mapping $\sub \ph$, established in Theorem~\ref{prot}.
This forces us to confine our analysis to nondegenerate solutions to \eqref{GE}. Recall that $\ox$ is called a {\em nondegenerate} solution to the generalized equation \eqref{GE} if it satisfies the relative interior 
condition 
\begin{equation}\label{nds}
\ou -f(\ox)\in \ri \sub \ph(\ox).
\end{equation}
The following result presents an equivalent description of nondegenerate solutions to \eqref{GE}.
\begin{Proposition}\label{chnon} Assume that $\ox$ is a solution to the generalized equation \eqref{GE} and that the basic constraint qualification in \eqref{bcq1} holds at $\ox$. Then the following properties are equivalent:
\begin{itemize}[noitemsep,topsep=2pt]
\item [ \rm {(a)}]  $\ox$ is a nondegenerate solution to \eqref{GE};
\item [ \rm {(b)}]    the critical cone $K_{\ph}(\ox,\ou-f(\ox))$ is a linear subspace.
\end{itemize}
\end{Proposition}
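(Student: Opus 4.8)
The plan is to deduce the equivalence directly from the two structural facts about the subgradient mapping recorded in Proposition~\ref{phpro}, both of which are available here because the basic constraint qualification \eqref{bcq} is assumed at $x=\ox$. Since $\ox$ solves \eqref{GE}, the vector $\ov:=\ou-f(\ox)$ lies in $\sub\ph(\ox)$, so the critical cone $K_\ph(\ox,\ov)$ appearing in (b) is well defined, and the nondegeneracy condition \eqref{nds} says precisely that $\ov\in\ri\sub\ph(\ox)$. Hence it suffices to show that, under \eqref{bcq}, the critical cone $K_\ph(\ox,\ov)$ is a linear subspace if and only if $\ov\in\ri\sub\ph(\ox)$.

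First I would rewrite the critical cone as a normal cone to a convex set: by Proposition~\ref{phpro}(b), $K_\ph(\ox,\ov)=N_{\sub\ph(\ox)}(\ov)$, where $\sub\ph(\ox)$ is convex (and in fact polyhedral, being the image of $\sub g(\Phi(\ox))$ under $\nabla\Phi(\ox)^*$ by \eqref{laset}, nonempty because \eqref{bcq} holds). Next I would invoke Proposition~\ref{phpro}(c): since $\sub\ph(\ox)=\rs\ph(\ox)$ and $\sub\ph(\ox)$ is convex, $N_{\sub\ph(\ox)}(\ov)$ is a linear subspace if and only if $\ov\in\ri\sub\ph(\ox)$. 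Chaining the two equivalences, $K_\ph(\ox,\ov)$ is a linear subspace $\iff$ $\ov\in\ri\sub\ph(\ox)$ $\iff$ \eqref{nds} holds $\iff$ $\ox$ is a nondegenerate solution of \eqref{GE}, which is exactly the asserted equivalence of (a) and (b).

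I do not expect a genuine obstacle: the proposition is essentially a bookkeeping consequence of Proposition~\ref{phpro}(b)--(c), and the only point needing attention is confirming that its hypotheses are met, which is immediate ($\ox$ solves \eqref{GE}, so $\ov\in\sub\ph(\ox)$; and \eqref{bcq} is assumed at $x=\ox$). For a reader who prefers a self-contained version, the step ``normal cone is a subspace $\iff$ relative-interior point'' is the classical convex-analysis fact that for a convex set $S$ and $\ov\in S$ one has $N_S(\ov)$ a linear subspace if and only if $N_S(\ov)=(\para\{S-\ov\})^{\perp}$, which holds if and only if $\ov\in\ri S$; applying it with $S=\sub\ph(\ox)$ and combining with $K_\ph(\ox,\ov)=N_{\sub\ph(\ox)}(\ov)$ completes the argument.
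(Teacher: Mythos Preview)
Your proposal is correct and follows essentially the same route as the paper: both identify $K_\ph(\ox,\ou-f(\ox))=N_{\sub\ph(\ox)}(\ou-f(\ox))$ via \eqref{criph} and then use the convex-analysis fact that the normal cone to a convex set is a linear subspace if and only if the point is in the relative interior. Your write-up is a bit more detailed (explicitly citing Proposition~\ref{phpro}(c) and supplying a self-contained justification of the relative-interior characterization), but the argument is the same.
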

\begin{proof}  According to \eqref{criph}, we have $K_{\ph}(\ox,\ou -f(\ox))=N_{\sub \ph(\ox)}(\ou -f(\ox))$. Since $\sub \ph(\ox)$
is  convex, we can conclude that $K_{\ph}(\ox,\ou-f(\ox))$ is a linear subspace if and only if \eqref{nds} is satisfied. This proves the equivalence of (a) and (b).
\end{proof}

 To proceed, define  the solution mapping $S: \R^n \tto \R^n$ to the canonical perturbation of   \eqref{GE}  by
\begin{equation}\label{mr33}
S(u):= G^{-1}(u)=\big\{x\in \R^n\, \big|\, u\in f(x)+\partial \ph(x)\big\}, \quad u\in \R^n.
\end{equation}

 We begin with the following 
result in which metric regularity of $G$ will be characterized.
\begin{Theorem} \label{mrch}Assume that $\ox$ is a nondegenerate solution to \eqref{GE} and that the SOQC in \eqref{soqc} holds at $(\ox,\olm)$, where $\olm$ is the unique vector in $\Lm(\ox,\ou-f(\ox))$.
Set $\K:=K_\ph(\ox,\ou-f(\ox))$ and $A:=\nabla f(\ox)+\nabla^2\la \olm,\Phi\ra(\ox)$. Then
 the following properties are equivalent:
 \begin{itemize}[noitemsep,topsep=2pt]
\item [ \rm {(a)}]   the mapping $G$ in \eqref{mapg} is metrically regular at $\ox$ for $\ou$;
\item [ \rm {(b)}]  $\big\{w\in \R^n|\; A^*w\in \K^{\perp}\big\}\cap \K=\{0\}$;
\item [ \rm {(c)}]   $A\K+\K^\perp = \R^n$;
\item [ \rm {(d)}]  $DG(\ox, \ou) $ is surjective, meaning that $\rge DG(\ox, \ou)=\R^n$. 
\end{itemize}
\end{Theorem}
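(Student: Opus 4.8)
The plan is to compute the graphical derivative $DG(\ox,\ou)$ and the coderivative $D^*G(\ox,\ou)$ explicitly, observe that nondegeneracy collapses both to the same elementary sublinear form, and then read off the four equivalences. Throughout, set $\bar v:=\ou-f(\ox)$, so that $\bar v\in\sub\ph(\ox)$ because $\ox$ solves \eqref{GE}, and $\bar v\in\ri\sub\ph(\ox)$ by \eqref{nds}; write $Q:=K_g(\Phi(\ox),\olm)$. Since the SOQC forces the basic constraint qualification \eqref{bcq} at $\ox$ (Proposition~\ref{soqcg}(a)), Proposition~\ref{phpro}(d) gives $\olm\in\ri\sub g(\Phi(\ox))$, so $Q=N_{\sub g(\Phi(\ox))}(\olm)$ is a linear subspace and hence $N_Q(u)=Q^\perp$ for $u\in Q$ while $N_Q(u)=\emp$ otherwise. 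Moreover, by \eqref{criph} we have $\K=\{w\in\R^n\,|\,\nabla\Phi(\ox)w\in Q\}$, which is a linear subspace as the preimage of $Q$ under a linear map, and the normal-cone chain rule in Proposition~\ref{phpro}(b), evaluated at $w=0$, yields $\nabla\Phi(\ox)^*Q^\perp=N_\K(0)=\K^\perp$.

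Next I would invoke the sum rules for the graphical derivative and the coderivative of the sum of the $\C^1$ single-valued map $f$ and the set-valued map $\sub\ph$ — both exact, since $(x,v)\mapsto(x,f(x)+v)$ is a $\C^1$ diffeomorphism of $\R^n\times\R^n$ that preserves tangent and normal cones of graphs — obtaining $DG(\ox,\ou)(w)=\nabla f(\ox)w+D(\sub\ph)(\ox,\bar v)(w)$ and $D^*G(\ox,\ou)(y)=\nabla f(\ox)^*y+D^*(\sub\ph)(\ox,\bar v)(y)$. By Theorem~\ref{regch} (applicable since the SOQC holds and $\bar v\in\ri\sub\ph(\ox)$), $D^*(\sub\ph)(\ox,\bar v)=D(\sub\ph)(\ox,\bar v)$, and this common mapping is given by \eqref{gdph} together with $D(\sub g)(\Phi(\ox),\olm)=N_Q$ from \eqref{cal2d}; using the subspace reductions above and the symmetry of $\nabla^2\la\olm,\Phi\ra(\ox)$, this produces, with $A=\nabla f(\ox)+\nabla^2\la\olm,\Phi\ra(\ox)$,
$$DG(\ox,\ou)(w)=Aw+\K^\perp\quad\text{and}\quad D^*G(\ox,\ou)(w)=A^*w+\K^\perp\quad\text{for all }w\in\K,$$
while $DG(\ox,\ou)(w)=D^*G(\ox,\ou)(w)=\emp$ whenever $w\notin\K$.

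From here the equivalences fall out. For (c)$\Leftrightarrow$(d): $DG(\ox,\ou)$ is surjective iff $\rge DG(\ox,\ou)=\bigcup_{w\in\K}(Aw+\K^\perp)=A\K+\K^\perp$ equals $\R^n$. For (b)$\Leftrightarrow$(c): taking orthogonal complements, $(A\K+\K^\perp)^\perp=(A\K)^\perp\cap\K=\{w\in\R^n\,|\,A^*w\in\K^\perp\}\cap\K$, so $A\K+\K^\perp=\R^n$ holds iff $\{w\in\K\,|\,A^*w\in\K^\perp\}=\{0\}$. For (a)$\Leftrightarrow$(b): since $\ph$ is prox-regular and subdifferentially continuous at $\ox$ for $\bar v$ (Proposition~\ref{phpro}(a)), $\gph\sub\ph$, and hence $\gph G$, is locally closed around $(\ox,\ou)$, so the coderivative criterion for metric regularity (cf. \cite{rw,m06}) applies: $G$ is metrically regular at $\ox$ for $\ou$ iff $D^*G(\ox,\ou)^{-1}(0)=\{0\}$, and from the displayed formula $D^*G(\ox,\ou)^{-1}(0)=\{w\in\K\,|\,A^*w\in\K^\perp\}$, which is exactly the set appearing in (b).

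The one step demanding care is the middle paragraph: correctly threading \eqref{gdph} and Theorem~\ref{regch} together with the subspace collapse of $Q$ and the action of $\nabla\Phi(\ox)^*$ on $Q^\perp$, so that $DG(\ox,\ou)$ and $D^*G(\ox,\ou)$ emerge in the stated $A$/$A^*$ form; in particular one must be careful that it is $A^*$ (not $A$) that appears in the coderivative, which is why (b) is phrased with $A^*$. Once these two formulas are established, (c)$\Leftrightarrow$(d) is merely a restatement of surjectivity, (b)$\Leftrightarrow$(c) is a short orthogonality computation, and (a)$\Leftrightarrow$(b) is a single invocation of the Mordukhovich criterion.
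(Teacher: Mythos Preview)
Your proof is correct and follows essentially the same approach as the paper: both compute $D^*G(\ox,\ou)$ and $DG(\ox,\ou)$ via the sum rule, invoke Theorem~\ref{regch} to equate $D^*(\sub\ph)(\ox,\bar v)$ with $D(\sub\ph)(\ox,\bar v)$, reduce via \eqref{gdph}, \eqref{cal2d}, and Proposition~\ref{phpro}(b) to the form $A^*w+\K^\perp$ (resp.\ $Aw+\K^\perp$) for $w\in\K$, and then read off the equivalences from the Mordukhovich criterion, an orthogonal-complement computation, and the identification $\rge DG(\ox,\ou)=A\K+\K^\perp$. The only cosmetic differences are that the paper cites Proposition~\ref{chnon} directly for $\K$ being a subspace (whereas you argue via $\olm\in\ri\sub g(\Phi(\ox))$ and the preimage of $Q$), and you explicitly note local closedness of $\gph G$ via prox-regularity before invoking the coderivative criterion, which the paper leaves implicit.
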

\begin{proof} By \eqref{nds} and Theorem~\ref{regch},  we get $D(\sub \ph)(\ox,\ou-f(\ox))=D^*(\sub \ph)(\ox,\ou-f(\ox))$.
This, along with \eqref{gdph}, implies for any $w\in \R^n$ that 
\begin{align}
D^*G(\ox,\ou)(w)&=\nabla f(\ox)^*w+D^*(\sub \ph)(\ox,\ou-f(\ox))(w)\nonumber\\
&= A^*w+ \nabla\Phi(\ox)^*D(\sub g)(\Phi(\ox), \olm)(\nabla\Phi(\ox)w)\nonumber\\
&= A^*w+ \nabla\Phi(\ox)^*N_{K_g(\Phi(\ox), \olm)}(\nabla\Phi(\ox)w)\nonumber\\
&= A^*w+N_{\K}(w)\label{calco},
\end{align}
where the penultimate step results from \eqref{cal2d} and the last equality comes from Proposition~\ref{phpro}(b).  By Proposition~\ref{chnon},   $\K$ is a linear subspace, which brings us to 
$$
q\in D^*G(\ox,\ou)(w)\iff q-A^*w\in \K^\perp, \;\; w\in \K.
$$
It is well-known (cf. \cite[Theorem~9.43]{rw}) that metric regularity of $G$ at $\ox$ for $\ou$ can be characterized by the implication 
$$
0\in D^*G(\ox,\ou)(w)\implies w=0.
$$
 Combining this with the calculation of $D^*G(\ox,\ou)(w)$, obtained above, proves the equivalence of (a) and (b). 
Employing again the fact that $\K$ is a linear subspace, we get by \cite[Corollary~11.25(c)]{rw} that
 $$
 (A\K)^\perp =\big\{w\in \R^n|\; A^*w\in \K^{\perp}\big\},
 $$
 which implies  (b) and (c) are equivalent due to the fact that $\K$ is a linear subspace. Arguing similar to \eqref{calco}, we get for any $w\in \R^n$ that 
\begin{equation}\label{calgd}
 DG(\ox,\ou)(w)=Aw+ N_{\K}(w),
\end{equation}
 which brings us to 
\begin{equation}\label{calgd}
 q\in DG(\ox,\ou)(w) \iff q\in Aw+\K^\perp, \;\; w\in \K.
\end{equation}
 This tells us that $\rge DG(\ox,\ou)= A\K+\K^\perp$. Thus,   $\rge DG(\ox, \ou)=\R^n$ if and only if $A\K+\K^\perp=\R^n$, which demonstrates that (c) and (d) are equivalent.
\end{proof}

Next, we are going to show that metric regularity and strong metric regularity of the mapping $G$ in \eqref{mapg} are equivalent for nondegenerate solutions to \eqref{GE}.
To achieve our goal, we are going to utilize the following characterization of strong metric regularity from 
\cite[Theorem~4D.1]{DoR14}:  a set-valued mapping   $F:\R^n\tto\R^m$   is strongly metrically regular at $\ox$ for $\oy\in F(\ox)$ provided that it has a locally closed graph and 
the conditions 
 \begin{equation}\label{mr9}
0\in \widetilde DF(\ox, \oy)(w)\; \Longrightarrow\; w=0
\end{equation}
 and 
\begin{equation}\label{mr7}
\ox \in \liminf_{y\to \oy} F^{-1}(y)
\end{equation}
hold.  

\begin{Theorem}[equivalence of metric regularity and strong metric regularity] \label{mrsmr}
Assume that $\ox$ is a nondegenerate solution to \eqref{GE} and that the SOQC in \eqref{soqc} holds at $(\ox,\olm)$, where $\olm$ is the unique vector in $\Lm(\ox,\ou-f(\ox))$.
Set $\K:=K_\ph(\ox,\ou-f(\ox))$ and $A:=\nabla f(\ox)+\nabla^2\la \olm,\Phi\ra(\ox)$. Then the following properties hold.
\begin{itemize}[noitemsep,topsep=2pt]
\item [ \rm {(a)}]   The mapping $G$ in \eqref{mapg} is metrically regular at $\ox$ for $\ou$ if and only if it is strongly metrically regular at $\ox$ for $\ou$.
\item [ \rm {(b)}]   One of the equivalent properties {\em(}a{\rm)}-{\rm(}d{\rm)} in Theorem~{\rm\ref{mrch}} holds if and only if the solution mapping $S$ in \eqref{mr33} has a Lipschitz continuous localization $\sigma$ around $\ou$ for $\ox$, which 
is  ${\cal C}^1$ in a neighborhood of $\ou$ and 
$$
\nabla \sigma(\ou)=B\big(B^*AB\big)^{-1}B^*,
$$
 where $B\in \R^{n\times s}$ is a matrix whose columns form a basis for the linear subspace $\K$ and $s=\dim \K$.
\end{itemize}
\end{Theorem}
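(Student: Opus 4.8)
The plan is to prove both parts together by leveraging the characterization of strong metric regularity via the strict graphical derivative recalled just before the statement, together with the fact that nondegeneracy forces $\sub\ph$ to be strictly proto-differentiable at $\ox$ for $\ov:=\ou-f(\ox)$ (Theorem~\ref{prot}). First I would observe that since $\ox$ is a nondegenerate solution, $\ov\in\ri\sub\ph(\ox)$, so Theorem~\ref{prot} applies in a neighborhood $U$ of $(\ox,\ov)$; in particular $D_*(\sub\ph)(\ox,\ov)=D(\sub\ph)(\ox,\ov)$. Because $f$ is $\cal C^1$, the same identity transfers to $G$, giving $D_*G(\ox,\ou)=DG(\ox,\ou)$. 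By the calculation \eqref{calgd} in the proof of Theorem~\ref{mrch}, $DG(\ox,\ou)(w)=Aw+N_\K(w)$ with $\K$ a linear subspace, so the condition \eqref{mr9}, namely $0\in D_*G(\ox,\ou)(w)\Rightarrow w=0$, is literally property (b) of Theorem~\ref{mrch}, which is equivalent to metric regularity of $G$. This already shows that metric regularity implies the first of the two sufficient conditions for strong metric regularity.

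Next I would verify the ample parameterization / inner-limit condition \eqref{mr7}, that is, $\ox\in\liminf_{u\to\ou}S(u)$. Here I would use that metric regularity of $G$ at $\ox$ for $\ou$ is equivalent to the Aubin (Lipschitz-like) property of $S=G^{-1}$ at $\ou$ for $\ox$ (cf.\ \cite[Theorem~9.43]{rw}), which yields $S(u)\neq\emptyset$ near $\ou$ and $\dist(\ox,S(u))\le\kappa\|u-\ou\|\to 0$; this gives \eqref{mr7}. Since $\gph G$ is locally closed (as $\sub\ph$ has closed graph and $f$ is continuous), the cited characterization \cite[Theorem~4D.1]{DoR14} then delivers strong metric regularity of $G$ at $\ox$ for $\ou$. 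The reverse implication in (a) is trivial, since strong metric regularity always implies metric regularity. This proves (a).

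For (b), I would start from the equivalent properties in Theorem~\ref{mrch}; by part (a) they are equivalent to strong metric regularity of $G$, hence $S=G^{-1}$ admits a Lipschitz continuous single-valued localization $\sigma$ around $\ou$ for $\ox$. To identify $\sigma$ and show it is $\cal C^1$, I would argue that for $u$ near $\ou$ the point $x=\sigma(u)$ stays in a neighborhood where, by \eqref{cr21}-type stability of the critical cone under the relative interior condition (Theorem~\ref{stepc} / \eqref{cr21}), the graph of $\sub\ph$ coincides locally with the graph of a linear map: concretely $\gph\sub\ph$ near $(\ox,\ov)$ agrees with $\{(x,\ov+\nabla^2\la\olm,\Phi\ra(\ox)(x-\ox)+w^*)\mid x-\ox\in\K,\ w^*\in\K^\perp\}$ up to higher-order terms, using that $K_g(\Phi(x),\lm)$ is the fixed subspace $\K_0$ and $\sub g$ is affine in $\lm$ there. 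Reducing modulo $\K^\perp$ via the basis matrix $B$, the generalized equation $u\in f(x)+\sub\ph(x)$ becomes, for $x=\ox+Bp$, the square linear system $B^*(u-f(\ox))=B^*A B\,p+o(\cdot)$; property (c) of Theorem~\ref{mrch}, $A\K+\K^\perp=\R^n$, is exactly invertibility of $B^*AB$, so the implicit function theorem gives $\sigma$ of class $\cal C^1$ with $\sigma(\ou)=\ox$ and $\nabla\sigma(\ou)=B(B^*AB)^{-1}B^*$.

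The main obstacle I anticipate is the bookkeeping in (b): making rigorous the claim that near a nondegenerate solution the graph of $\sub\ph$ is, after the $B$-reduction, the graph of the linear map $p\mapsto B^*ABp$ restricted to the subspace $\K$, so that $G$ restricted to the affine slice $\ox+\rge B$ and projected by $B^*$ is genuinely a $\cal C^1$ (indeed affine plus smooth-perturbation) square system to which the classical implicit function theorem applies. This rests on the constancy of the critical cone $K_g(\Phi(x),\lm)$ on the relevant neighborhood, which is exactly \eqref{crieq}/\eqref{cr21}, together with the $\cal C^2$ smoothness of $\Phi$ and $\cal C^1$ smoothness of $f$ to absorb the curvature terms; the surjectivity condition (c) then precisely matches the nonsingularity hypothesis of the implicit function theorem, and the formula for $\nabla\sigma(\ou)$ falls out by differentiating the reduced equation.
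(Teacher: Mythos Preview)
Your argument for part (a) is essentially the paper's: strict proto-differentiability of $\sub\ph$ (hence of $G$) forces $D_*G(\ox,\ou)=DG(\ox,\ou)$, and then the criterion \eqref{mr9}--\eqref{mr7} from \cite[Theorem~4D.1]{DoR14} upgrades metric regularity to strong metric regularity. One imprecision: via \eqref{calgd} the condition $0\in DG(\ox,\ou)(w)\Rightarrow w=0$ reads $Aw\in\K^\perp,\ w\in\K\Rightarrow w=0$, whereas Theorem~\ref{mrch}(b) is the corresponding statement with $A^*$ in place of $A$. These are \emph{equivalent} (both amount to nonsingularity of the square matrix $B^*AB$, since $\ker M=\{0\}\iff\ker M^*=\{0\}$), but not ``literally'' the same; the paper makes this step explicit by first deducing invertibility of $B^*AB$ from condition (c), $A\K+\K^\perp=\R^n$, and then using it to verify \eqref{mr9}.

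For part (b) your route diverges from the paper, and the sketch has a genuine gap. You propose to restrict $G$ to the affine slice $\ox+\rge B$, project by $B^*$, and invoke the classical implicit function theorem. But a solution $x$ of $u\in G(x)$ near $\ox$ must satisfy the \emph{curved} constraint $\Phi(x)-\Phi(\ox)\in K_g(\Phi(\ox),\olm)$ (this is what pins $(\Phi(x),\lm)$ to the relevant face of $\gph\sub g$ under \eqref{crieq}), not the linear constraint $x-\ox\in\K$. Because $\Phi$ is genuinely nonlinear, points $\ox+Bp$ need not parametrize the admissible $x$, so your reduced system $B^*(u-f(\ox))=B^*ABp+o(\cdot)$ does not represent $G$ on a neighborhood; it only captures the linearization. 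One can salvage the idea by smoothly parametrizing the $\cal C^1$ manifold $\{x:\Phi(x)-\Phi(\ox)\in K_g(\Phi(\ox),\olm)\}$ and the affine fibers $\nabla\Phi(x)^*(\olm+K_g(\Phi(\ox),\olm)^\perp)$, but that is real additional work you have not done.

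The paper avoids this entirely: by Theorem~\ref{prot}, $G$ is strictly proto-differentiable not only at $(\ox,\ou)$ but at all nearby $(x,u)\in\gph G$; since $\sigma$ is a Lipschitz localization of $S=G^{-1}$ whose graph locally coincides with $\gph S$, strict proto-differentiability transfers to $\sigma$ at every $u$ near $\ou$, and \cite[Proposition~3.1]{r85} then yields strict differentiability of $\sigma$ on a neighborhood, hence $\sigma\in{\cal C}^1$. The formula for $\nabla\sigma(\ou)$ is then read off directly from \eqref{calgd} by solving $p-Aw\in\K^\perp$, $w\in\K$ for $w=\nabla\sigma(\ou)p$ via $w=Bq$, $q=(B^*AB)^{-1}B^*p$ --- which is exactly the linear algebra you already did.
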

\begin{proof} We begin with the proof of (a). If $G$ is strongly metrically regular at $\ox$ for $\ou$, it clearly enjoys metric regularity at $\ox$ for $\ou$. Suppose now that 
$G$ is metrically regular at $\ox$ for $\ou$. According to the discussion above, it suffices to verify the validity of \eqref{mr9} and \eqref{mr7}. The latter directly results from 
the estimate \eqref{mr8} for $F=G$. To prove the former, take $w\in \R^n$ such that $0\in \widetilde DG(\ox, \ou)(w)$. We know from the SOQC and Theorem~\ref{regch}
that $\sub \ph$ is strictly proto-differentiable at $\ox$ for $\ou-f(\ox)$ due to \eqref{nds}. This, coupled with \cite[Proposition~5.3]{hjs}, indicates that $G$ 
is strictly proto-differentiable at $\ox$ for $\ou$. The discussion prior to Theorem~\ref{prot} tells us that strict proto-differentiability of $G$ at $\ox$ for $\ou$ yields 
$\widetilde DG(\ox, \ou)=DG(\ox, \ou)$. Thus, by  \eqref{calgd}, the condition $0\in \widetilde DG(\ox, \ou)(w)$ is equivalent to saying that $-Aw\in \K^\perp$ and $w\in \K$. 
Since $\K$ is a linear subspace of dimension $s$, we find a matrix $B\in \R^{n\times s}$ whose columns form a basis for $\K$. This means that $\K=\rge B$,
which is equivalent to $\K^\perp=\ker B^*$. This allows us to deduce from the conditions $-Aw\in \K^\perp$ and $w\in \K$ that there exists $q\in \R^s$ such that $w=Bq$
and $B^*ABq=0$. We are going to show that the $s\times s$ matrix $B^*AB$ is nonsingular. To do so, pick $p\in \R^n$.  Since $G$ is   metrically regular at $\ox$ for $\ou$, 
it results from Theorem~\ref{mrch}(c) that there are $w'\in\K$ and $p'\in \K^\perp$ for which we have $p=Aw'+p'$. By $\K=\rge B$, we find $d\in \R^s$ that $w'=Bd$.
Combining these and using $\K^\perp=\ker B^*$, we arrive at 
$$
B^*p=B^*(Aw'+p')=B^*Aw'=B^*ABd.
$$
Because  $p$ was chosen arbitrarily, we get $\rge B^*\subset \rge (B^*AB)$. Since the opposite inclusion always holds, we obtain $\rge B^*= \rge (B^*AB)$.
This, together with the fact that $B$ has full column rank, implies that $ \rge (B^*AB)=\R^s$ and thus confirms that $B^*AB$ is nonsingular. Consequently, 
the condition  $B^*ABq=0$ results in $q=0$ and so $w=Bq=0$. This proves \eqref{mr9} and completes the proof of (a).

Turning to the proof (b), remember first that $S=G^{-1}$. So, the claimed equivalence in (b) falls directly out of Theorem~\ref{mrch} and (a).
Next, we are going to show that if one of the equivalent properties { (}a{)}-{(}d{)} in Theorem~{\rm\ref{mrch}} holds, the 
 Lipschitz continuous localization of $S$ around $\ou$ for $\ox$ is ${\cal C}^1$ around $\ou$. To this end,  we find neighborhoods $U$ of $\ou$ and $V$
of $\ox$ such that the mapping $u\mapsto S(u)\cap V$ is single-valued and Lipschitz continuous on $U$. Define the function $\sigma:U\to V$ by $\sigma(u)=S(u)\cap V$.
We claim that $\sigma$ is ${\cal C}^1$ in a neighborhood of $\ou$. To justify it, remember from the proof of (a) that $G$ is strictly proto-differentiable at $\ox$ for $\ou$.
Appealing to Theorem~\ref{prot} allows us to ensure that $G$ enjoys the same property at $x$ for $u$ whenever $(x,u)\in \gph G$ is sufficiently close to $(\ox,\ou)$.
Shrinking $U$ and $V$ if necessary, we can assume without loss of generality that $G$ is strictly proto-differentiable at $x$ for $u$ for any $(x,u)\in (\gph G)\cap  (V\times U)$.
Take any such a pair $(x,u)$ and conclude from $S^{-1}=G$ that  $S^{-1}$ is strictly proto-differentiable  at $u$ for $x$. Since strict proto-differentiability is a local property of $\gph S$ and since $\gph \sigma=(\gph S)\cap (U\times V)$, $\sigma$ is strictly proto-differentiable at $u$ for $x$. Employing \cite[Proposition~3.1]{r85} tells us that $\sigma$ is strictly differentiable on $U$, which is 
equivalent to saying that  $\sigma$ is ${\cal C}^1$ on $U$ due to \cite[Exercise~1D.8]{DoR14}.

To justify the claimed formula for the Jacobian matrix of $\sigma$ at $\ou$, take $p\in \R^n$   and set   $w=\nabla \sigma(\ou)p=D\sigma(\ou)(p)$. This implies that 
$p\in DG(\ox,\ou)(w)$. By  \eqref{calgd}, the latter means that $p-Aw\in \K^\perp$ and  $w\in \K$.  Recall that  $\K=\rge B$ and  $\K^\perp=\ker B^*$.
Thus, we find $q\in \R^s$ such that $w=Bq$, which leads us to 
$$
0=B^*(p-Aw)=B^*p-B^*ABq.
$$
Since the matrix $B^*AB$ is nonsingular, we get 
$$
\nabla \sigma(\ou)p=w=Bq=B(B^*AB)^{-1}B^*p,
$$
which completes the proof of theorem. 
\end{proof}

The  equivalence of metric regularity and strong metric regularity for generalized equations was first achieved in  \cite[Theorem~3]{dr96} by Donchev and Rockafellar  without 
assuming  the nondegeneracy for  solutions.  However, the framework in \cite{dr96} is associated with $\ph=\dd_C$    in \eqref{GE},   $C$ being a polyhedral convex set, 
and thus is narrower than the generalized equation in \eqref{GE}. Moreover, the  approach therein relies heavily on Robinson's results in \cite{rob3} and did not utilize strict proto-differentiability. The new approach of dealing with the latter equivalence was first presented in \cite{hjs}, where the main attention was given to polyhedral functions. 

\begin{Remark} \label{ghpk} A closer look into the proof of Theorem~\ref{mrsmr} reveals that the underlying property that allows us to achieve the equivalence of 
metric regularity and strong metric regularity  for the generalized equation in \eqref{GE} is strict proto-differentiability of the mapping $G$ in \eqref{mapg} at $\ox$ for $\ou$, which is 
indeed equivalent to the same property of the subgradient mapping $\sub \ph$ at $\ox$ for $\ou-f(\ox)$. As shown in Theorem~\ref{prot}, the latter amounts to the nondegeneracy of the  solution $\ox$
to \eqref{GE}. Since strict proto-differentiability is a local property of $\gph \sub \ph$, we could obtain the same equivalence  if $\sub \ph$ in \eqref{GE} is replaced with a graphical localization of $\sub \ph$ around $(\ox,\ou-f(\ox))$.
To elaborate more, suppose that $T:\R^n\tto\R^n$ is a set-valued mapping  such that $\gph T$ and $\gph \sub \ph$ coincide locally around $(\ox,\ou-f(\ox))$. In this case, if all the assumptions in
Theorem~\ref{mrsmr} are satisfied, then both conclusions  (a) and (b) in the latter result hold for the generalized equation 
$$
\ou\in f(x)+T(x).
$$ 
This observation will be utilized later in this section while we are proving continuous differentiability of the proximal mapping of $\ph$. 
\end{Remark}

An interesting case of   the generalized equation \eqref{GE} occurs when the composite function $\ph$ therein is the indicator function of a ${\cal C}^2$-smooth manifold $\Th$, 
defined by \eqref{CF2} with $C=\{0\}$. In this case, the relative interior condition in \eqref{nds} holds automatically because $\sub \ph(\ox)=\rge \nabla \Phi(\ox)^*$, which is a 
linear subspace of $\R^n$ and thus is  relatively  open. This tells us that every solution to \eqref{GE} with $\ph=\dd_\Th$ is nondegenerate. The next result shows that 
metric regularity and strong metric regularity in this case are always equivalent. 
\begin{Corollary} Assume that $\Th$ is  a ${\cal C}^2$-smooth manifold around $\ox$ in $\R^n$ and that  $\ox$ is a solution to the generalized equation in \eqref{GE}, where $\ph=\dd_\Th$.
Then the mapping $G$ in \eqref{mapg} is metrically regular at $\ox$ for $\ou$ if and only if it is strongly metrically regular at $\ox$ for $\ou$.
\end{Corollary}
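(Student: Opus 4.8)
The plan is to obtain this corollary directly from Theorem~\ref{mrsmr}(a), so that the only real work is checking that the hypotheses of that theorem are met in the present situation. First I would recall that, by definition of a ${\cal C}^2$ smooth manifold around $\ox$, the set $\Th$ admits a representation $\Th\cap O=\{x\in O\mid \Phi(x)\in C\}$ with $C=\{0\}\subset\R^m$ and with $\nabla\Phi(\ox)$ of full rank. Hence $\ph=\dd_\Th$ fits the composite form \eqref{CF} with polyhedral function $g=\dd_{\{0\}}$, and by Example~\ref{point}(b) (the case in which the ``equality block'' has full size $s=m$), the SOQC \eqref{soqc} for this $g$ is exactly the requirement that $\nabla\Phi(\ox)$ have full rank, which therefore holds automatically. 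In particular the basic constraint qualification \eqref{bcq} holds at $x=\ox$ as well, since $\ker\nabla\Phi(\ox)^*=\{0\}$.

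Next I would verify that $\ox$ is a nondegenerate solution of \eqref{GE} and that the associated Lagrange multiplier is the unique element of $\Lm(\ox,\ou-f(\ox))$. Because $\sub g(\Phi(\ox))=N_{\{0\}}(0)=\R^m$, the chain rule gives $\sub\ph(\ox)=\nabla\Phi(\ox)^*\R^m=\rge\nabla\Phi(\ox)^*$, which is a linear subspace of $\R^n$ and thus a relatively open set; consequently $\ou-f(\ox)\in\sub\ph(\ox)=\ri\sub\ph(\ox)$, i.e. the nondegeneracy condition \eqref{nds} is satisfied (as already observed just before the statement of the corollary). Moreover, since $\nabla\Phi(\ox)$ has full rank, the linear map $\nabla\Phi(\ox)^*$ is injective, so the equation $\nabla\Phi(\ox)^*\lm=\ou-f(\ox)$ has a unique solution $\olm$, i.e. $\Lm(\ox,\ou-f(\ox))=\{\olm\}$; this is also a consequence of Proposition~\ref{soqcg}(b) once the SOQC is in force.

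With all of these facts assembled, the assumptions of Theorem~\ref{mrsmr} are satisfied, and part~(a) of that theorem yields immediately that the mapping $G$ in \eqref{mapg} is metrically regular at $\ox$ for $\ou$ if and only if it is strongly metrically regular at $\ox$ for $\ou$. I do not expect a genuine obstacle here: the only point requiring care is the bookkeeping that the somewhat degenerate choice $g=\dd_{\{0\}}$ still complies with the running hypotheses of the preceding results (SOQC, basic constraint qualification, singleton Lagrange multiplier, nondegeneracy of the solution), each of which collapses to the full-rank condition that is built into the definition of a ${\cal C}^2$ smooth manifold.
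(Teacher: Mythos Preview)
Your proposal is correct and follows essentially the same approach as the paper: you verify that the SOQC reduces to the full-rank condition built into the definition of a ${\cal C}^2$ smooth manifold, observe that $\sub\ph(\ox)=\rge\nabla\Phi(\ox)^*$ is a linear subspace so that the nondegeneracy condition \eqref{nds} holds automatically, and then invoke Theorem~\ref{mrsmr}(a). The paper's proof is the one-line version of exactly this argument, relying on the discussion immediately preceding the corollary together with Theorem~\ref{mrsmr}(a).
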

\begin{proof} This results from our  discussion prior to this corollary and Theorem~\ref{mrsmr}(a).
\end{proof}

Our final task is to study continuous differentiability   of the proximal mapping of the composite function $\ph$ in \eqref{CF}. Recall that proximal mapping 
of $\ph$ for a parameter value  $r>0$,  denoted by  $\prox_{r\ph}$, is  defined  by 
\begin{equation*}\label{proxmap}
\prox_{r \ph}(x)= \argmin_{w\in \R^n}\Big\{\ph(w)+\frac{1}{2r}\|w-x\|^2\Big\}.
\end{equation*}
To ensure that $\prox_{r\ph}$ is nonempty, we have to assume that $\ph$ is prox-bounded, meaning that for some $\al\in \R$, the function $\ph+\al\|\cdot\|^2$ is bounded from below on $\R^n$; see \cite[Exercise~1.24]{rw} for more equivalent 
descriptions. An important instance of $\ph$
for which this condition automatically holds is when $\ph=\dd_\Th$ with  $\Th$   taken from \eqref{CF2}. In this case, the proximal mapping $\prox_{r\ph}$ boils down to the projection mapping 
$P_\Th$. Below, we collect some important properties of the proximal mapping of $\ph$, which will be utilized in our characterization of continuous differentiability of $\prox_{r\ph}$ in the rest of this section. 

\begin{Lemma}\label{proj1}
 Assume that $\ph$ has the representation \eqref{CF} around $\ox\in \R^n$ and  $\ov\in \sub \ph(\ox)$, and that the basic constraint qualification \eqref{bcq1} holds at $\ox$.
 Suppose further that $\ph$ is prox-bounded. 
 Then there exist positive constants $\ve$ and $\rho$ such that for any $r\in (0,1/\rho)$, there is a neighborhood $U_r$ of $\ox+r\ov$ on which $\prox_{r\ph}$ is nonempty, single-valued,   Lipschitz continuous and 
 can be calculated by 
\begin{equation}\label{proj3}
\prox_{r\ph}=(I+rT_\ve)^{-1},
\end{equation}
where  the set-valued mapping $T_\ve:\R^n\tto \R^n$ is defined by 
\begin{equation}\label{logh}
T_\ve(x)=\begin{cases}
\sub \ph(x)\cap \B_{\ve}(\ov)&\mbox{if}\;\; x\in \B_\ve(\ox),\\
\emptyset& \mbox{otherwise},
\end{cases}
\end{equation}
and where  $I$ stands for the $n\times n$ identity matrix. Moreover, we have 
\begin{equation}\label{mepr}
\nabla (e_r\ph)(x)=\frac{1}{r}(x-\prox_{r\ph}(x)), \;\; x\in U_r,
\end{equation}
where the Moreau envelope function   $e_{r}\ph$ is defined by 
$$
e_r\ph(x)=\inf_{w\in \R^n}\big\{\ph(w)+\frac{1}{2r}\|w-x\|^2\big\}, \;\; x\in \R^n.
$$
 

\end{Lemma}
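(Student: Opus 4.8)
The plan is to reduce the statement to the standard theory of prox-regular functions. First I would invoke Proposition~\ref{phpro}(a): under the basic constraint qualification \eqref{bcq}, the function $\ph$ is prox-regular and subdifferentially continuous at $\ox$ for $\ov$. Let $\ve>0$ and $\rho>0$ be the constants furnished by the prox-regularity inequality \eqref{prox}; shrinking $\ve$ if necessary (which does not affect validity of \eqref{prox}), subdifferential continuity lets us also assume that $\ph(u)\le\ph(\ox)+\ve$ for every $(u,v)\in\gph T_\ve$ with $T_\ve$ as in \eqref{logh}. These $\ve,\rho$ will be the constants in the statement, and from now on I fix an arbitrary $r\in(0,1/\rho)$. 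The candidate localization of $\prox_{r\ph}$ near $\ox+r\ov$ is the truncated resolvent $(I+rT_\ve)^{-1}$, and the proof amounts to (i) showing this resolvent is single-valued and Lipschitz, (ii) matching it with the genuine proximal mapping on a small enough neighborhood $U_r$, and (iii) reading off \eqref{mepr} from Moreau-envelope calculus.

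For (i) the key estimate is that, for $(u,v)\in\gph T_\ve$ and $x:=u+rv$, a short computation that inserts $(u,v)$ into \eqref{prox} and expands $\|x'-x\|^2$ gives, for every $x'\in\B_\ve(\ox)$ with $\ph(x')\le\ph(\ox)+\ve$,
\begin{equation*}
\ph(x')+\frac{1}{2r}\|x'-x\|^2\ \ge\ \ph(u)+\frac{1}{2r}\|u-x\|^2+\frac{1-r\rho}{2r}\|x'-u\|^2 .
\end{equation*}
Because $1-r\rho>0$, this identifies $u$ as the unique minimizer of $\ph(\cdot)+\frac{1}{2r}\|\cdot-x\|^2$ over $\{x'\in\B_\ve(\ox)\,|\,\ph(x')\le\ph(\ox)+\ve\}$. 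Applying the displayed inequality to two pairs $(u_1,v_1),(u_2,v_2)\in\gph T_\ve$ with $x_i:=u_i+rv_i$, using the test point $u_2$ in the first instance and $u_1$ in the second, and adding the two, the values of $\ph$ cancel and a routine rearrangement of the quadratic terms yields $\la u_1-u_2,\,x_1-x_2\ra\ge(1-r\rho)\|u_1-u_2\|^2$; Cauchy--Schwarz then delivers both single-valuedness of $(I+rT_\ve)^{-1}$ and the Lipschitz constant $(1-r\rho)^{-1}$.

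For (ii), prox-boundedness of $\ph$ together with Proposition~\ref{phpro}(a) allows me to invoke the known localization theory for prox-regular functions (e.g.\ \cite[Proposition~13.37]{rw}): there is a neighborhood $W$ of $\ox+r\ov$ on which $\prox_{r\ph}$ is nonempty, single-valued and Lipschitz, with $\prox_{r\ph}(\ox+r\ov)=\ox$. I then shrink $W$ to $U_r$ so that, for $x\in U_r$ and $u:=\prox_{r\ph}(x)$, one has $u\in\inte\B_\ve(\ox)$ and $\tfrac1r(x-u)\in\inte\B_\ve(\ov)$ (possible since $\prox_{r\ph}$ is continuous and $\prox_{r\ph}(\ox+r\ov)=\ox$), and moreover $\ph(u)\le\ph(\ox)+\ve$, the last point following from subdifferential continuity applied to the first-order optimality condition $\tfrac1r(x-u)\in\sub\ph(u)$. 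That same inclusion shows $x\in(I+rT_\ve)(u)$, so $\prox_{r\ph}(x)\in(I+rT_\ve)^{-1}(x)$ for every $x\in U_r$; conversely, if $u\in(I+rT_\ve)^{-1}(x)$ with $x\in U_r$ then $\tfrac1r(x-u)\in T_\ve(u)\subset\sub\ph(u)$ with $u\in\B_\ve(\ox)$, and the displayed inequality, combined with the fact just obtained that the global minimizer $\prox_{r\ph}(x)$ lies in $\{x'\in\B_\ve(\ox)\,|\,\ph(x')\le\ph(\ox)+\ve\}$, forces $u=\prox_{r\ph}(x)$. This proves \eqref{proj3} on $U_r$, with the single-valued/Lipschitz claims coming from (i). Finally, since $e_r\ph$ is finite near $\ox+r\ov$ (prox-boundedness together with nonemptiness of $\prox_{r\ph}$) and wherever $\prox_{r\ph}$ is a single-valued Lipschitz localization the Moreau envelope is of class $\mathcal{C}^1$ with $\nabla e_r\ph=\tfrac1r(I-\prox_{r\ph})$, the formula \eqref{mepr} follows; see \cite[Example~10.32]{rw} and \cite[Proposition~13.37]{rw}.

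I expect the main obstacle to be the bookkeeping in step (ii): upgrading the local minimality supplied by \eqref{prox} --- which only controls $\ph(\cdot)+\frac{1}{2r}\|\cdot-x\|^2$ over $\B_\ve(\ox)$ intersected with a sublevel set of $\ph$ --- to genuine global minimality, i.e.\ ruling out competing minimizers of this function far from $\ox$. This is precisely where prox-boundedness and the continuity of the Moreau envelope are needed, and it is cleanest to borrow the relevant conclusion from the established prox-regular calculus rather than redo it by hand. A secondary point of care is that $\ve$ and $\rho$ must be fixed once and for all from the prox-regularity of $\ph$ at $\ox$ for $\ov$, while only the neighborhood $U_r$ is permitted to shrink as $r\uparrow1/\rho$.
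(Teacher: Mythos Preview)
Your proposal is correct and follows the same overall strategy as the paper: reduce everything to the known theory of prox-regular, subdifferentially continuous functions via Proposition~\ref{phpro}(a). The only difference is in the level of detail. The paper's proof is two sentences: it records prox-regularity and subdifferential continuity of $\ph$ and then simply cites \cite[Theorem~4.4]{pr} for both \eqref{proj3} and \eqref{mepr}. You instead unpack that citation, deriving the quadratic growth inequality directly from \eqref{prox}, extracting the Lipschitz bound $(1-r\rho)^{-1}$ for $(I+rT_\ve)^{-1}$ by a monotonicity argument, and handling the local-versus-global minimizer issue by borrowing the single-valuedness of $\prox_{r\ph}$ from \cite[Proposition~13.37]{rw}. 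Your route is more self-contained and makes the role of each hypothesis (prox-boundedness for global existence, subdifferential continuity for the sublevel condition, prox-regularity for uniqueness and Lipschitzness) explicit; the paper's route is shorter but opaque unless the reader already knows \cite{pr}. Substantively they are the same argument.
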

\begin{proof} By Proposition~\ref{phpro}(a), the composite function $\ph$ is prox-regular and subdifferentially continuous at $\ox$ for $\ov$.
Both   results about $\prox_{r\ph}$ were established in \cite[Theorem~4.4]{pr}.   
\end{proof}

We proceed with a  characterization  of  continuous differentiability of  the proximal mapping  of the composite function $\ph$ in \eqref{CF}.

\begin{Theorem}\label{prox1}
 Assume that $\ph$ has the representation \eqref{CF} around $\ox\in \R^n$ and  $\ov\in \sub \ph(\ox)$, and  that the SOQC in \eqref{soqc} holds at $(\ox,\olm)$, where $\olm$ is the unique vector in $\Lm(\ox,\ov)$.
 Suppose further that $\ph$ is prox-bounded. Then the following properties are equivalent:
  \begin{itemize}[noitemsep,topsep=2pt]
\item [ \rm {(a)}]  there exists a positive constant   $\rho$ such that for any $r\in (0,1/\rho)$, the proximal mapping $\prox_{r\ph}$ is ${\cal C}^1$ in a neighborhood of $\ox+r\ov$;
\item [ \rm {(b)}]  there exists a positive constant   $\rho$ such that for any $r\in (0,1/\rho)$, the envelope function   $e_{r}\ph$ is ${\cal C}^2$ in a neighborhood of $\ox+r\ov$;
\item [ \rm {(c)}] $\ov\in \ri \sub \ph(\ox)$.
\end{itemize}
\end{Theorem}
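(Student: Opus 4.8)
The plan is to obtain the equivalence (a)$\Leftrightarrow$(b) directly from the gradient formula \eqref{mepr}, and to prove (c)$\Rightarrow$(a) and (a)$\Rightarrow$(c) by viewing the proximal equation $x\in w+r\sub\ph(w)$ as the canonical perturbation of a generalized equation of the form \eqref{GE} and invoking, in turn, Theorems~\ref{mrch}--\ref{mrsmr} and Theorem~\ref{prot}. Observe first that the SOQC \eqref{soqc} forces the basic constraint qualification \eqref{bcq} at $\ox$ via Proposition~\ref{soqcg}(a), so Lemma~\ref{proj1} applies; fix the constants $\ve,\rho_0>0$ it provides, so that for every $r\in(0,1/\rho_0)$ the map $\prox_{r\ph}$ is single-valued and Lipschitz on a neighborhood $U_r$ of $\ox+r\ov$, with $\prox_{r\ph}=(I+rT_\ve)^{-1}$ and $\nabla e_r\ph=\tfrac1r(I-\prox_{r\ph})$ on $U_r$.

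For (a)$\Leftrightarrow$(b): since $\prox_{r\ph}=I-r\nabla e_r\ph$ on $U_r$ and $I$ is ${\cal C}^\infty$, the map $\prox_{r\ph}$ is ${\cal C}^1$ near $\ox+r\ov$ exactly when $\nabla e_r\ph$ is, i.e. exactly when $e_r\ph$ is ${\cal C}^2$ near $\ox+r\ov$; replacing the constant $\rho$ by $\max\{\rho,\rho_0\}$ promotes this $r$-by-$r$ equivalence to the equivalence of (a) and (b). For (c)$\Rightarrow$(a), fix $r\in(0,1/\rho_0)$ and note that $r\ph=(rg)\circ\Phi$ again has the composite form \eqref{CF} with $rg$ polyhedral, that $r\ov\in\sub(r\ph)(\ox)$ with $r\olm$ the unique Lagrange multiplier at $(\ox,r\ov)$, and that the SOQC \eqref{soqc} holds there because scaling $g$ by $r>0$ leaves \eqref{soqc} unchanged. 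Consider the generalized equation $u\in x+\sub(r\ph)(x)$, which is \eqref{GE} with $f=I$, with $\ph$ replaced by $r\ph$, and with $\ou$ replaced by $\ox+r\ov$; here $\ox$ is a solution and it is nondegenerate by (c), since $\ri\sub(r\ph)(\ox)=r\,\ri\sub\ph(\ox)$. Its solution mapping coincides near $\ox+r\ov$ with $\prox_{r\ph}=(I+rT_\ve)^{-1}$, which by Lemma~\ref{proj1} is single-valued and Lipschitz there; hence $x\mapsto x+\sub(r\ph)(x)$ is metrically regular at $\ox$ for $\ox+r\ov$, so one of the equivalent conditions of Theorem~\ref{mrch} holds, and Theorem~\ref{mrsmr}(b) shows that this local solution mapping --- namely $\prox_{r\ph}$ --- is ${\cal C}^1$ near $\ox+r\ov$. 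Thus (a) holds with $\rho=\rho_0$.

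For (a)$\Rightarrow$(c), pick any $r\in(0,1/\rho)$ with $\rho$ as in (a), so $\prox_{r\ph}$ is ${\cal C}^1$ on a neighborhood $W$ of $\ox+r\ov$; then $\gph\prox_{r\ph}\cap(W\times\R^n)$ is a ${\cal C}^1$ manifold, so at each of its points the regular (Clarke) tangent cone and the paratingent cone both coincide with the tangent space. The linear isomorphism $L(w,v)=(w+rv,w)$ of $\R^n\times\R^n$ maps $\gph\sub\ph$, near $(\ox,\ov)$, onto $\gph\prox_{r\ph}$, near $L(\ox,\ov)=(\ox+r\ov,\ox)$, because $v\in\sub\ph(w)$ for $(w,v)$ near $(\ox,\ov)$ is equivalent to $\prox_{r\ph}(w+rv)=w$. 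Since regular tangent and paratingent cones are carried covariantly onto one another by a linear isomorphism, pulling back gives $\rt_{\gph\sub\ph}(x',v')=\widetilde{T}_{\gph\sub\ph}(x',v')$ for every $(x',v')\in\gph\sub\ph$ near $(\ox,\ov)$; that is, $\sub\ph$ is strictly proto-differentiable at $x'$ for $v'$ at all such points. This is precisely condition (a) of Theorem~\ref{prot}, so that theorem yields $\ov\in\ri\sub\ph(\ox)$, which is (c), and completes the proof.

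The step I expect to be the main obstacle is (a)$\Rightarrow$(c): one must exploit ${\cal C}^1$ smoothness of $\prox_{r\ph}$ on a full neighborhood rather than at the single point $\ox+r\ov$, so that the transferred identity $\rt_{\gph\sub\ph}=\widetilde{T}_{\gph\sub\ph}$ holds throughout a graphical neighborhood of $(\ox,\ov)$ and Theorem~\ref{prot} becomes applicable; and one must check carefully that the non-product linear change of variables $L$ does intertwine the two graphs locally and does respect the tangent-cone calculus.
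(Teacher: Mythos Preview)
Your proof is correct. The treatment of (a)$\Leftrightarrow$(b) via \eqref{mepr} and of (c)$\Rightarrow$(a) via Theorems~\ref{mrch}--\ref{mrsmr} is essentially the paper's argument; your choice to work with $r\ph=(rg)\circ\Phi$ directly, rather than with the localized mapping $T_\ve$ and Remark~\ref{ghpk}, is a cosmetic repackaging---the key observation in either case is that $(I+r\,\sub\ph)^{-1}$ and $(I+rT_\ve)^{-1}$ coincide as graph localizations around $(\ox+r\ov,\ox)$, so the Lipschitz single-valued localization furnished by Lemma~\ref{proj1} gives metric regularity of the relevant $G$ and hence ${\cal C}^1$ smoothness of $\prox_{r\ph}$.

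Your (a)$\Rightarrow$(c), however, genuinely differs from the paper's. You transfer \emph{strict} proto-differentiability from the ${\cal C}^1$ manifold $\gph\prox_{r\ph}$ back to $\gph\sub\ph$ through the linear isomorphism $L(w,v)=(w+rv,w)$, obtaining $\rt_{\gph\sub\ph}=\widetilde T_{\gph\sub\ph}$ on a full graphical neighborhood of $(\ox,\ov)$, and then invoke Theorem~\ref{prot}(a)$\Rightarrow$(b). The paper instead works at the single point: it uses that $T_{\gph h}(\ox+r\ov,\oy)$ is a linear subspace (since $h=\prox_{r\ph}$ is ${\cal C}^1$), transfers this via \eqref{gdpr} to conclude $T_{\gph\sub\ph}(\ox,\ov)$ is a linear subspace, and then observes that $\dom D(\sub\ph)(\ox,\ov)=K_\ph(\ox,\ov)=N_{\sub\ph(\ox)}(\ov)$ is therefore a subspace, whence $\ov\in\ri\sub\ph(\ox)$ by Proposition~\ref{phpro}(c). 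Your route is conceptually symmetric---Theorem~\ref{prot} is used in both directions---while the paper's is more elementary, needing only the ordinary tangent cone at one point and avoiding the paratingent/regular-tangent machinery. Both are valid; your flagged concern about the linear change of variables $L$ and the neighborhood-level transfer of tangent cones is handled correctly.
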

\begin{proof} Assume first that (c) holds and take positive constants $\ve$ and $\rho$ from Lemma~\ref{proj1}.  Fix $r\in (0,1/\rho)$ and 
consider  the generalized equation 
\begin{equation}\label{GE3}
\ou\in (I+rT_\ve)(x), \;\; x\in \R^n,
\end{equation}
where $\ou=\ox+r\ov$ and $T_\ve$ is defined by \eqref{logh}.  We know from  Lemma~\ref{proj1} that the solution set to \eqref{GE3}, namely $(I+rT_\ve)^{-1}(\ou)$, is single-valued  and is precisely $\prox_{r\ph}(\ou)$.
Moreover, we can find a neighborhood $U_r$ of $\ou$ such that for any $u\in U_r$ the solution mapping to the canonical perturbation of \eqref{GE3}, namely the  generalized equation
\begin{equation}\label{GE4}
u\in (I+rT_\ve)(x), \;\; x\in \R^n,
\end{equation}
is $\prox_{r\ph}(u)$. Also, $\prox_{r\ph}$ is single-valued and Lipschitz continuous on $U_r$. Thus, it follows from \eqref{proj3} and \cite[Proposition~3G.1]{DoR14}
that   $I+rT_\ve$ is metrically regular at $\ox$ for $\ou$.
We know from Theorem~\ref{prot} that $\sub \ph$ is strictly proto-differentiable at $\ox$ for $\ov$. This amounts to saying that $\rt_{\gph \sub \ph}(\ox,\ov)= \widetilde{T}_{\gph \sub \ph}(\ox,\ov)$.
By the definition of $T_\ve$ in \eqref{logh}, we obtain $\gph T_\ve=(\gph \sub \ph)\cap (\B_\ve(\ox)\times \B_\ve(\ov))$, meaning that 
 that the graphs of $T_\ve$ and $  \sub \ph$ coincide locally around $(\ox,\ov)$. This  implies that  $\rt_{\gph T_\ve}(\ox,\ov)= \widetilde{T}_{\gph T_\ve}(\ox,\ov)$, which 
means  that $T_\ve$ is strictly proto-differentiable at $\ox$ for $\ov$. This clearly shows that $rT_\ve$ is strictly proto-differentiable at $\ox$ for $r\ov$ and so is $I+rT_\ve$ at $\ox$ for $\ou$.
The latter results from the identity mapping being strict differentiable coupled with the sum for strict proto-differentiability, established in \cite[Proposition~5.3]{hjs}.  
As discussed in Remark~\ref{ghpk}, this allows us to obtain both conclusions (a) and (b) in Theorem~\ref{mrsmr} for the generalized equation in \eqref{GE3}. Thus, 
 $I+rT_\ve$ is metrically regular at $\ox$ for $\ou$ if and only if  the solution mapping to \eqref{GE4} 
has a Lipschitz continuous localization  around $\ou$ for $\ox$, which 
is  ${\cal C}^1$ in a neighborhood of $\ou$.  
Since we already showed that  $I+rT_\ve$ is metrically regular at $\ox$ for $\ou$, and since the solution mapping to \eqref{GE4} is $\prox_{r\ph}$ on $U_r$,
 we can conclude by shrinking $U_r$ if necessary that $\prox_{r\ph}$ is   ${\cal C}^1$ in a neighborhood of $\ou$. This completes the proof of the implication (c)$\implies$(a).

To justify the opposite implication, assume that (a) holds. Pick $r\in (0,1/\rho)$ and set  $h:=\prox_{r\ph}$. Since $h$ is ${\cal C}^1$ in a neighborhood of $\ox+r\ov$,  
we conclude from \cite[Proposition~3.1]{r85} that $T_{\gph h}(\ox+r\ov, \oy)$  with     $\oy:=h(\ox+r\ov)$ is a linear subspace. It follows from \eqref{proj3}  that 
\begin{equation}\label{gdpr}
Dh(\ox+r\ov)=\big(I+rD(\sub \ph)(\ox,\ov)\big)^{-1};
\end{equation}
see the proof of \cite[Exercise~12.64]{rw} for a similar result. 
This formula implies that 
$$
(w,q)\in   T_{\gph \sub \ph}(\ox,\ov)\iff (w+rq,w)\in T_{\gph h}(\ox+r\ov, \oy).
$$
Thus $T_{\gph \sub \ph}(\ox,\ov)$ is a linear subspace since $T_{\gph h}(\ox+r\ov, \oy)$  has this property. 
By $T_{\gph \sub \ph}(\ox,\ov)=\gph D(\sub \ph)(\ox,\ov)$, we conclude that $\dom D(\sub \ph)(\ox,\ov)$ is a linear subspace. 
We know from  \cite[Theorem~7.2]{mms0} that $\dom D(\sub \ph)(\ox,\ov)=K_{\ph}(\ox,\ov)$
and from \eqref{criph}  that $K_{\ph}(\ox,\ov)=N_{\sub \ph(\ox)}(\ov)$. This, combined with $K_{\ph}(\ox,\ov)$  being a linear subspace, tells us that $\ov\in \ri \sub \ph(\ox)$
and hence proves (c).
The equivalence of (a) and (b) results from the identity in \eqref{mepr}. This completes the proof.
\end{proof}

It is important to mention that a characterization of continuous differentiability of the proximal mapping of prox-regular functions, which encompass the composite function $\ph$ in \eqref{CF},
was achieved in \cite[Theorem~4.4]{pr2} under a rather restrictive assumption that $\ox$, in the notation of Theorem~\ref{prox1}, must be a global minimum of $\ph$. This forces us to just 
deal with the case that $\ov=0$.  Our new approach, which stems from the equivalence of metric regularity and strong metric regularity for generalized equations at their nondegenerate solutions, 
allows us to drop the latter restriction. Note also that the given characterization in \cite{pr2} does not go far enough to provide a characterization of  continuous differentiability of the proximal mapping
via the relative interior condition in Theorem~\ref{prox1}(c).

Continuous differentiability of the projection mapping to prox-regular sets in Hilbert spaces  was studied recently in \cite{cst} by extending the approach, pioneered  by Holmes in \cite{hol} 
for convex sets in Hilbert spaces. His main result, \cite[Theorem~2]{hol}, states 
that if $\Omega\subset \R^d$ is a closed convex set, $x\in \R^d$,  the boundary of  $\Omega$ is a ${\cal C}^2$ smooth manifold  around $y=P_\Omega(x)$, then the projection mapping $P_\Omega$ is ${\cal C}^1$ in a neighborhood of the open normal ray 
$\{y+t(x-y)|\; t>0\}$. It was shown in \cite[Theorem~2.4]{cst} that under a similar assumption with convexity replaced by prox-regularity, one can find $\rho>0$ such that  a similar result holds for 
 $P_\Omega$  in a neighborhood of the open normal ray $\{y+t(x-y)|\; t\in (0,1/\rho)\}$. Holmes's result in \cite{hol} and its extension in \cite{cst} require that the boundary of the set under consideration be 
 a ${\cal C}^2$ smooth manifold,   a condition which is not needed in our approach. Moreover, we were able to provide a characterization of continuous differentiability of the proximal mapping of functions, which goes beyond the projection mapping 
 onto sets. The price we paid is that our framework is narrower than \cite{cst} and that  we assume the space under consideration is finite dimensional. As we show in our subsequent paper, the main driving force behind finding a 
 characterization of continuous differentiability of prox-regular sets in a finite dimensional space is strict proto-differentiability, which is equivalent to the relative interior condition in Theorem~\ref{prox1}(c) when we are dealing with a set with the representation \eqref{CF2}.
 
{  We should also add here that Shapiro in \cite[Proposition~3.1]{sh16} characterized the differentiability of projection onto ${\cal C}^2$-reducible convex sets in the sense of \cite[Definition~3.135]{bs} 
 using a similar relative interior condition in Theorem~\ref{prox1}(c). Shapiro's result was generalized in \cite[Lemma~5.3.32]{mi} for ${\cal C}^2$-decomposable convex functions in the sense of \cite{sh03}.
 Note that while any function $\ph$  satisfying the assumptions in Theorem~\ref{prox1} is ${\cal C}^2$-decomposable (cf. \cite[Example~5.3.17]{mi} and \cite[Lemma~5.3.23]{mi}), 
   Theorem~\ref{prox1} can't be covered by \cite[Lemma~5.3.32]{mi}, since the latter result   can only be applied   to convex functions. Moreover, it doesn't go far enough to 
   characterize continuously differentiability as   Theorem~\ref{prox1}. Finally, we should point out that \cite[Theorem~28]{dhm} demonstrates that the proximal mapping of any ${\cal C}^2$-partly smooth function (cf.  
   of \cite[Definition~14]{dhm})  satisfying the relative interior condition in Theorem~\ref{prox1}(c) is ${\cal C}^1$. As pointed above, any   function $\ph$  satisfying the assumptions in Theorem~\ref{prox1} 
   is ${\cal C}^2$-decomposable. The latter class of functions was shown by Shapiro in \cite{sh03} to be ${\cal C}^2$-partly smooth  when the relative interior condition holds. Combining these 
   indicates that the implication (a)$\implies$(c) in  Theorem~\ref{prox1} can be   obtained from   \cite[Theorem~28]{dhm}. }

Next, we are going to present a characterization of continuous differentiability of the projection mapping to $\Th$, where $\Th$ is defined by \eqref{CF2}. 
In this case, it is not hard to see from  \eqref{code2}  that the SOQC in \eqref{soqc} boils down to 
\begin{equation}\label{simso}
\spann\big\{N_C(\Phi(\ox))\big\} \cap \ker \nabla \Phi(\ox)^*=\{0\}
\end{equation}
with the polyhedral convex set $C$ taken from \eqref{CF2}.

\begin{Corollary}\label{projsm}
Assume that $\Th$ is a subset of $\R^n$ and $\ox\in \Th$ with the representation \eqref{CF2} around $\ox$ and that 
$\ov\in N_\Th(\ox)$ and  the condition \eqref{simso} holds at $\ox$. Then the following conditions are equivalent:
\begin{itemize}[noitemsep,topsep=2pt]
\item [ \rm {(a)}]   there exists a positive constant   $\rho$ such that for any $r\in (0,1/\rho)$, the projection mapping $P_\Th$ is ${\cal C}^1$ in a neighborhood of $\ox+r\ov$;
\item [ \rm {(b)}]   there exists a positive constant   $\rho$ such that for any $r\in (0,1/\rho)$,  the  function $x\mapsto \dist(x, \Th)^2$ is  ${\cal C}^2$  in a neighborhood of $\ox+r\ov$;
\item [ \rm {(c)}]  $\ov\in \ri N_\Th(\ox)$.
\end{itemize}
\end{Corollary}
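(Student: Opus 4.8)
The plan is to recognize Corollary~\ref{projsm} as a direct specialization of Theorem~\ref{prox1} to the case $\ph=\dd_\Th$, so that essentially no new work is needed beyond bookkeeping. First I would observe that, by \eqref{CF2}, the function $\ph:=\dd_\Th$ admits the composite representation $\ph=g\circ\Phi$ around $\ox$ with $g=\dd_C$, which is a polyhedral function since $C$ is polyhedral convex; hence all the preceding results apply to $\ph$. Next I would verify the three hypotheses of Theorem~\ref{prox1}: (i) $\ph$ is prox-bounded, because $\dd_\Th+\al\|\cdot\|^2\ge 0$ for every $\al\ge 0$; (ii) $\ov\in\sub\ph(\ox)=N_\Th(\ox)$, which is part of the assumption; and (iii) the SOQC \eqref{soqc} holds at $(\ox,\olm)$, where $\olm$ is the unique element of $\Lm(\ox,\ov)$. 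For (iii), since $\sub g(z)=N_C(z)$ and $N_C(\Phi(\ox))$ is a convex cone containing the origin, one has $\para\{\sub g(\Phi(\ox))\}=\spann\{N_C(\Phi(\ox))\}$, so \eqref{code2} gives $D^*(\sub g)(\Phi(\ox),\olm)(0)=\spann\{N_C(\Phi(\ox))\}$ independently of $\olm$; thus the SOQC \eqref{soqc} is precisely the assumed condition \eqref{simso}, and uniqueness of $\olm$ in $\Lm(\ox,\ov)$ follows from Proposition~\ref{soqcg}.

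With the hypotheses in place, I would carry out the identifications that turn Theorem~\ref{prox1}(a)--(c) into Corollary~\ref{projsm}(a)--(c). First, for $\ph=\dd_\Th$ one has $\prox_{r\ph}=P_\Th$ for every $r>0$, so Theorem~\ref{prox1}(a) is exactly statement (a) here. Second, $e_r\ph(x)=\inf_{w\in\Th}\frac{1}{2r}\|w-x\|^2=\frac{1}{2r}\dist(x,\Th)^2$; since $\frac{1}{2r}>0$, the map $x\mapsto e_r\ph(x)$ is ${\cal C}^2$ near a point if and only if $x\mapsto\dist(x,\Th)^2$ is, so Theorem~\ref{prox1}(b) is exactly statement (b) here. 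Third, $\sub\ph(\ox)=N_\Th(\ox)$, so $\ov\in\ri\sub\ph(\ox)$ is exactly statement (c). Applying Theorem~\ref{prox1} to $\ph=\dd_\Th$ at $\ox$ for $\ov$ then yields the equivalence of (a), (b), and (c).

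I do not expect any genuine obstacle here: the argument is entirely a reduction, and the only point that needs a short justification is the passage from the SOQC \eqref{soqc} to \eqref{simso}, which is already handled by \eqref{code2} together with the elementary fact that the affine hull of the cone $N_C(\Phi(\ox))$ is its linear span. Everything else --- prox-boundedness of an indicator function, the identities $\prox_{r\dd_\Th}=P_\Th$ and $e_r\dd_\Th=\frac{1}{2r}\dist(\cdot,\Th)^2$, and $\sub\dd_\Th=N_\Th$ --- is routine.
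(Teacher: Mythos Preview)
Your proposal is correct and follows essentially the same approach as the paper: the paper's proof simply says the result follows from Theorem~\ref{prox1} by setting $\ph=\dd_\Th$ and noting that $\dd_\Th$ is prox-bounded. Your write-up supplies the routine identifications (SOQC $\Leftrightarrow$ \eqref{simso}, $\prox_{r\dd_\Th}=P_\Th$, $e_r\dd_\Th=\tfrac{1}{2r}\dist(\cdot,\Th)^2$, $\sub\dd_\Th=N_\Th$) that the paper leaves implicit, but the logical route is identical.
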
 

\begin{proof}  This results from Theorem~\ref{prox1} by setting $\ph=\dd_\Th$ therein. Note that $\dd_\Th$ is clearly prox-bounded, which is required in Theorem~\ref{prox1}. 
\end{proof}

An important example of the set $\Th$ with the representation \eqref{CF2} around $\ox$ that satisfies \eqref{simso} 
 is   ${\cal C}^2$-smooth manifolds. This happens 
when the polyhedral convex set $C$ in \eqref{CF2} is $\{0\}$ with $0\in \R^m$.  Observe from Example~\ref{point}(b) that   the condition \eqref{simso} 
amounts to $\nabla \Phi(\ox)$ having full rank when $\Th$ is a ${\cal C}^2$-smooth manifold. In this case, the relative interior condition  $\ov\in \ri N_\Th(\ox)$ automatically 
holds since we have $N_\Th(\ox)=\rge \nabla \Phi(\ox)^*$. By Corollary~\ref{projsm},  we can find a positive constant   $\rho$ such that for any $r\in (0,1/\rho)$, the projection mapping $P_\Th$
 is always ${\cal C}^1$ in a neighborhood of $\ox+r\ov$ whenever $\ov\in  N_\Th(\ox)$.  In particular, when $\ov=0$, we deduce from \eqref{gdph}
and Proposition~\ref{phpro}(b)  that  
 $$
 DN_\Th(\ox,\ov)= N_{K_\Th(\ox,\ov)}=N_{T_\Th(\ox)},
 $$
where the last equality results from $\ov=0$ and the definition of the critical cone of $\Th$. This, together with \eqref{gdpr}, tells us that 
$$
\nabla P_\Th(\ox)=DP_\Th(\ox)=(I+N_{T_\Th(\ox)})^{-1}=P_{T_\Th(\ox)}.
$$ 
This formula and continuous differentiability of $P_\Th$ for a ${\cal C}^2$ smooth manifold $\Th$ at any point $\ox\in \Th$ were observed  before in \cite[Lemma~2.1]{lm}, which is 
a special case of Corollary~\ref{projsm} for a ${\cal C}^2$ smooth manifold $\Th$ with $\ov=0$.
 
\section*{Acknowledgments}
The first author would like to thank Vietnam Institute for Advanced Study in Mathematics for hospitality during her post-doctoral fellowship of the Institute in 2021--2022.
The references \cite{bm, dhm, stp17, stp18} were brought to our attention by  two anonymous referees that is highly appreciated.


\end{document}